\newtheorem{thm}{Theorem}[section]
\newtheorem{prop}[thm]{Proposition}
\newtheorem{lem}[thm]{Lemma}
\theoremstyle{definition}
\newtheorem{defn}[thm]{Definition}
\newtheorem{ex}[thm]{Example}
\newtheorem{rmk}[thm]{Remark}
\newtheorem{conv}[thm]{Convention}
\newtheorem*{nota*}{Notation}
\newcommand{\C}{\mathbb C}
\newcommand{\Z}{\mathbb Z}
\newcommand{\A}{\mathbb A}
\newcommand{\Pp}{\mathbb P}
\newcommand{\sO}{\mathcal O}
\newcommand{\NE}{\operatorname{NE}}
\newcommand{\bM}{\overline{\mathcal M}}
\newcommand{\bF}{\overline{\mathcal F}}
\newcommand{\partn}{\mathcal \pi}
\newcommand{\ualpha}{{\underline\alpha}}
\newcommand{\uepsilon}{{\underline\varepsilon}}
\newcommand{\mS}{\mathfrak S}
\newcommand{\cD}{\mathcal D}
\newcommand{\HH}{\mathfrak H}
\newcommand{\fG}{{\mathfrak G}}
\newcommand{\bt}{\mathbf{t}}
\newcommand{\mt}{\mathbbm{t}}
\newcommand{\bq}{\mathbf{q}}
\newcommand{\mq}{\mathbbm{q}}
\newcommand{\bp}{\mathbf{p}}
\newcommand{\mmp}{\mathbbm{p}}
\newcommand{\smst}{\star_{\text{sm}}}
\newcommand{\g}{\mathrm{g}}
\newcommand{\ev}{\operatorname{ev}}
\newcommand{\HE}{\operatorname{HE}}
\newcommand{\rel}{\mathrm{rel}}
\newcommand{\vir}{\mathrm{vir}}
\newcommand{\Aut}{\mathrm{Aut}}
\newcommand{\GM}{\C^*}
\newcommand{\includegraphicsdpi}[3]{
    \pdfimageresolution=#1  
    \includegraphics[#2]{#3}
    \pdfimageresolution=72  
}
\title{Structures in genus-zero relative Gromov--Witten theory}
\author{Honglu Fan}
\email{honglu.fan@math.ethz.ch}
\author{Longting Wu}
\email{longting.wu@math.ethz.ch}
\author{Fenglong You}
\email{fenglong@ualberta.ca}
\begin{document}

\maketitle

\begin{abstract}
    In this paper, we define genus-zero relative Gromov--Witten invariants with negative contact orders. Using this, we construct relative quantum cohomology rings and Givental formalism. A version of Virasoro constraints also follows from it.
\end{abstract}
\tableofcontents

\section{Introduction}

\subsection{Overview}
In enumerative geometry, a modern breakthrough by Kontsevich \cite{Kon} in the 1990's showed us that the structures (quantum rings) behind curve counting problems could be the keys to solving these problems. 
With the development of Gromov--Witten theory, a lot more structural properties were discovered and generalized (Givental's quantization formalism \cite{Giv1}, Givental--Teleman's classification \cites{Giv5, Giv6, Tel}, etc.).

On the other hand, it is also natural to impose tangency conditions along a hypersurface in a counting problem. Along this idea, foundations of relative Gromov--Witten invariants were made in \cites{LR, IP2, EGH} and enjoyed further development in symplectic geometry. Later, relative Gromov--Witten invariants were also defined and studied in algebraic geometry (for example, \cites{Jun1, Jun2}, among others). Despite many years of development, parallel structures like quantum rings are still lacking in relative Gromov--Witten theory. In this paper, we propose to enlarge relative Gromov--Witten theory by allowing negative contact orders. Using this, we build structures like quantum rings and Givental formalism on relative Gromov--Witten theory.

There is a heuristic view of negative contact points. For simplicity, we assume that the target is a curve $X$. Let $D$ be the divisor corresponding to a point. Suppose that $D$ is locally defined by the equation $x=0$. Let $f:C\rightarrow X$ be a ramified cover and $p\in C$ be a ramification point over $D$. Locally at $p$, $f$ can be written as $x=z^k$ with $k>0$. ``Negative contact order" can not happen without degenerating $X$. The degeneration to the normal cone of $X$ at $D$ is locally defined by $xy=t$ where $t$ parametrizes this degeneration. At $t=0$ the curve $X$ degenerates to an $X$ glued with a $\Pp^1$ at the point $D$. Since $y$ is a local coordinate of this $\Pp^1$, a ramified cover over the $\Pp^1$ is locally described by $y=z^k$. Since $xy=t$, the local expression can be rewritten as $x=z^{-k}t$. Heuristically, a ramification point over the $\Pp^1$ at $D$ is a ``negative contact point" under the local coordinate of $X$.

It is worth noting that our version of genus-zero relative Gromov--Witten theory with negative contact orders can be completely constructed out of the original relative Gromov--Witten theory plus the Gromov--Witten theory with rubber targets (rubber theory for short). Therefore, our definition does not introduce new constructions of moduli spaces other than moduli spaces of relative stable maps. Furthermore, Remark \ref{rmk:rubcycle} suggests that in genus zero, Gromov--Witten invariants of hypersurface $D$ already completely determines rubber invariants (plus some relations about psi-classes. See \cite{MP}).

We would like to point out that the degree-zero part of the relative quantum ring of a log Calabi--Yau pair is also related to the construction of mirrors in \cites{GS, GS2} (with perhaps more discussions in a forthcoming paper by Gross, Pomerleano and Siebert \cites{GPS}). 

Following a discussion with Gross, it appears that the invariants we define should be closely related to the genus-zero punctured Gromov--Witten invariants in \cite{ACGS} (with a more complete paper coming up soon) when the boundary is a smooth divisor.

We would also like to point out that, according to \cite{GS}, relative quantum cohomology should give an algebro-geometric version of $SH^0(X\backslash D)$ (the degree-zero part of symplectic cohomology ring). In \cites{GP16, GP18},  Ganatra-Pomerleano construct a logarithmic cohomology ring $H^*_{\log}(X,\textbf{D})$ for the pair $(X,\textbf{D})$ where $\textbf{D}$ is a normal crossing divisor. They further show that $H^*_{\log}(X,\textbf{D})$ is isomorphic to $SH^*(X\backslash \textbf{D})$ under certain conditions. In the special case when $\textbf{D}=D$ is smooth, we find that only the degree-zero part of our relative quantum ring matches with that of $H^*_{\log}(X,D)$.

The keys to the structures (quantum ring and Givental formalism) are the right forms of the topological recursion relation (TRR) and the Witten--Dijkgraaf--Verlinde--Verlinde (WDVV) equation for relative Gromov--Witten invariants (see Propositions \ref{prop:TRR} ands \ref{prop:WDVV}).
Motivated by the simple relations between relative invariants and orbifold invariants of root stacks in \cites{ACW, TY}, it is natural to ask whether we are able to take TRR and WDVV from orbifold Gromov--Witten theory and pass them over to the relative theory.
Unfortunately, results of \cites{ACW, TY} are insufficient to convert TRR and WDVV into relative Gromov--Witten theory, because orbifold stable maps with large ages ($(r-k)/r$ for a fixed $k$ and a sufficiently large $r$) are crucial ingredients which are not sufficiently studied in previous works.
In this paper, those large-age markings are studied and translated into markings with negative contact orders.
We remark that a very special case of orbifold invariants with large age markings was studied in \cite{CC}*{Section 5} in details.

This paper in fact provides two equivalent definitions of relative Gromov--Witten theory with negative contact. The first definition follows from the aforementioned idea from the orbifold theory. Such a definition relies on the independence of $r$ of the orbifold theory when $r$ is sufficiently large (Theorem \ref{thm:limitexist}). The second definition is an explicit description by gluing moduli spaces of relative stable maps using fiber products. Each definition has its own merit. The first definition implies the structural properties (TRR, WDVV, etc.) directly. The second definition allows us to carry out explicit calculations and provides us with geometric insights.

There are some difficulties in higher genus. The counterexample of Maulik in \cite{ACW}*{Section 1.7} shows that the equality between genus-zero relative and orbifold invariants does not hold in higher genus. It suggests that our definition of genus-zero relative invariants with negative contact orders can not be applied to higher genus literally. However, the high-genus result in \cite{TY} suggests that we may similarly consider a suitable coefficient when $r$ tends to $\infty$. We refer to \cite{FWY2} for more details on the structures of higher genus relative Gromov--Witten theory.

\subsection{A summary of the paper}
Let $X$ be a smooth projective variety and $D$ be a smooth divisor. This paper can be outlined as follows.

\begin{enumerate}
    \item Fix a topological type $\Gamma=(g=0,n,\beta,\rho,\vec{\mu})$ of stable maps where the partition (with possibly negative entries) $\vec{\mu}=(\mu_1,\dotsc,\mu_\rho)\in (\Z^*)^\rho$ is the intersection profile with hypersurface $D$. We present two definitions of relative Gromov--Witten cycle with negative contact orders. We first define the relative Gromov--Witten cycle as the limit of the orbifold Gromov--Witten cycle $\mathfrak c_\Gamma(X/D)\in A_*(\bM_{0,n+\rho}(X,\beta)\times_{X^{\rho}} D^{\rho})$ (Definition \ref{def:cycle}). Relative Gromov--Witten invariants are defined as integrations against the Gromov--Witten cycles (Definition \ref{rel-inv-neg1}).
    
    \item We then define a relative Gromov--Witten cycle in the second way. The definition uses moduli spaces of relative stable maps and is in terms of graph sums (Definition \ref{def-rel-cycle}). Similarly, relative Gromov--Witten invariants are defined as integrations (Definition \ref{rel-inv-neg}). Some important examples are also presented in Examples \ref{ex:1neg} and \ref{ex:2neg}.
    
    \item We prove that the orbifold definition and the graph sum definition coincide (Section \ref{sec:loc}). The basic idea of the proof follows from the idea in \cites{TY18, TY}. The critical and technical part of our proof is an identity between Hurwitz--Hodge cycles and rubber cycles which is proved in the Appendix.
    
    \item We define the ring of insertions $\mathfrak H$ in Section \ref{sec:ins}. By doing so, tangency conditions are now part of the information of insertions. Thus, similar to absolute Gromov--Witten invariants, the relative invariants can be understood as a multilinear function over a suitable ring of insertions  (Definition \ref{defn:inv}).
    
    \item Now, it can be shown that the relative invariants satisfy suitable forms of TRR and WDVV (Propositions \ref{prop:TRR} and \ref{prop:WDVV}). Thus, quantum rings and Givental formalism can be defined (Sections \ref{sec:quan} and \ref{sec:Givental}). A version of Virasoro constraints is stated in Section \ref{sec:Virasoro}.
    
\end{enumerate}

Let us elaborate a little bit. Let $X_{D,r}$ be the $r$-th root stack of $X$ along the divisor $D$ for a sufficiently large integer $r$. Our first definition of relative Gromov--Witten cycles is a pushforward from the moduli of orbifold stable maps $\bM_\Gamma(X_{D,r})$ multiplied by $r^{\rho_-}$, where $\rho_-$ is the number of orbifold markings with large ages.
Our second definition of relative Gromov--Witten cycle is a pushforward from a fiber product of $\bM_{*}(X,D)$ (moduli of relative stable maps) and $\bM^\sim_{*}(D)$ (moduli of relative stable maps with rubber targets). The cycle we push forward is the natural virtual cycle of this fiber product intersecting a certain ``obstruction class" $C_{\fG}$ (see \eqref{eqn:cg}). The gluing of those moduli spaces is described in terms of a certain type of bipartite graphs. First-time readers may skip the tedious graph notation and keep only this general idea in mind. In particular, when there is only one negative contact point, the ``obstruction class" is trivial. In this case, the construction is very simple as pictured in Example \ref{ex:1neg}.

In Section \ref{sec:loc}, we show that our two definitions in Section \ref{sec:loc} are equivalent. In other words, relative invariants with negative contact orders are exactly the corresponding orbifold invariants with large ages multiplied by $r^{\rho_-}$. More precisely,
\begin{thm}[= Theorem  \ref{thm:compare}]
Fix a topological type $\Gamma=(0,n,\beta,\rho,\vec{\mu})$. For $r\gg 1$, we have the following relation for cycle classes
\begin{align}\label{equ:compare}
\mathop{\mathrm{lim}}_{r\rightarrow \infty} r^{\rho_-}\tau_*([\bM_\Gamma(X_{D,r})]^{\vir}) = \sum\limits_{\fG \in \mathcal B_\Gamma} \dfrac{1}{|\Aut(\fG)|}(\mathfrak t_{\fG})_* ({\iota}^* C_{\fG} \cap [\bM_{\fG}]^{\vir})
\end{align}
where $\bM_\Gamma(X_{D,r})$ is the moduli space of orbifold stable maps of topological type under Convention \ref{conv:Gamma}; $\rho_-$ is the number of relative markings with negative contact orders and the right-hand side of equation \eqref{equ:compare} is the virtual cycle defined via graph sums given in Section \ref{sec:rel-cycle}. In particular, the cycle classes $r^{\rho_-}\tau_*([\bM_\Gamma(X_{D,r})]^{\vir})$ are independent of $r$ when $r$ is sufficiently large. 
\end{thm}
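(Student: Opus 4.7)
The plan is to establish \eqref{equ:compare} via a degeneration-plus-localization argument in the spirit of \cite{TY18, TY}. First, I would degenerate $X$ to the normal cone of $D$, producing a central fiber $X \cup_D Y$ where $Y = \Pp(N_{D/X}\oplus \sO_D)$, and carry this through on the root stack side to get a central fiber $X \cup_D Y_r$, with $Y_r$ the $r$-th root stack of $Y$ along its infinity divisor $D_\infty$. Since the root construction is supported near $D$, no orbifold structure remains on the $X$-side. Applying the orbifold degeneration formula to $[\bM_\Gamma(X_{D,r})]^{\vir}$ then writes it as a graph sum with each vertex decorated as either an $X$-side relative stable map to $(X,D)$ or a $Y_r$-side orbifold stable map relative to $D_\infty$, glued along matching contact profiles at $D$.

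Next, I would apply virtual localization with respect to the natural fiberwise $\GM$-action on $Y_r$ (fixing $D_0$ and $D_\infty$). The $Y_r$-contribution reduces to Hurwitz--Hodge integrals over $D$ coupled to rubber moduli $\bM^\sim_*(D)$ attached along $D_\infty$. The large-age markings are forced onto the $Y_r$-side: a marking with negative contact order $\mu_i$ corresponds to age $\simeq 1 + \mu_i/r$, contributing a factor scaling like $1/r$ from the Hurwitz--Hodge bundle. This is exactly what the overall $r^{\rho_-}$ prefactor absorbs, producing a finite $r\to\infty$ limit. Organizing the resulting splittings of markings between positive-contact and negative-contact groups reproduces the indexing set $\mathcal B_\Gamma$ of bipartite graphs, together with the automorphism weights $|\Aut(\fG)|^{-1}$.

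The main obstacle — and the genuinely new ingredient — is identifying the large-$r$ limit of the Hurwitz--Hodge factors at the large-age markings with the obstruction class $C_{\fG}$ capped against the rubber cycle. This is the Hurwitz--Hodge/rubber identity deferred to the Appendix, and is precisely what is needed to convert the localization output on $Y_r$ into the explicit graph-sum definition on the right-hand side of \eqref{equ:compare}. Once this identity is in hand, matching the gluing data at $D$ in the degeneration formula with the fiber-product gluing underlying the definition of $\bM_\fG$ and $\mathfrak t_\fG$ is a combinatorial check, and the equality \eqref{equ:compare} follows. Since the right-hand side is manifestly $r$-independent, the last assertion of the theorem (independence of $r$ for $r\gg 1$) comes for free.
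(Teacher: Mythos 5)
Your proposal follows essentially the same route as the paper: degeneration of $X_{D,r}$ to the normal cone of the rooted divisor (central fiber $P_{D_0,r}\cup_D X$), fiberwise $\C^*$-localization on the relative local model $(P_{D_0,r},\cD_\infty)$, and the Hurwitz--Hodge/rubber identity of the Appendix to convert the large-age vertex contributions into the classes $C_{\fG}$ capped against rubber cycles, with the $r$-independence then read off from the $r$-independent graph sum. One small correction: the root construction is taken along the section of the $\Pp^1$-bundle that is \emph{not} glued to $X$ (the paper's $D_0$), while the gluing/relative divisor is the other section $\cD_\infty\cong D$; your write-up roots $Y$ along $D_\infty$ and simultaneously takes the relative structure along $D_\infty$, which is internally inconsistent, and the identification of the $\infty$-side localization residue with $C_{\Gamma^\infty}(t)=t/(t+\Psi)$ uses the linear equivalence $\Psi\sim\delta$ rather than being a purely combinatorial matching.
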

The proof is motivated by the recent work of Tseng and the third author in \cite{TY}. Following \cite{TY}, we apply the degeneration formula (\cites{LR, Jun2}) to orbifold invariants with large ages and then apply virtual localization to the relative local model. The degeneration formula and the localization computation in Section \ref{sec:loc}, together with a key lemma in the Appendix for genus-zero Hurwitz-Hodge cycles, are finally combined to conclude the theorem. Note that the argument in this paper is on the cycle level. One can also prove a cycle-level version of \cite{TY} which in fact simplifies some of the arguments in \cite{TY}.

The idea of enlarging the ring of insertions is in fact very simple. Originally, there are two types of insertions in relative Gromov--Witten theory: the interior markings (without touching the boundary divisor $D$) and the relative markings (touching boundary divisor $D$). Insertions on these two types of markings should come from $H^*(X)$ and $H^*(D)$, respectively. Consider the direct sum 
\begin{equation}\label{eqn:vecsp}
\HH=H^*(X)\oplus\bigoplus\limits_{i\in \Z^*}H^*(D).
\end{equation}
The $H^*(X)$ piece is graded by $0$, and the other $H^*(D)$ are graded by nonzero integers. This grading stands for the contact orders at the corresponding markings, and the relative Gromov--Witten invariant is simply a multi-linear function on this enlarged vector space $\HH$.

The pairing on $\HH$ is defined as the integration of the cup product, but additionally, the contact orders need to add up to zero \eqref{eqn:pairing}. This is in fact motivated by the pairing in orbifold Gromov--Witten theory. The ring structure on $\HH$ can also be described based on a direct calculation of three-point degree-zero invariants. Interested readers may look at Section \ref{sec:ins} directly. Now a WDVV and a TRR equations can be stated and quantum rings and Givental formalism directly follow. But a subtle problem is that $\HH$ is an infinite-dimensional space. Although most of the structures (the pairing and the ring structure) on $\HH$ behave like a limit of the structures on finite-dimensional spaces, being infinite dimensional eventually becomes a problem in defining Virasoro operators (see Remark \ref{rmk:badnews}). But a version of Virasoro operators can still be stated and proven in genus zero.

\subsection{Acknowledgment}
We thank Hsian-Hua Tseng for discussions and collaborations with F. Y. on a previous project whose methods are crucial in this paper. We thank Rahul Pandharipande for discussions and suggestions. We also thank Mark Gross, Qile Chen and the anonymous referees for valuable comments on our draft. H. F. is supported by grant ERC-2012-AdG-320368-MCSK and SwissMAP. L. W. is supported by grant ERC-2017-AdG-786580-MACI. F. Y. is supported by the postdoctoral fellowship of NSERC and Department of Mathematical Sciences
at the University of Alberta. 

This project has received funding from the European Research Council (ERC) under the European Union’s
Horizon 2020 research and innovation program (grant agreement No. 786580).

\section{Relative Gromov--Witten theory}\label{sec:rel}
In this section, we would like to briefly recall relative Gromov--Witten invariants and rubber invariants. Let $X$ be a smooth projective variety and $D$ a smooth divisor. In the whole paper, the intersection number of a curve class $\beta$ with a divisor $D$ is denoted by $\int_\beta D$. 

\subsection{General theory}

Define a \emph{topological type} $\Gamma$ to be a tuple $(g,n,\beta,\rho,\vec{\mu})$ where $g,n$ are nonnegative integers, $\beta\in H_2(X,\Z)$ is a curve class and $\vec{\mu}=(\mu_1,\dotsc,\mu_\rho)\in \Z^\rho$ is a partition of the number $\int_\beta D$. Furthermore, we must have
\begin{equation}\label{eqn:positivecontact}
    \mu_i>0 \text{ for } 1\leq i\leq \rho.
\end{equation}
There is the moduli of relative stable maps $\bM_\Gamma(X,D)$ and the stabilization map $\mathfrak s:\bM_\Gamma(X,D)\rightarrow \bM_{g,n+\rho}(X,\beta)$. Write $\bar\psi_i=\mathfrak s^*\psi_i$. For $1\leq i\leq n$, it is easy to see that $\bar\psi_i$ coincides with the usual psi-class of $\bM_\Gamma(X,D)$, while for $n+1\leq i\leq n+\rho$, they are different.

There are evaluation maps
\begin{align*}
\ev_X=(\ev_{X,1},\ldots,\ev_{X,n}):&\bM_\Gamma(X,D)\rightarrow X^n, \\
\ev_D=(\ev_{D,1},\ldots,\ev_{D,\rho}):&\bM_\Gamma(X,D)\rightarrow D^\rho.
\end{align*}
The insertions of relative invariants are the following classes.
\[
\ualpha\in (\C[\bar\psi]\otimes H^*(X))^{n}, \quad \uepsilon\in (\C[\bar\psi]\otimes H^*(D))^{\rho}.
\]
For simplicity, we assume
\[
\ualpha=(\bar\psi^{a_1}\alpha_1,\ldots,\bar\psi^{a_n}\alpha_n), \quad \uepsilon=(\bar\psi^{b_1}\varepsilon_1,\ldots,\bar\psi^{b_\rho}\varepsilon_\rho).
\]

\begin{defn}\label{def:rel-inv}
The \emph{relative Gromov--Witten invariant with topological type $\Gamma$} is defined to be
\[
\langle \uepsilon \mid \ualpha \rangle_{\Gamma}^{(X,D)}=\displaystyle\int_{[\bM_{\Gamma}(X,D)]^{\vir}} \ev_D^*\uepsilon \cup \ev_X^*\ualpha,  
\]
where
\begin{equation}\label{eqn:ev}
\ev_D^*\uepsilon=\prod\limits_{j=1}^\rho \bar\psi^{b_j}_{D,j}\ev_{D,j}^*\varepsilon_j, \quad \ev_X^*\ualpha=\prod\limits_{i=1}^n \bar\psi^{a_i}_{X,i}\ev_{X,i}^*\alpha_i,
\end{equation}
with $\bar\psi_{D,j}, \bar\psi_{X,i}$ the psi-classes of the corresponding markings.
\end{defn}

We also allow disconnected domains. Let $\Gamma=\{\Gamma^\partn\}$ be a set of topological types, the relative invariant with disconnected domain curves is defined by the product rule:
\[
\langle \uepsilon\mid \ualpha\rangle_{\Gamma}^{\bullet(X,D)} = \prod\limits_{\partn} \langle \uepsilon^{\partn}\mid\ualpha^{\partn} \rangle_{\Gamma^{\partn}}^{(X,D)}.
\]
Here $\bullet$ means possibly disconnected domain and $\{\uepsilon^{\partn}\}$, $\{\ualpha^{\partn}\}$ are distributions of $\uepsilon$, $\ualpha$ according to $\Gamma^{\partn}$, respectively. We call this $\Gamma$ a \emph{disconnected topological type}. 

Also recall the definition of an {\emph {admissible graph}}.

\begin{defn}[Definition 4.6, \cite{Jun1}]
{\emph {An admissible graph}} $\Gamma$ is a graph without edges plus the following data.
\begin{enumerate}
    \item An ordered collection of legs.
    \item An ordered collection of weighted roots.
    \item A function $\g:V(\Gamma)\rightarrow \Z_{\geq 0}$.
    \item A function $b:V(\Gamma)\rightarrow H_2(X,\Z)$.
\end{enumerate}
\end{defn}
Here, we use $V(\Gamma)$ to mean the set of vertices of $\Gamma$. There is a slight difference between \cite{Jun1}*{Definition 4.6} and our definition. The function $b$ in \cite{Jun1}*{Definition 4.6} lands on $A_2(X)/\sim_{alg}$. Here we replace it by $H_2(X,\Z)$ in order to unify the notation for curve classes.

A relative stable morphism is associated with an admissible graph in the following way. Vertices in $V(\Gamma)$ correspond to the connected components of the domain curve. Roots and legs correspond to the relative markings and the interior markings, respectively. Weights on roots correspond to contact orders at the corresponding relative markings. 
The functions $\g,b$ assign a component to its genus and curve class, respectively. We do not spell out the formal definitions in order to avoid heavy notation, but we refer the readers to \cite{Jun1}*{Definition 4.7}. 
\begin{rmk}
A (disconnected) topological type and an admissible graph are equivalent concepts. 
We use the notion of admissible graphs merely due to the need of graph operations like gluing of graphs.
\end{rmk}

We use admissible graphs and topological type interchangeably in this paper. 
For the moduli space $\bM^\bullet_{\Gamma}(X,D)$ to be nonempty, we need the following extra condition on $\Gamma$ (recall $\mu_i$ are weights of roots indicating contact orders at $D$).
\begin{equation}\label{eqn:cond}
\sum\limits_{i=1}^\rho \mu_i=\int_\beta D, \qquad \mu_1,\ldots,\mu_\rho>0.
\end{equation}

\subsection{Rubber invariants}\label{sec:rub}
Given a smooth projective variety $D$ and a line bundle $L$ on $D$, we denote the \emph{moduli of relative stable maps to rubber targets} by $\bM^{\bullet\sim}_{\Gamma'}(D)$. Here $\bullet$ means possibly disconnected domain, and $\sim$ means the rubber target. The discrete data $\Gamma'$ describing the topology of relative stable maps is defined as a slight variation of the admissible graph.

\begin{defn}
A rubber graph $\Gamma'$ is an admissible graph whose roots have two different types. There are
\begin{enumerate}
    \item $0$-roots (whose weights will be denoted by $\mu^0_1,\ldots,\mu^0_{\rho_0}$), and
    \item $\infty$-roots (whose weights will be denoted by $\mu^\infty_1,\ldots,\mu^\infty_{\rho_\infty}$).
\end{enumerate}
Furthermore, the curve class assignment $b$ maps $V(\Gamma)$ to $H_2(D,\Z)$.
\end{defn}

As to the moduli space of relative stable maps to a rubber (nonrigid) target, a description can be found in \cite{GV}*{Section 2.4}. After all, a relative stable map to a rubber target of $D$ is a relative pre-stable map to a chain of $\Pp_D(L\oplus \sO)$ glued along certain invariant sections. We denote the invariant divisors at two ends of the chain by $D_0, D_\infty$. We make the convention that the normal bundles of $D_0$ and $D_\infty$ are $L$ and $L^\vee$, respectively. 

To get a non-empty moduli space, we need the following condition:
\begin{equation}\label{eqn:rubberint}
    \sum\limits_{i=1}^{\rho_0} \mu^0_i - \sum\limits_{j=1}^{\rho_\infty} \mu^\infty_j = \int_\beta c_1(L),
\end{equation}
where $\beta$ is the curve class of $\Gamma'$. If $\Gamma'$ has more than one vertex, the above is satisfied on each vertex (with $\mu^0_i,\mu^\infty_j$ corresponding to weights of roots on a given vertex).

In the rest of the paper, it is very often that a log pair $(X,D)$ is given in the context. In this case, we always assume that
\[
L=N_{D/X}.
\]
We may also refer to $N_{D/X}$ by $\sO_X(D)|_{D}$ or simply $\sO(D)$.

There is a standard way of associating a relative stable map into a rubber target with a rubber graph, where relative markings at $D_0, D_\infty$ correspond to $0$-roots and $\infty$-roots, respectively. 

We also have evaluation maps 
\[
\ev_D:\bM^{\bullet\sim}_{\Gamma'}(D)\rightarrow D^n,\quad  \ev_{D_0}:\bM^{\bullet\sim}_{\Gamma'}(D)\rightarrow D^{\rho_0},\quad \ev_{D_\infty}:\bM^{\bullet\sim}_{\Gamma'}(D)\rightarrow D^{\rho_\infty}.
\]
Given insertions 
\[\ualpha\in (\C[\bar\psi]\otimes H^*(X))^{n}, \quad \uepsilon_0\in (\C[\bar\psi]\otimes H^*(D))^{\rho_0}, \quad \uepsilon_\infty\in (\C[\bar\psi]\otimes H^*(D))^{\rho_\infty},
\]
rubber invariants are defined as follows.
\[
\langle \uepsilon_0\mid\ualpha\mid\uepsilon_\infty  \rangle^{\bullet\sim}_{\Gamma'}:=\displaystyle\int_{[\bM^{\bullet\sim}_{\Gamma'}(D)]^{\vir}} \ev_D^*\ualpha \cup \ev_{D_0}^*\uepsilon_0 \cup \ev_{D_\infty}^*\uepsilon_\infty.
\]
Similar to \eqref{eqn:ev}, we use the convention that the symbol $\bar\psi$ is changed into the corresponding $\bar\psi_i$ under $\ev_D^*, \ev_{D_0}^*, \ev_{D_\infty}^*$.

\section{Relative theory as a limit of orbifold theory}
In this section, we use orbifold Gromov--Witten theory to make our first definition of relative Gromov--Witten theory with negative contact orders. The definition in this section relies on Theorem \ref{thm:limitexist} which is proven in Section \ref{sec:loc}.

\subsection{Relative Gromov--Witten cycle with negative contact points}
Consider the $r$-th root stack $X_{D,r}$ of $X$ along the divisor $D$. Write the coarse moduli space of the inertia stack of $X_{D,r}$ as $\underline{I}(X_{D,r})$. It has $r$ components:
\[
\underline{I}(X_{D,r})  \cong X\sqcup D\sqcup D\cdots\sqcup D.
\]
The twisted sectors isomorphic to $D$ are labeled by the ages $k_i/r$, where $k_i\in\{1,2,\ldots,r-1\}$.

Since we use orbifold theory to generalize relative theory, we would like to match some of their notation. Let $\Gamma=(g,n,\beta,\rho,\vec{\mu})$ be a topological type with $\vec{\mu}=(\mu_1,\dotsc,\mu_\rho)\in (\Z^*)^\rho$ a partition of the number $\int_\beta D$. A topological type $\Gamma$ can also be used to specify topological types of orbifold stable maps to $X_{D,r}$ via the following convention. 
\begin{conv}\label{conv:Gamma}
A topological type $\Gamma=(g,n,\beta,\rho,\vec{\mu})$ of orbifold stable maps contains the following data:
\begin{itemize}
    \item $g,\beta$ correspond to the genus and curve class.
    \item $n$ indicates a set of $n$ markings without orbifold structure.
    \item $\rho$ indicates a set of $\rho$ markings with orbifold structure.
    \item $\vec{\mu}=(\mu_1,\dotsc,\mu_\rho)\in (\Z^*)^\rho$ and $\sum_{i=1}^{\rho}\mu_i=\int_\beta D$.
    \item When $\mu_i>0$, the evaluation map of the corresponding marking lands on the twisted sector with age $\mu_i/r$.
    \item when $\mu_i<0$, the evaluation map of the corresponding marking lands on the twisted sector with age $(r+\mu_i)/r$.
\end{itemize}
Here, we require that $r>\max_{1\leq i\leq \rho}|\mu_i|$.
\end{conv}

There are evaluation maps landing on the coarse moduli $\underline{I}(X_{D,r})$. Since ages are fixed by $\vec{\mu}$, we further restrict their targets to the corresponding components. We have the restricted evaluation maps
\[
\ev_X:\bM_\Gamma(X_{D,r})\rightarrow X^n, \quad \ev_D:\bM_\Gamma(X_{D,r})\rightarrow D^\rho
\]
corresponding to those $n$ markings without orbifold structures, and those $\rho$ markings with orbifold structures, respectively. We similarly denote their entries by
\[
\ev_X = (\ev_{X,1},\ldots,\ev_{X,n}), \quad \ev_D = (\ev_{D,1},\ldots,\ev_{D,\rho}).
\]
Consider the forgetful map
\[
\tau: \bM_\Gamma(X_{D,r})\rightarrow \bM_{0,n+\rho}(X,\beta)\times_{X^{\rho}} D^{\rho},
\]
we write $\bar{\psi_i}=\tau^*\psi_i$.
Using the notation in Definition \ref{def:rel-inv}, the \emph{orbifold Gromov--Witten invariant with topological type $\Gamma$} is
\begin{equation}\label{orb-inv}
    \langle \uepsilon, \ualpha \rangle_{\Gamma}^{X_{D,r}}=\displaystyle\int_{[\bM_{\Gamma}(X_{D,r})]^{\vir}} \prod\limits_{j=1}^\rho \bar\psi_{D,j}^{b_j}\ev_{D,j}^*\varepsilon_j\prod\limits_{i=1}^n \bar\psi_{X,i}^{a_i}\ev_{X,i}^*\alpha_i,
\end{equation}
where $\bar\psi_{D,j}, \bar\psi_{X,i}$ are psi-classes corresponding to markings evaluated under $\ev_D, \ev_X$.

For orbifold Gromov--Witten invariants (\ref{orb-inv}) with topological type $\Gamma$, we define an integer $\rho_-\in \mathbb Z_{\geq 0}$ to be
\[
\rho_-=\sum_{\mu_i>0}\mu_i/r+\sum_{\mu_i<0}(r+\mu_i)/r - \left( \int_\beta D \right)/r=\#\{\mu_i<0\}.
\]

Before making the definition, we need to state a fact on which our definition relies. Under the map $\tau$, the $\rho$ relative markings become ordinary markings in $\bM_{0,n+\rho}(X,\beta)$. Let $\bM_{0,n+\rho}(X,\beta)\times_{X^{\rho}} D^{\rho}$ be the fiber product of the evaluation map of those $\rho$ markings $\bM_{0,n+\rho}(X,\beta)\rightarrow X^\rho$ and the embedding $D^\rho\hookrightarrow X^{\rho}$. 
\begin{thm}\label{thm:limitexist}
Fix a topological type $\Gamma=(0,n,\beta,\rho,\vec{\mu})$. For sufficiently large $r\gg 1$, the following cycle class
\[
r^{\rho_-}\tau_*([\bM_\Gamma(X_{D,r})]^{\vir})\in A_*(\bM_{0,n+\rho}(X,\beta)\times_{X^{\rho}} D^{\rho})
\]
is independent of $r$.
\end{thm}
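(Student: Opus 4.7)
The plan is to prove the stronger comparison result of Theorem~\ref{thm:compare}, which expresses $r^{\rho_-}\tau_*([\bM_\Gamma(X_{D,r})]^{\vir})$ as a sum of pushforwards from fiber products of $\bM_{*}(X,D)$ and $\bM^{\sim}_{*}(D)$, quantities in which $r$ does not appear at all. Once such a formula is established, the $r$-independence claim here is immediate, so the work is to give a closed formula for the orbifold cycle that is manifestly $r$-free, not merely to bound its $r$-dependence.

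First, I would apply the orbifold degeneration formula to the degeneration of $X$ to the normal cone of $D$, taking the $r$-th root stack along the section at infinity of the $\Pp^1$-bundle $Y=\Pp_D(N_{D/X}\oplus\sO_D)$. Because the $X$-side of the degeneration meets $D$ only with positive contact order (all negativity must live on the root-stack side), this reduces the computation to a sum indexed by bipartite splittings, in which each summand is a product of a classical relative virtual cycle on $(X,D)$ and an orbifold-relative cycle on the local root-stack model of $Y$ relative to its $\infty$-divisor.

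Second, I would apply virtual localization to the scaling $\C^*$ on the fibers of $Y$. As in \cite{TY}, the fixed loci are products of an orbifold Hurwitz--Hodge moduli supported over the zero-gerbe (a $\mu_r$-gerbe over $D$) and a rubber moduli over $D$ that records the bubbling at infinity. All of the remaining $r$-dependence now lives in two places: powers of $r$ arising from gerbe automorphisms and the root-stack normal bundle contribution to the virtual normal bundle; and the genus-zero Hurwitz--Hodge class on the zero-gerbe factor, whose ages are the large values $(r-|\mu_i|)/r$ attached to the negative-contact markings.

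The main obstacle is the genus-zero Hurwitz--Hodge identity placed in the Appendix. It must be formulated on the cycle level so that one can push forward to $\bM_{0,n+\rho}(X,\beta)\times_{X^\rho}D^\rho$ without loss of information, and it must identify the large-age Hurwitz--Hodge cycle over the zero-gerbe with a rubber cycle on $D$ multiplied by a specific power of $r$ that is exactly the one needed, after combining with the prefactor $r^{\rho_-}$, to cancel every leftover $r$ coming from the gerbe and the virtual normal bundle. This is where genus zero is truly used, since the relevant orbifold line bundles have vanishing higher cohomology on a genus-zero orbicurve; Maulik's counterexample rules out a direct analogue in higher genus. Once this identity is in hand, the combinatorial reorganization of the degeneration-plus-localization sum according to bipartite graphs $\fG\in\mathcal{B}_\Gamma$ matches the right-hand side of \eqref{equ:compare} term by term, and the claimed $r$-independence follows.
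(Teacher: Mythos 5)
Your proposal is correct and follows essentially the same route as the paper: deduce the $r$-independence from the stronger comparison of Theorem \ref{thm:compare}, proved by degenerating to the normal cone of $D$ (with the root structure on the $\Pp^1$-bundle piece), applying fiberwise $\C^*$-localization to the relative local model, and invoking the cycle-level genus-zero Hurwitz--Hodge/rubber identity of the Appendix to absorb all remaining powers of $r$. The only slight discrepancy is your stated reason that genus zero enters (cohomology vanishing on genus-zero orbicurves); in the paper the genus-zero restriction actually enters through the Abramovich--Cadman--Wise cycle comparison used to prove Lemma \ref{lemma:HHI}, but this does not affect the structure of the argument.
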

Theorem \ref{thm:limitexist} will be proven in Section \ref{sec:loc}. For sufficiently large $r$, denote the above cycle by \[\mathop{\mathrm{lim}}_{r\rightarrow \infty} r^{\rho_-}\tau_*([\bM_\Gamma(X_{D,r})]^{\vir}).\]

\begin{defn}\label{def:cycle}
Fix a topological type $\Gamma=(0,n,\beta,\rho,\vec{\mu})$. Define the \emph{relative Gromov--Witten cycle} of topological type $\Gamma$ as
\[
\mathfrak c_\Gamma(X/D) = \mathop{\mathrm{lim}}_{r\rightarrow \infty}r^{\rho_-}\tau_*([\bM_\Gamma(X_{D,r})]^{\vir}) \in A_*(\bM_{0,n+\rho}(X,\beta)\times_{X^{\rho}} D^{\rho}).
\]
\end{defn}

When $\rho_-=0$, $\mathfrak c_\Gamma(X/D)$ coincides with the pushforward of virtual cycle from the corresponding moduli of relative stable maps according to \cite{ACW}.

Recall that $\rho$ is used to denote the number of roots in the admissible graph $\Gamma$. Let $\rho_+$ be the number of roots whose weights are positive. We have
\begin{prop}\label{prop:vdim}
\[\mathfrak c_\Gamma(X/D) \in A_{d}(\bM_{0,n+\rho}(X,\beta)\times_{X^{\rho}} D^{\rho}),\] where 
\[d=\mathrm{dim}(X)-3+\int_{\beta} c_1(T_X(-\mathrm{log} D)) + n + \rho_+. \]
\end{prop}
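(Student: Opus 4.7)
The plan is a direct virtual dimension count. Since $\mathfrak c_\Gamma(X/D)$ is, by Definition \ref{def:cycle}, equal (for $r\gg 1$) to $r^{\rho_-}\tau_*([\bM_\Gamma(X_{D,r})]^{\vir})$, and since scalar multiplication does not change dimension while proper pushforward sends $A_k$ to $A_k$, it suffices to check that the virtual dimension of the moduli of orbifold stable maps $\bM_\Gamma(X_{D,r})$ is exactly
\[
d=\dim(X)-3+\int_{\beta} c_1(T_X(-\log D))+n+\rho_+.
\]

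First I would invoke the standard genus-zero virtual dimension formula for the moduli of twisted stable maps to a smooth DM stack $\mathcal X$ with $N$ marked points in curve class $\beta$:
\[
\mathrm{vdim}\,\bM_{0,N}(\mathcal X,\beta) \;=\; \int_\beta c_1(T_{\mathcal X}) \;+\; \dim \mathcal X - 3 \;+\; N \;-\; \sum_i \mathrm{age}_i.
\]
Applying this to $\mathcal X = X_{D,r}$ with $N=n+\rho$ markings, I would next compute the two $r$-dependent pieces. For the tangent bundle of the root stack one has $c_1(T_{X_{D,r}}) = c_1(T_X) - \bigl(1 - \tfrac{1}{r}\bigr)[D]$, so $\int_\beta c_1(T_{X_{D,r}}) = \int_\beta c_1(T_X) - (1-1/r)\int_\beta D$. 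For the ages, Convention \ref{conv:Gamma} gives $\mathrm{age}_i = \mu_i/r$ when $\mu_i>0$ and $\mathrm{age}_i = (r+\mu_i)/r = 1 + \mu_i/r$ when $\mu_i<0$. Summing and using $\sum_i \mu_i = \int_\beta D$,
\[
\sum_i \mathrm{age}_i \;=\; \frac{1}{r}\sum_i \mu_i + \rho_- \;=\; \frac{1}{r}\int_\beta D + \rho_-.
\]

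Substituting into the vdim formula, the $r$-dependent contributions $-\tfrac{1}{r}\int_\beta D$ cancel, and I get
\[
\mathrm{vdim}\,\bM_\Gamma(X_{D,r}) \;=\; \int_\beta c_1(T_X) - \int_\beta D + \dim X - 3 + n + \rho - \rho_-.
\]
Since $c_1(T_X(-\log D)) = c_1(T_X) - [D]$ in cohomology and $\rho-\rho_- = \rho_+$, this simplifies to $d$. This is independent of $r$, which is consistent with Theorem \ref{thm:limitexist}.

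Finally I would conclude by the naturality of dimension under proper pushforward: $\tau$ is proper (it factors through the rigidification and the forgetful map that drops orbifold structures on markings), so $\tau_*[\bM_\Gamma(X_{D,r})]^{\vir} \in A_d$ of the target, multiplication by $r^{\rho_-}$ preserves this, and taking the limit (which stabilizes by Theorem \ref{thm:limitexist}) gives $\mathfrak c_\Gamma(X/D) \in A_d$. There is no real obstacle here; the only mild care point is the sign and normalization conventions for $c_1(T_{X_{D,r}})$ and the ages in Convention \ref{conv:Gamma}, which is why the cancellation of the $1/r$ terms is worth writing out explicitly.
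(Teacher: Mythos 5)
Your proposal is correct and follows essentially the same route as the paper: both compute the virtual dimension of $\bM_\Gamma(X_{D,r})$ via the orbifold Riemann--Roch formula, use $c_1(T_{X_{D,r}})=c_1(T_X)-(1-\tfrac{1}{r})[D]$ together with the age assignments of Convention \ref{conv:Gamma}, and observe that the $1/r$ terms cancel since $\sum_i\mu_i=\int_\beta D$. Your explicit remarks on properness of $\tau$ and the preservation of dimension under pushforward are left implicit in the paper but are a harmless addition.
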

\begin{proof}
By Riemann-Roch theorem, we know that the virtual dimension of $\bM_\Gamma(X_{D,r})$ is given by 
\begin{eqnarray*}
& & \int_\beta c_1(T_{X_{D,r}})+(\dim (X)-3)(1-0)+n+\rho-\sum_{i:\mu_i> 0}\frac{\mu_i}{r}-\sum_{i:\mu_i<0}\frac{r+\mu_i}{r} \\
& = & \dim(X)-3+\int_{\beta} c_1(T_X(-\mathrm{log} D))+\frac{\int_\beta D}{r}-\frac{\sum_i\mu_i}{r}+n+\rho_+\\
& = & \mathrm{dim}(X)-3+\int_{\beta} c_1(T_X(-\mathrm{log} D)) + n + \rho_+.
\end{eqnarray*}
\end{proof}

\subsection{Relative invariants with negative contact orders}\label{rel-inv1}
We define relative Gromov--Witten invariants (possibly with negative contact orders) by integrations against this cycle. 

Let 
\begin{align*}
    \begin{split}
        \ualpha = (\bar\psi^{a_1}\alpha_1,\ldots,\bar\psi^{a_n}\alpha_n) 
        &\in (\C[\bar\psi] \otimes H^*(X))^{n},\\ \uepsilon = (\bar\psi^{b_1}\epsilon_1,\ldots,\bar\psi^{b_{\rho}}\epsilon_\rho) &\in (\C[\bar\psi] \otimes H^*(D))^{\rho}.
    \end{split}
\end{align*}
There are evaluation maps from $\bM_\Gamma(X_{D,r})$ corresponding to interior markings and relative markings:
\begin{align*}
\ev_X=(\ev_{X,1},\ldots,\ev_{X,n}):\bM_\Gamma(X_{D,r})&\rightarrow X^n, \\
\ev_D=(\ev_{D,1},\ldots,\ev_{D,\rho}):\bM_\Gamma(X_{D,r})&\rightarrow D^\rho.
\end{align*}
There are also evaluation maps
\begin{align*}
\overline{\ev}_X=(\overline\ev_{X,1},\ldots,\overline\ev_{X,n}):\bM_{0,n+\rho}(X,\beta)\times_{X^{\rho}} D^{\rho}&\rightarrow X^n, \\
\overline{\ev}_D=(\overline\ev_{D,1},\ldots,\overline\ev_{D,\rho}):\bM_{0,n+\rho}(X,\beta)\times_{X^{\rho}} D^{\rho}&\rightarrow D^\rho,
\end{align*}
such that 
\[
\overline{\ev}_X\circ \tau=\ev_X, \quad \overline{\ev}_D\circ\tau=\ev_D.
\]

\begin{defn}\label{rel-inv-neg1}
The {\it relative Gromov--Witten invariant of topological type $\Gamma$ with insertions $\uepsilon,\ualpha$} is
\[
\langle \uepsilon \mid \ualpha \rangle_{\Gamma}^{(X,D)} =  \displaystyle\int_{\mathfrak c_\Gamma(X/D)} \prod\limits_{j=1}^{\rho} \bar{\psi}_{D,j}^{b_j}\overline{\ev}_{D,j}^*\epsilon_j\prod\limits_{i=1}^n \bar{\psi}_{X,i}^{a_i}\overline{\ev}_{X,i}^*\alpha_i,
\]
where $\bar\psi_{D,j}, \bar\psi_{X,i}$ are pullback of psi-classes from $\bM_{0,n+\rho}(X,\beta)$ to $\bM_{0,n+\rho}(X,\beta)\times_{X^{\rho}} D^{\rho}$ corresponding to markings evaluated under $\overline\ev_D,\overline\ev_X$.
\end{defn}

\section{Graph notation}\label{sec:graph}
In Sections \ref{sec:graph} and \ref{sec:rel-neg}, the second definition of relative Gromov--Witten theory with negative contact orders will be given. In this new definition, all ingredients are in terms of relative moduli and rubber moduli in the sense of \cites{Jun1,Jun2}. The second definition has some geometrical and computational benefits. The fact that the two definitions coincide will be established in Section \ref{sec:loc}.

The purpose of this section is to establish the notation for a special type of decorated bipartite graph. The bipartite graph has two sides which we label as $0$-side and $\infty$-side. We first define graphs with half-edges on each side (graphs of type $0$ and graphs of type $\infty$), and then glue the corresponding half-edges in a specific way. Before throwing out formal definitions, maybe it is helpful to provide some geometric explanations. 

Geometrically, $0$-side corresponds to rubber targets over $D$, and $\infty$-side corresponds to $X$. Rubbers over $0$-side contain a boundary divisor $D$ with prescribed contact orders (corresponding to $0$-roots on it). The other end glues with $X$ on the $\infty$-side along the divisor $D\subset X$. What is different from the picture of \cite{Jun1} is that the ``crease" between rubbers and $X$ (the invariant section of rubbers that glues to $X$) may contain markings (corresponding to $\infty$-roots of marking type) that do not form a node with balancing contact orders. Furthermore, since there can be multiple vertices over $0$-side, we do allow several independent rubber targets to glue to $X$ along the same $D$. 

\subsection{Graphs of type $0$ and type $\infty$}
We first define graphs with half-edges at $0$-side and $\infty$-side.
\begin{defn}\label{def:admgraph0}
\emph{A (connected) graph of type $0$} is a weighted graph $\Gamma^0$ consisting of a
single vertex, no edges, and the following four types of half-edges.
\begin{enumerate}
\item {$0$-roots},
\item {$\infty$-roots of node type},
\item {$\infty$-roots of marking type},
\item {Legs}.
\end{enumerate}
$0$-roots are weighted by
positive integers, and $\infty$-roots are weighted by negative integers. The vertex is associated with a tuple $(g,\beta)$ where $g\geq 0$
and $\beta\in H_2(D,\Z)$. 
\end{defn}

\begin{ex}\label{ex:1}
Omitting the decoration $(g,\beta)$ and all the weights, the following is a valid graph of type $0$.

\includegraphicsdpi{600}{}{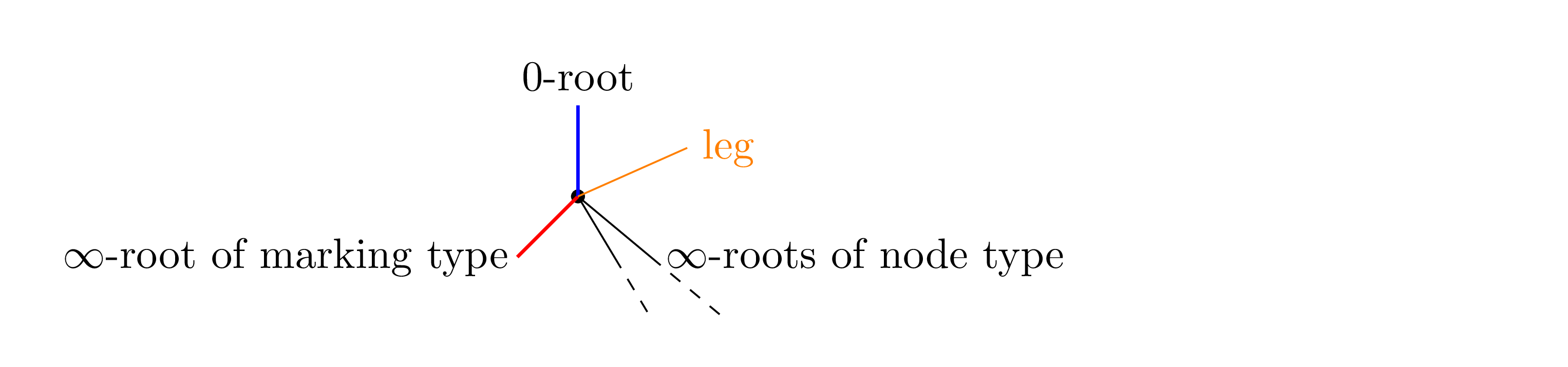}
Dashed lines on $\infty$-roots of node type indicate that these half edges will form full edges by gluing. In this picture, negatively weighted roots are pointing downwards ($\infty$-roots of both types).
\end{ex}

\begin{defn}\label{def:halfedges}
We denote 
\begin{itemize}
    \item the set of legs by $\HE_{l}(\Gamma^0)$,
    \item the set of $0$-roots by $\HE_0(\Gamma^0)$,
    \item the set of $\infty$-roots of marking type by $\HE_{m}(\Gamma^0)$,
    \item and the set of $\infty$-roots of node type by $\HE_{n}(\Gamma^0)$.
\end{itemize}
To simplify our notation later, we also define the following.
\begin{itemize}
    \item $\HE_{l,0}(\Gamma^0)=\HE_{l}(\Gamma^0)\coprod \HE_{0}(\Gamma^0)$.
    \item $\HE_{m,n}(\Gamma^0)=\HE_{m}(\Gamma^0)\coprod \HE_{n}(\Gamma^0)$.
    \item $\HE_{l,0,m}(\Gamma^0)=\HE_{l}(\Gamma^0)\coprod \HE_{0}(\Gamma^0)\coprod\HE_{m}(\Gamma^0)$.
    \item $\HE(\Gamma^0)=\HE_{l}(\Gamma^0)\coprod \HE_{0}(\Gamma^0)\coprod\HE_{m}(\Gamma^0)\coprod\HE_{n}(\Gamma^0)$ (the set of all half edges).
\end{itemize} 
\end{defn}

Later, graphs of type $0$ will be used in two different ways. One is to record the topological type of an orbifold stable map to the $r$th root gerbe $\sqrt[r]{D/L}$ over $D$ with respect to a line bundle $L$. And the other one is to record the topological type of a relative stable map to a rubber target over $D$. In order to match the notation with Section \ref{sec:localmodel}, we write
\[
\cD_0=\sqrt[r]{D/L}.
\]

\begin{defn}
Define $\bM_{\Gamma^0}(\cD_0)$ to be the moduli stack of genus $g$, degree $\beta$ orbifold stable maps to $\cD_0$ whose markings correspond to half-edges of $\Gamma^0$ with the following assignments of ages:
A $0$-root of weight
$i$ corresponds to a marking of age $i/r$. An $\infty$-root (of either type) of weight $i$ corresponds to a
marking of age $(r+i)/r$ (recall $i$ is negative in this case). A leg corresponds to a marking of age $0$ (that is, without orbifold structure).
\end{defn}

Recall that in the relative theory with rubber (nonrigid) target over $D$, the target expands as a chain of $\Pp_D(L\oplus \sO)$ glued along suitable sections. Two distinguished sections are denoted by $D_0$ and $D_\infty$. Our convention is that the normal bundle of $D_0$ is $L$.
\begin{defn}\label{def:typ0}
Define $\bM^\sim_{\Gamma^0}(D)$ to be the moduli stack of genus $g$, degree $\beta$ relative stable maps to rubber targets over $D$. Each marking corresponds to a half-edge of $\Gamma^0$ with the following assignments of contact orders: a $0$-root of weight $i$ corresponds to a relative marking over $D_0$ with contact order $i$. An $\infty$-root (of either type) of weight $i$ corresponds to a relative marking over $D_\infty$ of contact order $-i$ (recall $i$ is negative). A leg corresponds to an interior marking.
\end{defn}

On the other hand, the term \emph{graph of type $\infty$} is simply an admissible graph such that the roots are distinguished by node type and marking type. The term ``type $\infty$" indicates its position in the bipartite graph introduced later. 

\begin{ex}\label{ex:2}
Similar to Example \ref{ex:1}, we omit the decoration and weights. The following is a valid graph of type $\infty$.

\includegraphicsdpi{600}{}{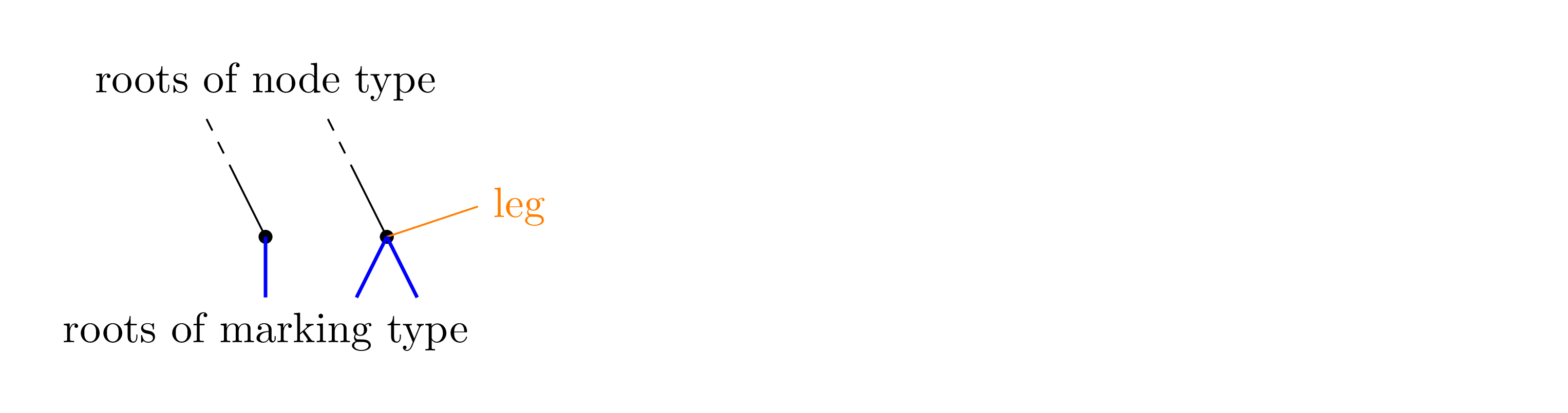}
Note that a graph of type $0$ must consist of one single vertex, but a graph of type $\infty$ may be disconnected. All roots are weighted positively.
\end{ex}

\begin{defn}\label{def:halfedges-new} Let $\Gamma^{\infty}$ be a graph of type $\infty$. We define
\begin{itemize}
    \item $\HE_{l}(\Gamma^{\infty})$ to be the set of legs,
    \item $\HE_{m}(\Gamma^{\infty})$ to be the set of roots of marking type,
    \item and $\HE_{l,m}(\Gamma^{\infty})=\HE_{l}(\Gamma^{\infty})\coprod\HE_{m}(\Gamma^{\infty}).$
\end{itemize}
\end{defn}

\subsection{Admissible and localization bipartite graphs}
We use graphs of type $0$ and $\infty$ to define a type of decorated bipartite graphs.
\begin{defn}\label{defn:locgraph}
\emph{An admissible bipartite graph} $\fG$ is a tuple
$(\mS_0,\Gamma^\infty,I,E,\g,b)$, where each element is explained as follows.
\begin{enumerate}
\item \emph{(Vertices)} $\mS_0=\{\Gamma_i^0\}$ is a set of graphs of type $0$;
  $\Gamma^\infty$ is a (possibly disconnected) graph of type $\infty$.
\item \emph{(Edges)} $E$ is a set of pairs $((l,\Gamma^0_i),(l',\Gamma^\infty))$, where $l$ is an $\infty$-root of node type in $\Gamma^0_i\in \mS_0$, and $l'$ is a root  
of node type in $\Gamma^\infty$.
\item \emph{(Markings)} $I$ is a one-to-one correspondence between the set $\{1,\ldots, n+\rho\}$ and the set $\coprod\limits_{\Gamma_i^0\in \mS_0}\HE_{l,0,m}(\Gamma_i^0)\coprod \HE_{l,m}(\Gamma^{\infty})$ such that
\begin{itemize}
    \item $\coprod\limits_{\Gamma_i^0\in \mS_0}\HE_{l}(\Gamma_i^0)\coprod \HE_l(\Gamma^{\infty})$ corresponds to the subset $\{1,\ldots,n\}$,
    \item and $\coprod\limits_{\Gamma_i^0\in \mS_0}\HE_{0,m}(\Gamma_i^0)\coprod \HE_m(\Gamma^{\infty})$ corresponds to the subset $\{n+1,\ldots,n+\rho\}$
\end{itemize}
  (for the notation $\HE_{*}(\cdot)$, see Definition \ref{def:halfedges} and \ref{def:halfedges-new}).
\item \emph{(Genus and degree)} \[\g:\bigcup\limits_{\Gamma^0_i\in \mS_0} V(\Gamma^0_i)\cup V(\Gamma^\infty)\rightarrow \Z_{\geq 0},\]
  \[b:\bigcup\limits_{\Gamma^0_i\in \mS_0} V(\Gamma^0_i)\cup V(\Gamma^\infty)\rightarrow H_2(D,\Z)\cup H_2(X,\Z)\] are maps between sets such that
  \[
  b\bigg(\bigcup\limits_{\Gamma^0_i\in \mS_0} V(\Gamma^0_i)\bigg)\subset H_2(D,\Z), \qquad b\bigg(V(\Gamma^\infty)\bigg)\subset H_2(X,\Z).
  \]
\end{enumerate}
In addition, $\Gamma$ satisfies the following.
\begin{enumerate}
    \item Each $\infty$-root of node type in admissible graphs of $\mS_0$, and each root of node type in $\Gamma^\infty$ appears exactly once as an element of a pair in $E$.
    \item The maps $\g,b$ are compatible with the genus and degree decorations on admissible graphs.
    \item For a graph $\Gamma^0_i$, the sum of weights of all roots equals $\int_{b(\Gamma^0_i)} D$. For a vertex in $\Gamma^\infty$, the sum of weights of all roots also equals the intersection of curve class with $D$.
    \item For each edge $((l,\Gamma^0_i),(l',\Gamma^\infty))$, the weights of $l$ and $l'$ add up to $0$.
    \item All the vertices in $\bigcup\limits_{\Gamma^0_i\in \mS_0} V(\Gamma^0_i)\cup V(\Gamma^\infty)$ are \emph{stable}. We call a vertex $v$ \emph{stable} if either $b(v)\neq 0$ or the number of half-edges associated with $v$ is bigger than $2-2g(v)$.
\end{enumerate}
\end{defn}

In the definition, $E$ should be understood as edges connecting type $0$ and type $\infty$ graphs at two ends. We say $\fG$ is connected if the actual bipartite graph is a connected graph. An automorphism of a bipartite graph is defined as a permutation of vertices and edges that preserves legs, roots of marking type and the assignments of genus and degree. The group of automorphisms is denoted by $\operatorname{Aut}(\fG)$.

\begin{ex}\label{ex:3}
Suppose that $\mS_0$ consists of a single graph as in Example \ref{ex:1}, and $\Gamma^\infty$ is the graph in Example \ref{ex:2}. By joining $\infty$-roots of node type in $\Gamma^0$ with roots of node type in $\Gamma^\infty$ (assume condition (d) above is satisfied), we obtain the following admissible bipartite graph.

\includegraphicsdpi{600}{}{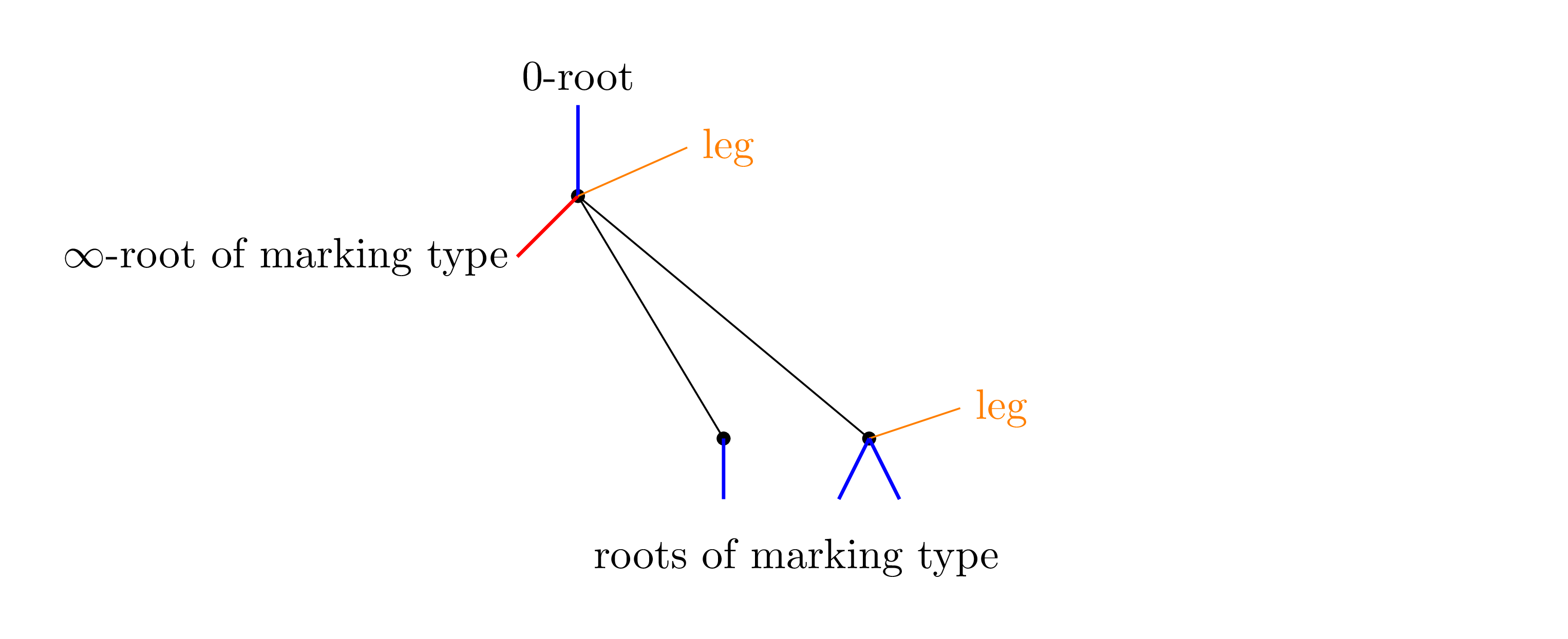}
After gluing, $\infty$-roots of marking type are the only negatively weighted roots (and ultimately corresponding to negative contact points). Roots of node type now form edges. We suggest readers to think of the absolute values of their weights as ``multiplicities of the edges".
\end{ex}

For technical purposes, we need to define a variation of admissible bipartite graphs.
\begin{defn}\label{def:loc-graph}
\emph{A localization bipartite graph} $\fG$ is a tuple $(\mS_0,\Gamma^\infty,I,E,\g,b)$, where all the definitions and requirements are the same as an admissible bipartite graph except the following.
\begin{itemize}
    \item $\Gamma^\infty$ is a rubber graph. The rubber graph $\Gamma^\infty$ satisfies constraint \eqref{eqn:rubberint}.
    \item Since a rubber graph $\Gamma^\infty$ no longer divides roots into node type and marking type, any $0$-root in $\Gamma^\infty$ is connected to an $\infty$-root of node type in $\Gamma^0_i$ by some edge in $E$.
    \item The curve class assignment $b$ has a different (smaller) target. More precisely, $b$ is now a map
    \[b:\bigcup\limits_{\Gamma^0_i\in \mS_0} V(\Gamma^0_i)\cup V(\Gamma^\infty)\rightarrow H_2(D,\Z),\]
    that is, $b$ maps both $V(\Gamma^0_i)$ and $V(\Gamma^\infty)$ into $H_2(D,\Z)$.
    \item $\Gamma^0_i$ and $\Gamma^\infty$ are allowed to be unstable. We call $\Gamma^0_i$ (respectively, $\Gamma^\infty$) unstable if every vertex in $V(\Gamma^0_i)$ (respectively, $V(\Gamma^\infty)$) is unstable.
\end{itemize} 
\end{defn}
As the name indicates, a localization bipartite graph is used in the localization process in Section \ref{sec:loc}. If the reader skips the technical details in Section \ref{sec:loc}, this definition can be safely ignored. 

\begin{rmk}\label{rmk:unstable}
If $\Gamma^\infty$ contains an unstable vertex which is connected to another vertex in some $\Gamma^0_i$, it would be of genus zero, curve class $0$ with one $0$-root, one $\infty$-root and nothing else. In Section \ref{sec:loc}, the case that $\Gamma^{\infty}$ is unstable will correspond to the fixed locus where the target does not degenerate over $\infty$.
\end{rmk}

Given an admissible bipartite graph $\fG$, we can talk about its topological type. More precisely, we make the following definition.
\begin{defn}\label{def:topotype}
The topological type of an admissible bipartite graph $\fG$ is a tuple $(g,n,\beta,\rho,\vec{\mu})$ where 
\begin{itemize}
    \item $g$ is the sum of genera (values of $\g$) of all vertices plus $h^1(\fG)$ (as $1$-dimensional CW-complex);
    \item $\beta\in H_2(X,\Z)$ is the sum of curve classes (values of $b$) of all vertices (curve classes of $\Gamma^0_i$ in $H_2(D,\Z)$ are pushed forward to $H_2(X,\Z)$);
    \item $n$ is the number of legs;
    \item $\rho$ is the total number of $0$-roots, $\infty$-roots of marking type in $\mS_0$ and roots of marking type in $\Gamma^{\infty}$;
    \item and $\vec{\mu}$ is the list of weights of $0$-roots, $\infty$-roots of marking type in $\mS_0$ and roots of marking type in $\Gamma^{\infty}$.
\end{itemize}   
\end{defn}
When $\fG$ is a localization bipartite graph, we define its topological type by $(g,n,\beta,\rho,\vec{\mu},\rho_{\eta},\vec{\eta})$, where $g,n$ are similar to Definition \ref{def:topotype} plus the following.
\begin{itemize}
    \item $\beta$ is the sum of curve classes as well, but lies in $H_2(D,\Z)$;
    \item $\rho$ is the total number of $0$-roots, $\infty$-roots of marking type in $\mS_0$ and $0$-roots of marking type in $\Gamma^{\infty}$;
    \item $\vec{\mu}$ is the list of weights of $0$-roots, $\infty$-roots of marking type in $\mS_0$ and $0$-roots of marking type in $\Gamma^{\infty}$;
    \item In addition, $\rho_\eta$ is the number of $\infty$-roots in $\Gamma^\infty$. And $\vec{\eta}$ is the list of weights of $\infty$-roots in $\Gamma^\infty$.
\end{itemize}

\section{Relative theory as a graph sum}\label{sec:rel-neg}
From now on, we focus on admissible bipartite graphs with $g=0$. 
We are ready to state the second definition of relative Gromov--Witten cycles (with negative contact orders) in this section. 

\subsection{The set-up and notation}\label{sec:rel-neg-set-up}
Let $X$ be a smooth projective variety and $D$ a smooth divisor. Later in computations, we also use $D$ for the class $c_1(\sO_X(D))$ in $H^2(X)$. Its restriction to the hypersurface $D$ is also frequently used in insertions at relative markings. Because the context is clear, we abuse the notation by using the same $D$ for $c_1(\sO_X(D))|_D\in H^2(D)$. We also remark that $D$ as the divisor class is the same as the first Chern class of $N_{D/X}$.

Let $\Gamma=(0,n,\beta,\rho,\vec{\mu})$ with $\vec{\mu}=(\mu_1,\dotsc,\mu_\rho)\in (\Z^*)^\rho$ satisfying 
\[
\sum\limits_{i=1}^\rho \mu_i = \int_\beta D. 
\]
Let $\mathcal B_\Gamma$ be the set of connected admissible bipartite graphs of topological type $\Gamma$. 

Given a bipartite graph $\fG\in \mathcal B_\Gamma$, 
\begin{equation}\label{eqn:fiberprod}
\bM_{\fG} = \prod\limits_{\Gamma^0_i\in \mS_0}\bM^\sim_{\Gamma^0_i}(D) \times_{D^{|E|}} \bM^{\bullet}_{\Gamma^\infty}(X,D),
\end{equation}
where $\times_{D^{|E|}}$ is the fiber product identifying evaluation maps according to edges (specified in the set $E$). For $\bM^\sim_{\Gamma_i^0}(D)$, see Definition \ref{def:typ0}. For $\bM^{\bullet}_{\Gamma^\infty}(X,D)$, recall that it is the moduli of relative stable maps with possibly disconnected domains of type $\Gamma^{\infty}$. In particular, we have the following diagram.
\begin{equation}\label{eqn:diag}
\xymatrix{
\bM_{\fG} \ar[r]^{} \ar[d]^{\iota} & D^{|E|} \ar[d]^{\Delta} \\
\prod\limits_{\Gamma^0_i\in \mS_0}\bM^\sim_{\Gamma^0_i}(D) \times \bM^{\bullet}_{\Gamma^\infty}(X,D) \ar[r]^{} & D^{|E|}\times D^{|E|}.
}
\end{equation}
There is a natural virtual class $[\bM_{\fG}]^{\vir}$. In fact, we have
\[
[\bM_{\fG}]^{\vir}=\Delta^![\prod\limits_{\Gamma^0_i\in \mS_0}\bM^\sim_{\Gamma^0_i}(D) \times \bM^{\bullet}_{\Gamma^\infty}(X,D)]^{\vir}
\]
where $\Delta^!$ is the Gysin map.

For each $\bM^\sim_{\Gamma_i^0}(D)$, we have a stabilization map $\bM^\sim_{\Gamma_i^0}(D) \rightarrow \bM_{0,n_i+\rho_i}(D,\beta_i)$ where $n_i$ is the number of legs, $\rho_i$ is the number of $0$-roots plus the number of $\infty$-roots of marking type, and $\beta_i$ is the curve class of $\Gamma_i^0$. As a result, there is a map \[
\bM_{\fG}=\prod\limits_{\Gamma^0_i\in \mS_0}\bM^\sim_{\Gamma^0_i}(D) \times_{D^{|E|}} \bM^{\bullet}_{\Gamma^\infty}(X,D)
\rightarrow 
\prod\limits_{\Gamma^0_i\in \mS_0}\bM_{0,n_i+\rho_i}(D,\beta_i) \times_{D^{|E|}} \bM^{\bullet}_{\Gamma^\infty}(X,D).
\]

On the other hand, there is a boundary map
\[
\prod\limits_{\Gamma^0_i\in \mS_0}\bM_{0,n_i+\rho_i}(D,\beta_i) \times_{D^{|E|}} \bM^{\bullet}_{\Gamma^\infty}(X,D) \rightarrow \bM_{0,n+\rho}(X,\beta)\times_{X^{\rho}} D^{\rho}
\]
gluing curves according to $E$ (note that $\fG$ is connected), and label markings according to $I$. 
By composing these two, we obtain a map 
\[
\mathfrak t_{\fG}:\bM_{\fG}\rightarrow \bM_{0,n+\rho}(X,\beta)\times_{X^{\rho}} D^{\rho}.
\]

\begin{ex}
The fiber product construction corresponds to a gluing process of curves. We demonstrate this gluing in this example. Let $\fG$ be the admissible bipartite graph in Example \ref{ex:3}. The following illustration shows the corresponding curve type: 

\includegraphicsdpi{600}{}{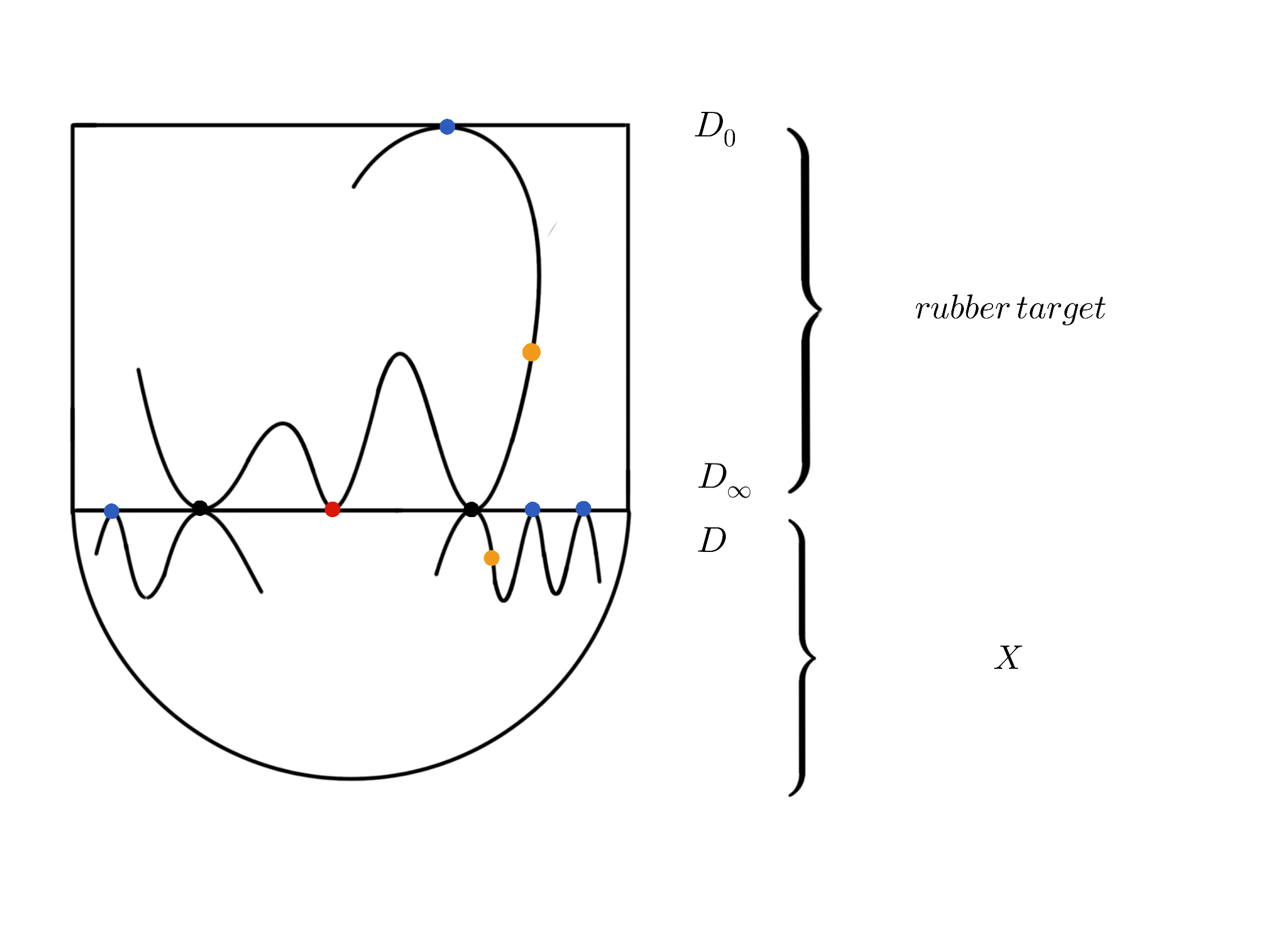}

The $0$-root becomes the relative marking at $D_0$ (in blue); the roots of marking type of $\Gamma^\infty$ become relative markings on components in $X$ falling on the ``crease" $D_\infty$ (in blue); the $\infty$-root of marking type of $\Gamma^0$ becomes the relative marking on the rubber component at $D_\infty$ (in red).

\end{ex}

Next, we need to introduce a few classes in Chow cohomology on certain moduli spaces. First, we consider $\bM^\bullet_{\Gamma^\infty}(X,D)$. According to \cite{GV}*{Section 2.5}, $\bM^{\bullet}_{\Gamma^\infty}(X,D)$ admits a map to $\mathcal T$(or \cite{Jun1}*{Section 4} as $\mathfrak Z^{\rel}$), the Artin stack of expanded degenerations of $(X,D)$. Also, by \cite{GV}*{Section 2.5}, $\mathcal T$ can be viewed as an open substack of $\mathfrak M_{0,3}$ (the Artin stack of prestable $3$-pointed rational curve) consisting of chains of curves that separate $\infty$ from $0,1$ (in notation of \cite{GV}). There is a divisor corresponding to cotangent lines at $\infty$, and we denote by $\Psi$ its pullback to $\bM^{
\bullet}_{\Gamma^{\infty}}(X,D)$. 

There is a divisor on $\bM^{\bullet}_{\Gamma^{\infty}}(X,D)$ corresponding to the locus where the target degenerates at least once (denoted by $\delta$).  
\begin{lem}\label{lem:psi}
$\Psi$ is linearly equivalent to $\delta$. 
\end{lem}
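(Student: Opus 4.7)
The plan is to reduce the identity to the Artin stack $\mathcal{T}$ of expanded degenerations of $(X,D)$. By definition, $\Psi$ is the pullback from $\mathcal{T}$ of the cotangent-line divisor $\Psi_{\mathcal{T}}$ at the section $\sigma_\infty$ of the universal $3$-pointed prestable rational curve $\pi:\mathcal{C}\to\mathcal{T}$, while $\delta$ is the pullback of the codimension-one locus $\delta_{\mathcal{T}}\subset\mathcal{T}$ where the target has acquired at least one bubble. It therefore suffices to prove $\Psi_{\mathcal{T}}\sim\delta_{\mathcal{T}}$ in $\mathrm{Pic}(\mathcal{T})$; pulling back along $\bM^\bullet_{\Gamma^\infty}(X,D)\to\mathcal{T}$ then yields the lemma.

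To prove the identity on $\mathcal{T}$ I would work in the standard étale charts. A point parametrizing a chain of length $m+1$ admits an étale neighborhood $[\A^m/\G_m^m]$, where the coordinates of $\A^m$ are the smoothing parameters $t_1,\dotsc,t_m$ of the $m$ nodes, and each $\G_m$ factor rescales one of the bubble components. In such a chart $\delta_{\mathcal{T}}$ is the effective divisor $\sum_{i=1}^m\{t_i=0\}$. A direct weight computation shows that the cotangent line $\sigma_\infty^*\omega_\pi$ transforms under $\G_m^m$ by the same character as this sum of divisors, and hence the two agree in $\mathrm{Pic}$ of the chart; these local identifications then glue to the global identity on $\mathcal{T}$.

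Equivalently, and perhaps more geometrically, one can exhibit a canonical global section of $\sigma_\infty^*\omega_\pi$ whose zero locus is exactly $\delta_{\mathcal{T}}$ with multiplicity one. A natural candidate is the restriction to $\sigma_\infty$ of the unique meromorphic differential on each fiber of $\pi$ having simple poles with residues $\pm 1$ at $\sigma_0,\sigma_1$ and no other poles --- on a smooth $\Pp^1$ this is $dz/(z(z-1))$, which is regular and nonvanishing at $\infty$. Checking across nodes of a chain shows the differential remains regular on every bubble and at $\sigma_\infty$, but its value at $\sigma_\infty$ picks up an explicit factor of each smoothing parameter that separates $\sigma_\infty$ from $\sigma_0,\sigma_1$, producing the desired section.

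The main technical obstacle is to book-keep the sign and weight conventions consistently under the paper's expanded-degeneration setup --- in particular matching the $\G_m$-weight on the cotangent at $\sigma_\infty$ with the weight on each smoothing coordinate given the convention $L=N_{D/X}$ at $D_0$ --- so that no unwanted sign appears between $\Psi_{\mathcal{T}}$ and $\delta_{\mathcal{T}}$. Once this is pinned down in the basic case $[\A^1/\G_m]$ (a single degeneration), the longer chain case follows by additivity of characters over successive bubbles.
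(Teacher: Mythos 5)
Your argument is correct and is essentially the paper's proof carried out by hand: the paper reduces to $\mathcal T$ and invokes the standard comparison of $\Psi$ with the pullback of $\psi_\infty$ along the stabilization $\mathcal T\to\bM_{0,3}$, and your explicit section of $\sigma_\infty^*\omega_\pi$ (the third-kind differential $dz/(z(z-1))$, which is precisely the trivialization of $\psi_\infty$ on the point $\bM_{0,3}$) is exactly how that comparison is proved, with the local model $xy=t$ confirming that the vanishing along each boundary component has multiplicity one. No gap.
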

This can be proven by a standard comparison between $\Psi$ and the pullback of the corresponding psi-class on $\bM_{0,3}$ along the stabilization map $\mathcal T\rightarrow\bM_{0,3}$.
This allows for an explicit computation of $\Psi$, since $\delta$ can be constructed by a fiber product of relative moduli spaces and rubber moduli spaces (with multiplicities counted similarly as in the degeneration formula).

Let $t$ be a formal parameter. Given a $\Gamma^\infty$, we define
\begin{align}\label{neg-rel-infty}
C_{\Gamma^\infty}(t)=\dfrac{t}{t+\Psi}\in A^*(\bM^\bullet_{\Gamma^\infty}(X,D))[t^{-1}].
\end{align}

Next, we shift our focus to $\bM^\sim_{\Gamma_i^0}(D)$. Define
\[
c(l)=\Psi_\infty^l-\Psi_\infty^{l-1}\sigma_1+\ldots+(-1)^l\sigma_l,
\]
where $\Psi_\infty$ is the divisor corresponding to the cotangent line bundle determined by the relative divisor on $\infty$ side (besides \cite{GV}, also see \cite{MP}*{Section 1.5.2}). We then define
\[
\sigma_k=\sum\limits_{\{e_1,\ldots,e_k\}\subset \HE_{m,n}(\Gamma_i^0)} \prod\limits_{j=1}^{k} (d_{e_j}\bar\psi_{e_j}-\ev_{e_j}^*D),
\]
where $d_{e_j}$ is the absolute value of the weight at the root $e_j$. We adopt the convention that $\sigma_k=0$ if $k>|\HE_{m,n}(\Gamma_i^0)|$.
Here $\bar\psi_{e_j}$ is the pullback of the corresponding $\psi$ class from $\bM_{0,n+\rho_0+\rho_\infty}(D)$. $D$ in the pullback of evaluation map is interpreted as divisor class restricted to hypersurface, as explained at the beginning of the section. 

For each $\Gamma_i^0$, define
\begin{align}\label{neg-rel-0}
C_{\Gamma_i^0}(t)= \dfrac{\sum_{l\geq 0}c(l) t^{\rho_\infty(i)-1-l}}{\prod\limits_{e\in \HE_{n}(\Gamma_i^0)} \big(\frac{t+\ev_e^*D}{d_e}-\bar\psi_e\big) } \in A^*(\bM^\sim_{\Gamma_i^0}(D))[t,t^{-1}],
\end{align}
where $\rho_\infty(i)$ is the number of $\infty$-roots (of both types) associated with $\Gamma_i^0$.

\subsection{Relative Gromov--Witten cycle with negative contact points}\label{sec:rel-cycle}
In this subsection, we construct a cycle in $\bM_{0,n+\rho}(X,\beta)\times_{X^{\rho}} D^{\rho}$. In the next subsection, relative invariants are defined by the integration against this cycle.

For each $\fG$, we write
\begin{equation}\label{eqn:cg}
C_{\fG}=\left[ p_{\Gamma^\infty}^*C_{\Gamma^\infty}(t)\prod\limits_{\Gamma_i^0\in \mS_0} p_{\Gamma_i^0}^*C_{\Gamma_i^0}(t) \right]_{t^{0}}, 
\end{equation}
where 
$[\cdot]_{t^{0}}$ means taking 
the constant term, and $p_{\Gamma^\infty}, p_{\Gamma_i^0}$ are projections from $\prod\limits_{\Gamma^0_i\in \mS_0}\bM^\sim_{\Gamma^0_i}(D) \times \bM^{\bullet}_{\Gamma^\infty}(X,D)$ to corresponding factors. Recall
\[
\iota: \bM_\fG\rightarrow \prod\limits_{\Gamma^0_i\in \mS_0}\bM^\sim_{\Gamma^0_i}(D) \times \bM^{\bullet}_{\Gamma^\infty}(X,D)
\]
is the closed immersion from diagram \eqref{eqn:diag}.

\begin{defn}\label{def-rel-cycle}
Define \emph{the relative Gromov--Witten cycle of the pair $(X,D)$ of topological type $\Gamma$} to be 
\[\mathfrak c_\Gamma(X/D) = \sum\limits_{\fG \in \mathcal B_\Gamma} \dfrac{1}{|\Aut(\fG)|}(\mathfrak t_{\fG})_* ({\iota}^* C_{\fG} \cap [\bM_{\fG}]^{\vir}) \in A_*(\bM_{0,n+\rho}(X,\beta)\times_{X^{\rho}} D^{\rho}),\]
where $\iota$ is the vertical arrow in diagram \eqref{eqn:diag}.
\end{defn}

We would like to point out that $\infty$-roots of marking type in each $\fG$ should be regarded as markings ``of negative contact orders". Also note that due to the convention of $I$ (see Definition \ref{defn:locgraph}), the first $n$ markings correspond to interior markings, and the following $\rho$ markings correspond to relative markings (possibly of negative contact orders).

In fact, $\mathfrak c_\Gamma(X/D)$ is of pure dimension. If one traces back to the definitions of involved classes, it is not hard to compute the dimension of the relative Gromov--Witten cycle. One can check that the cycle $\mathfrak c_\Gamma(X/D)$ is of dimension
\[
d=\mathrm{dim}(X)-3+\int_{\beta} c_1(T_X(-\mathrm{log} D)) + n + \rho_+.
\]
Thus, the dimension matches with Proposition \ref{prop:vdim}, which can also be deduced from Theorem \ref{thm:compare}.

\begin{ex}[Relative Gromov--Witten cycle without negative markings]
When there are no negative markings, any $\Gamma_i^0$ does not have $\infty$-roots of marking type. Thus, all the $\infty$-roots are of node type, that is,
\[\rho_\infty=|\HE_n(\Gamma_i^0)|.\] 
It is straightforward to check that
\[C_{\Gamma_i^0}(t)=\bigg( \prod\limits_{e\in \HE_{n}(\Gamma_i^0)}d_e\bigg) t^{-1}+O(t^{-2}).\] 
For degree reasons, if $\fG$ has any graph of type $0$, $\mathfrak c_{\fG}$ would have $0$ as its constant term. As a result, $\mathfrak c_{\Gamma}(X/D)$ is simply $(\mathfrak t_{\fG})_*([\bM_\Gamma(X,D)]^{\vir})$ where $\fG$ is the bipartite graph without vertices on $0$-side. If we integrate insertions against this cycle, we recover the original definition of relative Gromov--Witten invariants.
\end{ex}
\begin{ex}[Relative Gromov--Witten cycle only $1$ negative marking]\label{ex:1neg}
For a given bipartite graph $\fG$, we denote the only $\infty$-root of marking type by $p$. Suppose that $p$ lies in the vertex $\Gamma^0$. Then $\rho_\infty=|\HE_n(\Gamma^0)|+1$. Note that 
\[C_{\Gamma^0}(t)=\prod\limits_{e\in \HE_{n}(\Gamma^0)}d_e+O(t^{-1}),\]
because $c_0=1$. If there exists another graph $\Gamma_i^0$ of type $0$, it would contribute 
\[t^{-1}\bigg(\prod\limits_{e\in \HE_{n}(\Gamma_i^0)}d_e+O(t^{-1})\bigg).\] 
So in order to get nonzero constant terms, we cannot allow more graphs of type $0$. As a result, $\mS_0=\{\Gamma^0\}$. Since $\Gamma^\infty$ may consist of multiple vertices, such bipartite graphs $\fG$ should look like the following.

\includegraphicsdpi{600}{}{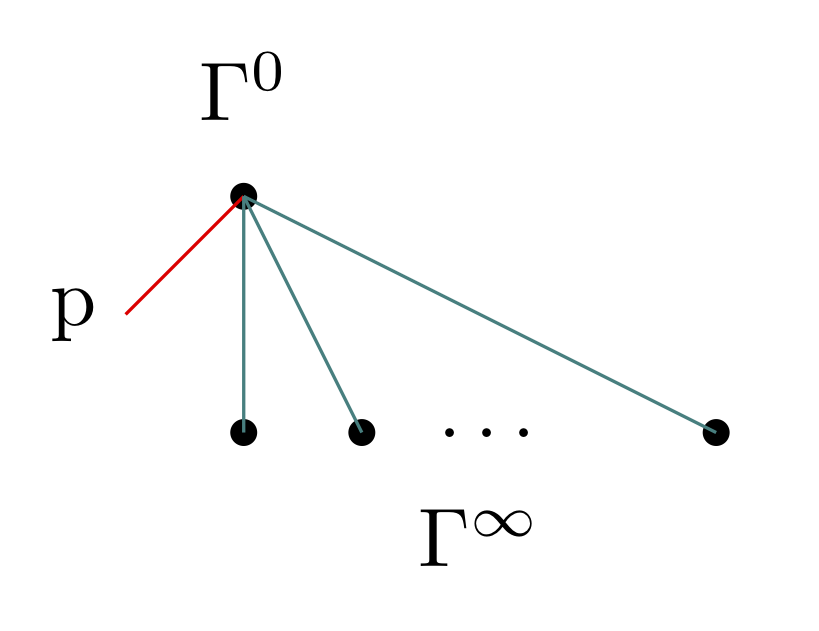}

Note that we suppress $0$-roots, legs, curve classes, etc. in order to make the picture cleaner. To sum it up, if one varies the number of vertices in $\Gamma^\infty$ and distribute decorations between $\Gamma^0, \Gamma^\infty$, we obtain all bipartite graphs $\fG$ that might contribute nontrivially to the relative Gromov--Witten cycle. Denote such set of bipartite graphs by $\mathcal B_\Gamma'$. For any $\fG\in \mathcal B_\Gamma'$, $C_{\fG}=\prod_{e\in \HE_{n}(\Gamma^0)}d_e$.
As a result, the relative Gromov--Witten cycle of topological type $\Gamma$ is simply
\[
\mathfrak c_\Gamma(X/D) = \sum\limits_{\fG \in \mathcal B_\Gamma'} \dfrac{\prod_{e\in \HE_{n}(\Gamma^0)}d_e}{|\Aut(\fG)|}(\mathfrak t_{\fG})_* \big([\bM_{\fG}]^{\vir}\big).
\]

\end{ex}
\begin{ex}[Relative Gromov--Witten cycle with two negative markings]\label{ex:2neg}
In this case, there are only three kinds of bipartite graphs which give nontrivial contributions to $\mathfrak c_\Gamma(X/D)$:

\includegraphicsdpi{600}{}{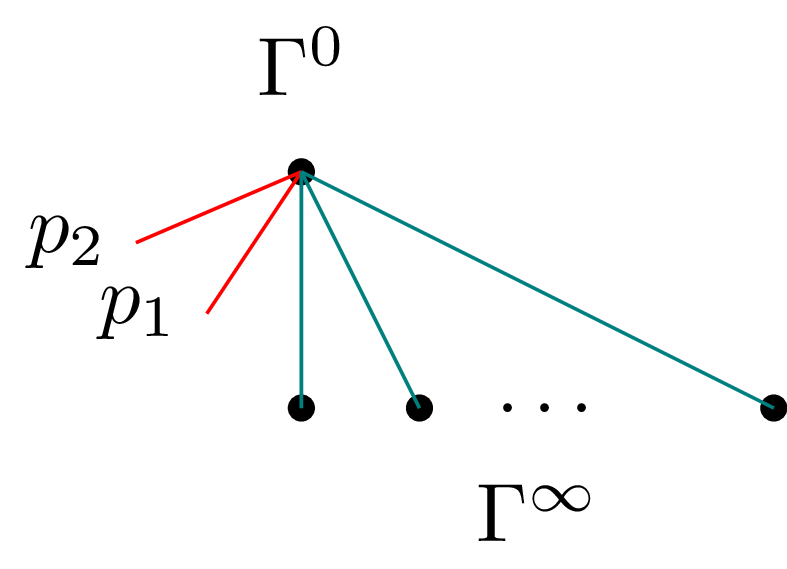}

\includegraphicsdpi{600}{}{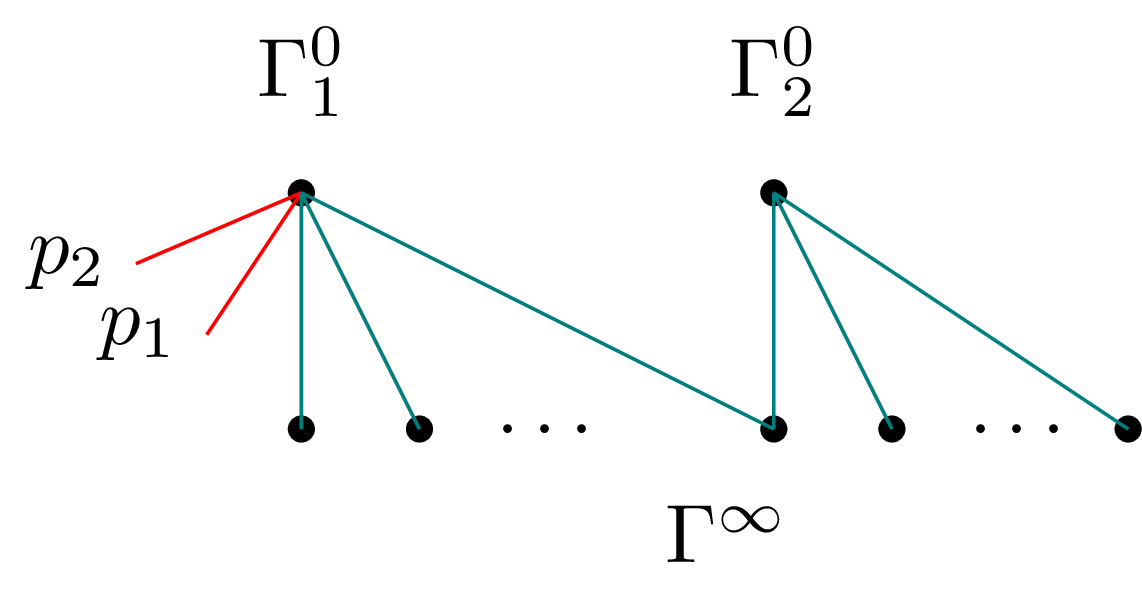}

\includegraphicsdpi{600}{}{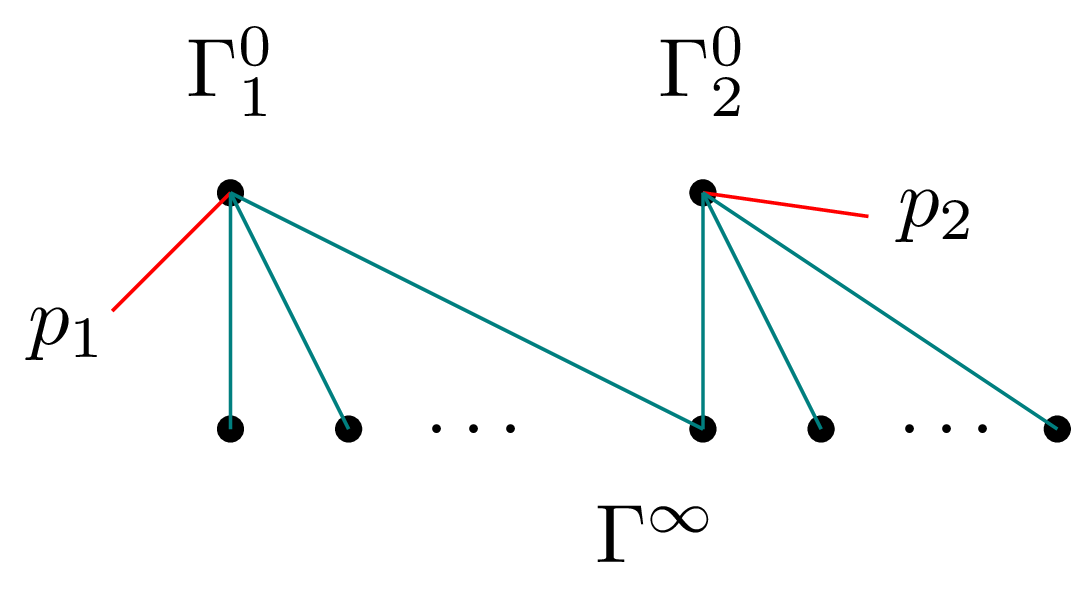}

Similar to Example \ref{ex:1neg}, we suppress $0$-roots, legs, curve classes, etc. By varying the number of vertices on $\Gamma^\infty$ and distributing decorations as before, we obtain all bipartite graphs that might contribute nontrivially.
Again as in Example \ref{ex:1neg}, the sets of bipartite graphs corresponding to the first, second and last pictures will be denoted by $\mathcal{B}_\Gamma^1$, $\mathcal{B}_\Gamma^2$ and $\mathcal{B}_\Gamma^3$, respectively. 
For the first picture, it is easy to compute that the total contribution $\mathfrak c_1$ is given by
\[\sum\limits_{\fG \in \mathcal B_\Gamma^1}\dfrac{\prod_{e\in \HE_{n}(\Gamma^0_1)}d_e}{|\Aut(\fG)|}(\mathfrak t_{\fG})_* \bigg(\Big(c(1)+\sum_{e\in \HE_{n}(\Gamma^0)}(d_e\bar\psi_e-ev_e^*D)-\Psi\Big)\cap[\bM_{\fG}]^{\vir}\bigg).\]
As for the second and last pictures, their total contributions are
\[\mathfrak c_i=\sum\limits_{\fG \in \mathcal B_\Gamma^i}\dfrac{\prod_{e\in \HE_{n}(\Gamma^0_1)}d_e\prod_{e\in \HE_{n}(\Gamma^0_2)}d_e}{|\Aut(\fG)|}(\mathfrak t_{\fG})_* \big([\bM_{\fG}]^{\vir}\big),\quad i=2,3.\]
So we have $\mathfrak c_\Gamma(X/D)=\sum_{i=1}^3 \mathfrak c_i.$
\end{ex}

\subsection{Relative invariants with negative contact orders}\label{rel-inv2}
Similar to Section \ref{rel-inv1}, we can define relative invariants by integrations against this cycle. For completeness, we state this straightforward definition of relative invariants, though most of the content is similar or parallel to Section \ref{rel-inv1}.

Let 
\begin{align*}
    \begin{split}
        \ualpha = (\bar\psi^{a_1}\alpha_1,\ldots,\bar\psi^{a_n}\alpha_n) 
        &\in (\C[\bar\psi] \otimes H^*(X))^{n},\\ \uepsilon = (\bar\psi^{b_1}\epsilon_1,\ldots,\bar\psi^{b_{\rho}}\epsilon_\rho) &\in (\C[\bar\psi] \otimes H^*(D))^{\rho}.
    \end{split}
\end{align*}
There are evaluation maps from $\bM_\fG$ corresponding to interior markings and relative markings
\begin{align*}
\ev_X=(\ev_{X,1},\ldots,\ev_{X,n}):\bM_\fG&\rightarrow X^n, \\
\ev_D=(\ev_{D,1},\ldots,\ev_{D,\rho}):\bM_\fG&\rightarrow D^\rho.
\end{align*}
Recall we have the stabilization map $t_\fG:\bM_\fG\rightarrow \bM_{0,n+\rho}(X,\beta)\times_{X^{\rho}} D^{\rho}$. There are also evaluation maps
\begin{align*}
\overline{\ev}_X=(\overline\ev_{X,1},\ldots,\overline\ev_{X,n}):\bM_{0,n+\rho}(X,\beta)\times_{X^{\rho}} D^{\rho}&\rightarrow X^n, \\
\overline{\ev}_D=(\overline\ev_{D,1},\ldots,\overline\ev_{D,\rho}):\bM_{0,n+\rho}(X,\beta)\times_{X^{\rho}} D^{\rho}&\rightarrow D^\rho,
\end{align*}
such that 
\[
\overline{\ev}_X\circ t_\fG=\ev_X, \quad \overline{\ev}_D\circ t_\fG=\ev_D.
\]

\begin{defn}\label{rel-inv-neg}
The {\it relative Gromov--Witten invariant of topological type $\Gamma$ with insertions $\uepsilon,\ualpha$} is
\[
\langle \uepsilon \mid \ualpha \rangle_{\Gamma}^{(X,D)} =  \displaystyle\int_{\mathfrak c_\Gamma(X/D)} \prod\limits_{j=1}^{\rho} \bar{\psi}_{D,j}^{b_j}\overline{\ev}_{D,j}^*\epsilon_j\prod\limits_{i=1}^n \bar{\psi}_{X,i}^{a_i}\overline{\ev}_{X,i}^*\alpha_i,
\]
where $\bar\psi_{D,j}, \bar\psi_{X,i}$ are pullback of psi-classes from $\bM_{0,n+\rho}(X,\beta)$ to $\bM_{0,n+\rho}(X,\beta)\times_{X^{\rho}} D^{\rho}$ corresponding to markings evaluated under $\overline\ev_D,\overline\ev_X$.
\end{defn}

\section{Matching the orbifold definition with the graph sum}\label{sec:loc}

In this section, we show that the two definitions coincide. As a corollary, relative Gromov--Witten invariants satisfy properties including string, divisor, dilaton, TRR and WDVV equations.

We will prove Theorem \ref{thm:limitexist} (Gromov--Witten cycles are independent of $r$ in orbifold definition) as well as the following comparison between the two definitions.
\begin{thm}\label{thm:compare}
The two definitions of Gromov--Witten cycles agree. More precisely, fix a topological type $\Gamma=(0,n,\beta,\rho,\vec{\mu})$. For $r\gg 1$, we have the following equality between cycle classes
\[
\mathop{\mathrm{lim}}_{r\rightarrow \infty} r^{\rho_-}\tau_*([\bM_\Gamma(X_{D,r})]^{\vir}) = \sum\limits_{\fG \in \mathcal B_\Gamma} \dfrac{1}{|\Aut(\fG)|}(\mathfrak t_{\fG})_* ({\iota}^* C_{\fG} \cap [\bM_{\fG}]^{\vir})
\]
\end{thm}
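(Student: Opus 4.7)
The plan is to follow the strategy of \cites{TY18, TY}. I would degenerate the target $X_{D,r}$ to the normal cone along $D$, obtaining $X\cup_D Y_r$, where $Y_r$ is the $r$-th root stack along $D_0$ of the projective completion $\Pp_D(\sO\oplus N_{D/X})$. Applying the degeneration formula of \cite{Jun2} to $r^{\rho_-}\tau_*[\bM_\Gamma(X_{D,r})]^{\vir}$ splits it as a graph sum with contributions from $X/D$ and from $Y_r$. On the $X$ side, all gluing markings carry trivial orbifold structure, so orbifold maps reduce to relative stable maps to $(X,D)$ with positive contact orders; on the $Y_r$ side one keeps all of the original relative markings, including those of negative contact (which appear as orbifold markings with large age $(r+\mu_i)/r$). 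Combinatorially this distribution already matches the shape of Definition \ref{defn:locgraph}, with $\Gamma^\infty$ assembled from the $X$ side and $\mS_0$ assembled from connected components on the $Y_r$ side.

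Next I would run $\GM$-virtual localization on the $Y_r$ side with respect to the scaling action on the fibers. The fixed loci are indexed by localization bipartite graphs in the sense of Definition \ref{def:loc-graph}: stable components land entirely over $D_0$ or entirely over $D_\infty$, joined by rational chains that stabilize to rubber targets. Components over $D_\infty$ furnish the relative factor $\bM^\bullet_{\Gamma^\infty}(X,D)$ in \eqref{eqn:fiberprod} once glued back to the $X$ side, while components over $D_0$ furnish moduli of orbifold stable maps to the gerbe $\cD_0=\sqrt[r]{D/N_{D/X}}$. The edge-weight factors $(t+\ev_e^*D)/d_e-\bar\psi_e$ appearing in the denominator of \eqref{neg-rel-0} are exactly the smoothing normal bundles at the edge nodes, and the numerator involving $c(l)$ arises from the target $\psi$-class at $D_\infty$ via Lemma \ref{lem:psi}; together they reconstruct the class $C_\fG$ of \eqref{eqn:cg} once the non-equivariant nature of the left-hand side forces us to extract the $t^0$ coefficient.

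The hard part is the $D_0$-contribution at each vertex $\Gamma^0_i$: one must identify the $r\to\infty$ limit of the Hurwitz--Hodge cycle on orbifold stable maps to $\cD_0$, multiplied by the appropriate power of $r$ accounting for its large-age markings, with the pushforward of the rubber cycle $[\bM^\sim_{\Gamma^0_i}(D)]^{\vir}$ weighted by the numerator of \eqref{neg-rel-0}. This is the content of the key Hurwitz--Hodge-to-rubber identity proved in the Appendix; it simultaneously establishes the polynomiality in $r$ needed for Theorem \ref{thm:limitexist} and pins down the leading coefficient as the rubber contribution. Assembling the vertex contributions via the fiber-product diagram \eqref{eqn:diag}, matching automorphism factors against $|\Aut(\fG)|$, and recognizing that $X$-side components glue to $D_0$-side components through edges in $E$ then reproduces Definition \ref{def-rel-cycle}, completing the identification.
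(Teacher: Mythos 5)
Your proposal follows essentially the same route as the paper: degeneration of $X_{D,r}$ to the normal cone of $\cD_0$, $\GM$-virtual localization on the relative local model $(P_{D_0,r},\cD_\infty)$, and the Hurwitz--Hodge-to-rubber identity of the Appendix to convert the $D_0$-vertex contributions, with the $t^0$-coefficient extraction recovering $C_{\fG}$. One small misattribution: the numerator $c(l)$ in $C_{\Gamma_i^0}(t)$ comes from Lemma \ref{lemma:HHI}, not from Lemma \ref{lem:psi}; the latter is used only to rewrite $C_{\Gamma^\infty}(t)=t/(t+\Psi)$ as $1+\Psi/(-t-\Psi)$ and thereby absorb the $\infty$-side rubber factors of the localization into $\bM^{\bullet}_{\Gamma^\infty}(X,D)$.
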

A very brief outline of the proof is as follows. We first use degeneration formula to reduce the theorem to the case when $X$ is a $\Pp^1$-bundle and $D$ is an invariant section. Next, we apply localization formula to the $\Pp^1$-bundle under fiberwise action, and express it as a graph sum. We shall see that the graph sum is independent of $r$ for large $r$, thus proving Theorem \ref{thm:limitexist}. By a more precise analysis, we can also identify the localization formula with the right-hand side of the equation in Theorem \ref{thm:compare}.

\subsection{Degeneration formula for the virtual cycle}\label{sec:deg}

First, we need to introduce some notation. Let $L$ be a line bundle over $D$. Define
\[
P:=\Pp_D(\sO\oplus L).
\]
Let $P_{D_0,r}$ be the root stack of $P$ by applying $r$-th root construction along the zero section $D_0$. The coarse moduli of $P_{D_0,r}$ is still $P$.
There is a substack in $P_{D_0,r}$ isomorphic to $\sqrt[r]{D/L}$ that lies over $D_0$. We denote this substack by $\cD_0$. On the other hand, root construction does not modify $D_\infty\in P$. We denote its preimage in $P_{D_0,r}$ by $\cD_\infty$. The $\GM$-action lifts to $P_{D_0,r}$, and $\cD_0, \cD_\infty$ are invariant loci.
In this subsection, we consider the pair $(X,D)$, and focus on the case when $L=N_{D/X}$ is the normal bundle of $D\subset X$. 

We consider the degeneration of $X_{D,r}$ to the normal cone of $\cD_{0}$:
\begin{equation}\label{deg-root-stack}
X_{D,r}\leadsto P_{D_0,r} \cup_D X.
\end{equation}
Here the union glues together the infinity divisor $\cD_\infty\subset P_{D_0,r}$ and the divisor $D\subset X$. More explicitly, the degeneration (\ref{deg-root-stack}) can be obtained by first degenerating $X$ to the normal cone of $D$ (as a blow-up of $X\times \A^1$), and then taking the $r$-th root stack along the strict transform of $D\times \A^1$.

In \cite{Jun1}*{Definition 4.11}, admissible triples $(\Gamma_1,\Gamma_2,I)$ are introduced ($\Gamma_1,\Gamma_2$ are admissible graphs with matching roots). Suppose that there are $k_1, k_2$ legs in $\Gamma_1, \Gamma_2$, respectively. $I$ is an order preserving inclusion $I:[k_1]\rightarrow [k_1+k_2]$. In order to state the degeneration formula, $\Gamma_1,\Gamma_2$ can be glued along the roots, and $I$ is used to reorder the legs from $\Gamma_1,\Gamma_2$. We use $\operatorname{Aut}(\mathfrak i)$ to denote the automorphism group of the gluing of $\Gamma_1,\Gamma_2$. We do not spell out the formal language of defining admissible graphs and the gluing, because they will not be used in the rest of the paper. But we refer interested readers to \cites{Jun1, Jun2} directly. In \cites{Jun1,Jun2}, $\eta$ is used to denote admissible triples. But we would like to reserve $\eta$ for partitions, and use $\mathfrak i=(\Gamma_1,\Gamma_2,I)$ instead.

For an admissible triple $\mathfrak i=(\Gamma_1,\Gamma_2,I)$, let 
\[
\vec{\eta}=(\eta_1,\ldots,\eta_{\rho_\eta})\in (\mathbb Z_{>0})^{\rho_\eta}
\]
be a partition defined by weights of roots in $\Gamma_1$ (or equivalently $\Gamma_2$, since they have matching roots), where $\rho_\eta$ is the length of the partition (that is, number of roots).
We define the morphisms $\tau_{\mathfrak i}$ and $\Delta$ as follows. 
First, define $\bM_{\Gamma,\mathfrak i}$ by the Cartesian diagram
\begin{equation}
\xymatrix{
\bM_{\Gamma,\mathfrak i} \ar[r]^{} \ar[d] & \bM_{\Gamma_1}^{\bullet}(P_{D_0,r},\cD_\infty)\times \bM_{\Gamma_2}^{\bullet}(X,D) \ar[d] \\
D^{\rho_\eta} \ar[r]^{\Delta} & (D\times D)^{\rho_\eta}.
}                                 
\end{equation}
Let
\[
\tau_\mathfrak i:\bM_{\Gamma,\mathfrak i}\rightarrow \bM_{0,n+\rho}(X,\beta)\times_{X^{\rho}} D^{\rho}
\]
be the forgetful map.
The degeneration formula is proven by using the moduli stack of stable morphisms to the stack of expanded degenerations (under the notation $\mathfrak M(\mathfrak W,\Gamma)$ in \cite{Jun2}). The total space of \eqref{deg-root-stack} admits a morphism to $X$ (mapping to its coarse moduli, contracting the exceptional divisor and then projecting). This induces a morphism from the moduli stack to $\bM_{0,n+\rho}(X,\beta)\times_{X^{\rho}} D^{\rho}$ since markings with orbifold structures must be sent to $D$. By comparing the general fibers and the fiber at $0$, one obtains the following version of the degeneration formula in the Chow group $A_*(\bM_{0,n+\rho}(X,\beta)\times_{X^{\rho}} D^{\rho})$:
\begin{align}\label{equ-deg}
\begin{split}
    &\tau_*[\bM_\Gamma(X_{D,r})]^{\vir} \\
    \sim&\sum \dfrac{\prod (\eta_i)}{|\operatorname{Aut}(\mathfrak i)|} \tau_{\mathfrak i*}\Delta^!([\bM_{\Gamma_1}^{\bullet}(P_{D_0,r},\cD_\infty)]^{\vir}\times [\bM_{\Gamma_2}^{\bullet}(X,D)]^{\vir}),
\end{split}
\end{align}
where $\sim$ means rational equivalence, and the sum ranges over all admissible triples $\mathfrak i=(\Gamma_1,\Gamma_2,I)$ .
Here $|\operatorname{Aut}(\mathfrak i)|$ is the order of the automorphism group $\operatorname{Aut}(\mathfrak i)$.

Theorem \ref{thm:compare} then reduces to the localization analysis on equivariant virtual cycles $[\bM_{\Gamma_1}^{\bullet}(P_{D_0,r},\cD_\infty)]^{\vir}_{\mathbb{C}^*}$ in the next section. 
The pair $(P_{D_0,r},\cD_\infty)$ is called \emph{the relative local model} for $X_{D,r}$ in \cite{TY}. 

\begin{rmk}
It is worth mentioning that our presentation is slightly different from the presentation in \cite{TY}. However, two presentations are essentially equivalent. The authors of \cite{TY} were emphasizing on the idea of reducing the comparison between relative and orbifold invariants to relative local models. Therefore, they also wrote down the degeneration formula for relative invariants and compared it with the degeneration formula for orbifold invariants. Then, the comparison between relative and orbifold invariants reduced to the computation of invariants of the relative local model $(P_{D_0,r},\cD_\infty)$ as well.
\end{rmk}

\subsection{Localization of the relative local model}\label{sec:localmodel}
In the case of relative local models, we have to consider stable maps with orbifold structures on $\cD_0$ as well as tangency conditions on $\cD_\infty$. A topological type of such stable maps should contain more information. First, we must have $g,n,\beta$ as before. Recall that $n$ is the number of interior markings without orbifold structures. Denote
\[
\displaystyle d_0=\int_\beta D_0, \quad d_\infty=\int_\beta D_\infty .
\]
Let $\vec{\mu}=(\mu_1,\ldots,\mu_\rho)$ be a tuple of nonzero integers such that
\begin{align*}
\sum_{i=1}^\rho\mu_i=d_0.
\end{align*}
As in Convention \ref{conv:Gamma}, this notation indicates that we have $\rho$ markings with orbifold structures (thus mapping to $\cD_0$) and the partition $\vec{\mu}$ describes the age information of these markings. Moreover, let $\vec{\eta}=(\eta_1,\ldots,\eta_{\rho_\eta})$ be a tuple of positive integers such that
\[
\sum_{i=1}^{\rho_\eta}\eta_i=d_\infty.
\]
This indicates that we have $\rho_\eta$ relative markings with contact orders given by $\vec{\eta}$. Putting all these ingredients together, the topological type is denoted by $\hat\Gamma=(0,n,\beta,\rho,\vec{\mu}, \rho_\eta,\vec{\eta})$.
We denote the moduli space of relative orbifold stable maps to $(P_{D_0,r},\cD_\infty)$ of topological type $\hat\Gamma$ by $\bM_{\hat\Gamma}^{\bullet}(P_{D_0,r},\cD_\infty)$. We refer to \cites{TY,TY18} for more details about this kind of hybrid stable maps.

There is a map $P_{D_0,r}\rightarrow P$ to the coarse moduli space, and a projection $P\rightarrow D$ to the base of the projective bundle. The composition of these two induces a stabilization map
\[
\tau:\bM_{\hat\Gamma}^{\bullet}(P_{D_0,r},\cD_\infty)\rightarrow \bM_{0,n+\rho+\rho_\eta}^{\bullet}(D).
\]
This map forgets relative and orbifold conditions of stable maps.

We compute the $\GM$-equivariant class 
\begin{equation}\label{eqn:cls}
\tau_*[\bM_{\hat\Gamma}^{\bullet}(P_{D_0,r},\cD_\infty)]^{\vir}_{\GM}
\end{equation}
by the virtual localization formula where $r\gg 1$ (see also \cite[Section 3]{JPPZ18} for more details on the localization analysis).

Let $\bF_{\fG}$ be the union of components which parameterizes $\GM$-invariant relative orbifold stable maps of type $\fG$, where $\fG$ is a localization bipartite graph in Definition \ref{def:loc-graph}. Recall that we allow $\Gamma^\infty$ in $\fG$ to be unstable, that is, each of the vertices has $0$ as its curve class with one $0$-root, one $\infty$-root and nothing else (see Remark \ref{rmk:unstable}). We associate this bipartite graph with the corresponding orbifold stable maps where the target does not degenerate at $\infty$. A precise description of $\bF_{\fG}$ is as follows.
\begin{itemize}
\item If the target degenerates at $\infty$, $\bF_\fG$ admits an \'etale cover by
\[
\prod\limits_{\Gamma^0_i\in \mS_0}\bM_{\Gamma^0_i}(\cD_0) \times_{D^{|E|}} \bM^{\bullet\sim}_{\Gamma^\infty}(D)
\]
where $\times_{D^{|E|}}$ is the fiber product identifying evaluation maps according to edges (similar to \eqref{eqn:fiberprod}). If $\Gamma^0_i$ is unstable (see Definition \ref{def:loc-graph}), we simply set $\bM_{\Gamma^0_i}(\cD_0)=D$. In this case, the virtual cycle  $[\bF_\fG]^{\vir}$ becomes
\[\frac{1}{\prod_{e\in \HE_{0}(\Gamma^\infty)}d_e}\Delta^!\left(\prod\limits_{\Gamma^0_i\in \mS_0}[\bM_{\Gamma^0_i}(\cD_0)]^{\vir}\times [\bM^{\bullet\sim}_{\Gamma^\infty}(D)]^{\vir}\right).\]
Since roots of node type form edges, those $d_e$ are given by the absolute values of their weights. You may think these $d_e$ as ``multiplicities of the edges".

\item If the target does not degenerate at $\infty$, $\bF_{\fG}$ admits an \'etale cover by
\[
 \prod\limits_{\Gamma^0_i\in \mS_0}\bM_{\Gamma^0_i}(\cD_0).
\]
In this case,  
\[[\bF_\fG]^{\vir}=\frac{1}{\prod_e d_e}\prod\limits_{\Gamma^0_i\in \mS_0}[\bM_{\Gamma^0_i}(\cD_0)]^{\vir}.\]
\end{itemize}

The localization residue of the graph $\fG$ can be described schematically as follows:
\[
\operatorname{Res}_{\fG}=\left(\prod\limits_{\Gamma^0_i\in \mS_0} \operatorname{Cont}(\Gamma^0_i)\right)\operatorname{Cont}(\Gamma^\infty).
\]
Note that we do not have edge contributions since the degree $\frac{d_e}{r}$ of $f^*T(-\cD_\infty)$ ($T$ denotes the relative tangent bundle of the projection $P_{D_0,r}\rightarrow D$) is less than 1 for $[f]\in \bF_{\fG}$ (see \cite{JPT}*{Section 2.2} or \cite{JPPZ18}*{Section 3.4}).

The virtual localization formula is 
\begin{align}\label{eqn:locform}
\begin{split}
    &\tau_*[\bM_{\hat\Gamma}(P_{D_0,r},\cD_\infty)]^{\vir}_{\GM}\\
    =&\sum_\fG \frac{1}{|\operatorname{Aut}(\fG)|}\tau_*(\operatorname{Res}_\fG\cap[\bF_\fG]^{\vir}).
\end{split}
\end{align}
 
On the right-hand side of \eqref{eqn:locform}, we slightly abuse the notation by using $\tau$ to mean the restriction of $\tau$ on the corresponding $\bF_\fG$. If the target does not degenerate at $\infty$, the contribution at $\infty$ is trivial:
\[
\operatorname{Cont}(\Gamma^\infty)=1.
\]
If the target degenerates at $\infty$, we have
\begin{align*}
\operatorname{Cont}(\Gamma^\infty)=& \frac{\prod_{e\in \HE_{0}(\Gamma^\infty)}d_e}{-t-\Psi_0},
\end{align*}
where $\Psi_0$ is the divisor corresponding to the cotangent line bundle determined by the relative divisor on $0$ side. Comparing with $[\bF_\fG]^{\vir}$ in \eqref{eqn:locform}, we see that the factor $\prod_{e\in \HE_0(\Gamma^\infty)}d_e$ is cancelled.

Let $v_i$ be the single vertex of $\Gamma^0_i$ which is stable. We write $E_i$ for the set of edges associated with the vertex $v_i$ and write $\rho_{-}(i)$ for the number of $\infty$-roots of marking type (corresponding to large age markings) associated with $v_i$. The localization contribution is
\begin{align}\label{loc-con-0}
\begin{split}
& \operatorname{Cont}(\Gamma^0_i)\\
=&\prod_{e\in E_i}\frac{rd_e}{(t+\ev_e^*c_1(L))-d_e\bar{\psi}_e}\left(\sum_{j\geq 0}(t/r)^{|E_i|+\rho_{-}(i)-1-j}c_{j}(-R^*\pi_*\mathcal L_r)\right)\\
= &\frac{\prod_{e\in E_i}d_e}{\prod_{e\in E_i}(t+ \ev_e^*c_1(L)-d_e\bar{\psi}_e)}\left(\sum_{j\geq 0}\frac{t^{|E_i|+\rho_{-}(i)-1-j}}{r^{\rho_{-}(i)-1-j}}c_{j}(-R^*\pi_*\mathcal L_r)\right),
\end{split}
\end{align}
where 
\begin{itemize}
    \item the factor
\[
\frac{rd_e}{(t+\ev_e^*c_1(L))-d_e\bar{\psi}_e}=\frac{1}{\frac{(t+\ev_e^*c_1(L))}{rd_e}-\frac{\bar{\psi}_e}{r}}
\]
comes from the inverse normal bundle of smoothing the node connecting the edge $e$ and the vertex $v_i\in\Gamma^0_i$,
\item and the morphism
\[
\pi: \mathcal C_{\Gamma^0_i}(\cD_0)\rightarrow \bM_{\Gamma^0_i}(\cD_0)
\]
 is the projection from the universal curve. There is a universal $r$-th root bundle $L_r$ over the gerbe $\cD_0$ and the evaluation map 
\[
f:\mathcal C_{\Gamma^0_i}(\cD_0) \rightarrow \cD_0.
\] Our $\mathcal L_r$ is defined as 
\[\mathcal L_r=f^*L_r.\]
\end{itemize}

The virtual rank of $-R^*\pi_*\mathcal L_r\in K^0(\bM_{\Gamma^0_i}(\cD_0))$ is $|E_i|+\rho_{-}(i)-1$. Note that when matching with relative invariants, $|E_i|+\rho_{-}(i)$ corresponds to $\rho_\infty(i)$ in (\ref{neg-rel-0}). The vertex contribution is
\[
\sum_{j\geq 0}(t/r)^{|E_i|+\rho_{-}(i)-1-j}c_{j}(-R^*\pi_*\mathcal L_r).
\]

By Lemma \ref{lemma:HHI}, we have
\begin{align*}
&r^{\rho_-(i)}\tau_*\left(\operatorname{Cont}(\Gamma^0_i)\cap [\bM_{\Gamma_i^0}(\cD_0,\beta)]^{\vir}\right)\\
  \notag  =&\tau_*\Bigg(\frac{\left(\prod_{e\in E_i}d_e\right)}{\prod_{e\in E_i}(t+ \ev_e^*c_1(L)-d_e\bar{\psi}_e)}\cdot\\
  \notag & \sum_{j\geq 0}\frac{t^{|E_i|+\rho_-(i)-1-j}}{r^{-1-j}}c_{j}(-R^*\pi_*\mathcal L_r)\cap [\bM_{\Gamma_i^0}(\cD_0,\beta)]^{\vir}\Bigg)\\
  \notag = &\frac{\left(\prod_{e\in E_i}d_e\right)}{\prod_{e\in E_i}(t+ \ev_e^*c_1(L)-d_e\psi_e)}\sum_{j\geq 0}t^{|E_i|+\rho_-(i)-1-j}\\
  \notag  &(\tau_2)_*\left(\left( \Psi_\infty^{j}-\Psi_\infty^{j-1}\sigma_1+\ldots+(-1)^{j}\sigma_{j}\right)\cap [\bM^\sim_{\Gamma_i^0}(D)]^{\vir}\right),
\end{align*}
for $r$ sufficiently large, where $\tau_2$ is the forgetful map from $\bM^\sim_{\Gamma_i^0}(D)$ to the corresponding moduli space of stable maps to $D$ (see the Appendix). Hence, 
\[r^{\rho_-(i)}\tau_*\left(\operatorname{Cont}(\Gamma^0_i)\cap [\bM_{\Gamma_i^0}(\cD_0,\beta)]^{\vir}\right)\]
is simply 
\[(\tau_2)_*\left(C_{\Gamma^0_i}(t)\cap [\bM^\sim_{\Gamma_i^0}(D)]^{\vir}\right).\]
Recall that $C_{\Gamma^0_i}(t)$ is defined in Section \ref{sec:rel-neg-set-up} as follows:
\begin{align*}
C_{\Gamma_i^0}(t)= \dfrac{\sum_{l\geq 0}c(l) t^{\rho_\infty(i)-1-l}}{\prod\limits_{e\in \HE_{n}(\Gamma_i^0)} \big(\frac{t+\ev_e^*D}{d_e}-\bar\psi_e\big) } \in A^*(\bM^\sim_{\Gamma_i^0}(D))[t,t^{-1}],
\end{align*}
where 
\[
c(l)=\Psi_\infty^l-\Psi_\infty^{l-1}\sigma_1+\ldots+(-1)^l\sigma_l.
\]

Now if the single vertex $v_i$ of $\Gamma^0_i$ is unstable, then
by Lemma 12 in \cite{JPPZ18} we know that such unstable vertex could occur only when there is only one $0$-root and one $\infty$-root of node type. So $\rho_-(i)=0$. And in this case, $\operatorname{Cont}(\Gamma^0_i)=1$. 

The coefficient of $t^0$ from the contribution of all vertices is
\begin{align*}
    &\left[\left(\prod_{\Gamma^0_i}r^{\rho_-(i)}\operatorname{Cont}(\Gamma^0_i)\right)\operatorname{Cont}(\Gamma^\infty)\right]_{t^0},
\end{align*}
where $[]_{t^0}$ means taking the coefficient of $t^0$.
Therefore,
\begin{align}\label{equ:rel-local}
\begin{split}
    &r^{\rho_-}\tau_*\left([\bM_{\hat\Gamma}(P_{D_0,r},\cD_\infty)]^{\vir} \right)\\
 =&\sum_{\fG}\frac{1}{|\operatorname{Aut}(\fG)|}\tau_*\left( \left[\left(r^{\rho_-(i)}\prod\limits_{\Gamma^0_i\in \mS_0} \operatorname{Cont}(\Gamma^0_i)\right) \operatorname{Cont}(\Gamma^\infty)\right]_{t^0} \cap[\bF_\fG]^{\vir}\right).
\end{split}
\end{align}

Theorem \ref{thm:compare} follows from Definition \ref{def-rel-cycle} of relative cycle $\mathfrak c_\Gamma(X/D)$, the degeneration formula (\ref{equ-deg}) and the identity (\ref{equ:rel-local}). More precisely, the localization contribution at $0$ corresponds to (\ref{neg-rel-0}). The localization contribution at $\infty$ together with the virtual class $[\bM_{\Gamma_2}^{\bullet}(X,D)]^{\vir}$ from the degeneration formula (\ref{equ-deg}) corresponds to (\ref{neg-rel-infty}). Indeed, the class $C_{\Gamma^\infty}(t)$ in (\ref{neg-rel-infty}) should actually be read as follows in this context:
\begin{align}\label{C_gamma^infty}
C_{\Gamma^\infty}(t)=\dfrac{t}{t+\Psi}=1+\frac{\Psi}{-t-\Psi}\in A^*(\bM^\bullet_{\Gamma^\infty}(X,D))[t^{-1}]
\end{align}
where the $\Gamma^\infty$ in (\ref{neg-rel-infty}) should be seen as the gluing of $\Gamma^\infty$ in (\ref{equ:rel-local}) with $\Gamma_2$ along the corresponding roots. The term $1$ in (\ref{C_gamma^infty}) corresponds to the case when the target does not degenerate at $\infty$ in localization computation. The term $\frac{\Psi}{-t-\Psi}$ corresponds to the case when the target degenerates at $\infty$ in localization computation. By Lemma \ref{lem:psi}, the $\Psi$-class in the numerator of (\ref{C_gamma^infty}) is linear equivalent to a cycle on $\bM^\bullet_{\Gamma^\infty}(X,D)$ which is supported on relative stable maps whose targets degenerate at least once. In this way, the localization contribution at $\infty$ and the virtual class $[\bM_{\Gamma_2}^{\bullet}(X,D)]^{\vir}$ from the degeneration formula (\ref{equ-deg}) are encoded in $C_{\Gamma^\infty}(t)$.

We note that all those graphs of type $0$ in (\ref{neg-rel-0}) are stable while some of the graphs of type $0$ in (\ref{equ:rel-local}) are allowed to be unstable. To match the formulas, we need to contract those edges connected to those unstable vertices.

\section{Relative quantum rings and Givental formalism}\label{sec:formalism}
In this section, we will build a theory of quantum ring and an analog of Givental formalism. We will further describe a version of Virasoro operators as quantized operators. Then Virasoro operators annihilate the genus-zero part of total potential simply due to the axioms of the Lagrangian cones.

Similar to Section \ref{sec:rel-neg} (beginning of Section \ref{sec:rel-neg-set-up}), we abuse the notation by using $D$ for both its divisor class in cohomology $H^2(X)$, and its restriction in $H^2(D)$.

\subsection{The ring of insertions}\label{sec:ins} We first define a vector space containing all possible insertions of the relative theory. It will then be given a ring structure which later becomes the classical limit of the relative quantum product.

Define $\HH_0=H^*(X)$ and $\HH_i=H^*(D)$ if $i\in \Z - \{0\}$.
Let
\[
\HH=\bigoplus\limits_{i\in\Z}\HH_i.
\]
Each $\HH_i$ naturally embeds into $\HH$. For an element $\alpha\in \HH_i$, we denote its image in $\HH$ by $[\alpha]_i$. Define a pairing on $\HH$ by the following.
\begin{equation}\label{eqn:pairing}
\begin{split}
([\alpha]_i,[\beta]_j) = 
\begin{cases}
0, &\text{if } i+j\neq 0,\\
\int_X \alpha\cup\beta, &\text{if } i=j=0, \\
\int_D \alpha\cup\beta, &\text{if } i+j=0, i,j\neq 0.
\end{cases}
\end{split}
\end{equation}
The pairing on the rest of the classes is generated by linearity. We pick a basis $\{T_k\}$ for $H^*(X)$, and a basis $\{\bar T_k\}$ for $H^*(D)$. Using these, we define a basis for $\HH$. Let 
\begin{align*}
\widetilde T_{0,k}&=[T_k]_0, \\
\widetilde T_{i,k}&=[\bar T_k]_i \text{ when } i\neq 0.
\end{align*}
Let $\{T^k\}$ be the dual basis of $\{T_k\}$ under Poincar\'e pairing of $H^*(X)$, and $\{\bar T^k\}$ be the dual basis of $\{\bar T_k\}$ under Poincar\'e pairing of $H^*(D)$. Define
\begin{align*}
\widetilde T_{0}^k&=[T^k]_0, \\
\widetilde T_{i}^k&=[\bar T^k]_i \text{ when } i\neq 0.
\end{align*}
In these definitions, the set $\{\widetilde T_{i,k}\}$ forms a basis of $\HH$, and the set $\{\widetilde T_{-i}^k\}$ forms its dual basis via the pairing just defined. The ranges of $i,k$ is understood in the obvious way. We will not specify their ranges in later text unless there is an extra condition. We want to emphasize that in the pairing of $\HH$, the dual of $\widetilde T_{i,k}$ is $\widetilde T_{-i}^k$. Note the negative sign on $i$ via the dualization.

In order to define a ring structure, we first describe a tri-linear form $A$ generated by the following.
\begin{equation}\label{eqn:trilinear}
    A([\alpha]_{i},[\beta]_j,[\gamma]_l)=
    \begin{cases}
        0 &\text{if } i+j+l\neq 0, \\
        \int_X \alpha\cup\beta\cup\gamma &\text{if } i=j=l=0, \\
        \int_D \alpha\cup\beta\cup\gamma &\text{if } i+j+l=0 \text{ and only one of $i,j,l$ is negative}, \\
        \int_D D\cup\alpha\cup\beta\cup\gamma &\text{if } i+j+l=0 \text{ and two of $i,j,l$ are negative}.
    \end{cases}
\end{equation}
In the third case, if one of $i,j,k$ is $0$, we assume the corresponding class is restricted to $D$ before the integration. 
\begin{defn}
Define a ring structure on $\HH$ by
\begin{equation*}
    [\alpha]_i\cdot [\beta]_j = \sum\limits_{l,k} A([\alpha]_i,[\beta]_j,\widetilde T_{l,k})\widetilde T_{-l}^k.
\end{equation*}
\end{defn}

In terms of computation, readers can refer to the following description of the product structure. With a slight abuse of notation, we set $\iota:D\rightarrow X$ to be the inclusion. And we use $\iota_!$ to denote the Gysin pushforward.
By a straightforward computation, one can show that
\[
[\alpha]_i \cdot [\beta]_j =
\begin{cases}
[\iota^*\alpha\cup\beta]_{i+j} &\text{if } i=0,j\neq 0, \\
[\alpha\cup\iota^*\beta]_{i+j} &\text{if } i\neq 0, j=0, \\
[\iota_!(\alpha\cup\beta)]_{i+j} &\text{if } i,j\neq 0 \text{ and } i+j = 0, \\
[\alpha\cup \beta]_{i+j} &\text{if } i,j\neq 0, i+j<0 \text{ and one of $i,j$ is positive},\\
[D\cup\alpha\cup\beta]_{i+j} &\text{if } i,j\neq 0, i+j>0 \text{ and one of $i,j$ is negative,}\\
[\alpha\cup\beta]_{i+j} &\text{if } i,j>0, \\
[D\cup \alpha\cup \beta]_{i+j} &\text{if } i,j<0.
\end{cases}
\]
In the product, $\HH$ is in fact a bigraded ring. One obvious grading is to regard a class $[\alpha]_i$ as degree $i$. We choose the notation to be the following.
\begin{equation}\label{eqn:deg1}
\deg^{(1)}([\alpha]_i)=i.
\end{equation}
To define the other grading, suppose that $\alpha\in \HH_i$ is a cohomology class of (real) degree $d$ (that is, in $H^d(X)$ if $i=0$, or in $H^d(D)$ if $i\neq 0$). Define
\begin{equation}\label{eqn:deg2}
\deg^{(2)}([\alpha]_i)=
\begin{cases}
d/2 &\text{if } i\geq 0,\\
d/2+1 &\text{if } i<0.
\end{cases}
\end{equation}
One can easily check that the product preserves this grading as well.

\begin{rmk}\label{rmk:heuristic}
This product structure and the second grading has a complicated and unnatural look. 
But there is a natural way to think of this product if we heuristically ignore cohomology classes in $H^*(D)$ that are not restrictions of classes in $H^*(X)$.  For an integer $i>0$, we regard $\HH_{-i}$ as the image $\iota_! H^*(D)$ inside $H^*(X)$. By our heuristic assumption, an element of it can always be written as $\iota_!(\iota^*\alpha)=D\cup\alpha\in H^*(X)$ for some $\alpha\in H^*(X)$. 
 
Consider the case when an element $\iota_!(\iota^*\alpha)\in\HH_{<0}$ multiplies an element $\beta\in\HH_{\geq 0}$. If the answer lies in $\HH_{\geq 0}$, we have
\[
\iota_!(\iota^*\alpha) \cup \beta = D\cup\alpha\cup\beta \in \HH_{\geq 0}.
\]
If the answer lies in $\HH_{<0}$, we have
\[
\iota_!(\iota^*\alpha)\cup\beta=\iota_!(\iota^*\alpha\cup\beta) \in \iota_!\HH_{<0}.
\]
Thus, we no longer have the extra $D$.
When two classes $\iota_!(\iota^*\alpha), \iota_!(\iota^*\beta)\in \HH_{<0}$ multiply together, we have
\[
\iota_!(\iota^*\alpha)\cup \iota_!(\iota^*\beta)=D\cup D\cup\alpha\cup\beta=\iota_!(D\cup\iota^*\alpha\cup\iota^*\beta)\in \iota_!\HH_{<0}.
\]
This also explains the shifting of $\deg^{(2)}$ in $\HH_{<0}$ in terms of the Gysin pushforward.
\end{rmk}
\subsection{A different notation of relative invariants}
In order to match with the ring of insertions $\HH$, we need a different notation of relative Gromov--Witten invariants with descendants. To emphasize its difference with Definition \ref{rel-inv-neg}, we use a more traditional notation $I_\beta(\ldots)$ appeared in, for example, \cite{FP}.

Given a curve class $\beta$ and insertions $[\alpha_1]_{i_1}\bar\psi^{a_1}, \ldots, [\alpha_n]_{i_n}\bar\psi^{a_n}$, we define a connected topological type $\Gamma$ with $n$ half-edges according to the following. 
\begin{itemize}
    \item $g=0$ and the curve class is $\beta$.
    \item If $i_l=0$, the $l$th half-edge is a leg.
    \item If $i_l\neq 0$, the $l$th half-edge is a root of weight $i_l$.
    \item $\rho$ is the number of nonzero elements in $i_1,\ldots,i_n$. 
    \item Say, $i_{r_1},\ldots,i_{r_\rho}$ are all the nonzero elements preserving orders. Then $\vec{\mu}=(i_{r_1},\ldots,i_{r_\rho})$.
\end{itemize}
\begin{defn}\label{defn:inv}
Under such notation, $I_\beta$ is defined to be a multilinear form over $\HH[\bar\psi]$ generated by the following equation.
\[
I_\beta(\bar\psi^{a_1}[\alpha_1]_{i_1}, \ldots, \bar\psi^{a_n}[\alpha_n]_{i_n}) = \displaystyle\int_{\mathfrak c_\Gamma(X/D)} \prod\limits_{l=1}^n\bar\psi^{a_l}_l\ev_l^*\alpha_l.
\]
If the condition
\begin{equation}\label{eqn:tangcond}
\sum_{l=1}^n i_l = \int_\beta D
\end{equation}
is not satisfied, then $I_\beta(\bar\psi^{a_1}[\alpha_1]_{i_1}, \ldots, \bar\psi^{a_n}[\alpha_n]_{i_n})$ is defined to be $0$.
\end{defn}
Here the evaluation map $ev_l$ corresponding to the $l$th marking lands on $D$ if $i_l\neq 0$ and on $X$ if $i_l=0$ (see Definition \ref{rel-inv-neg1} for more details).

By defining invariants this way, we get rid of graph notation and incorporate the contact order information into insertions. In this notation, TRR and WDVV equation can be stated in nice ways. For simplicity, we assume that only cohomology classes of even degrees are used.
\begin{prop}[TRR]\label{prop:TRR}
\begin{align*}
    &I_\beta(\bar\psi^{a_1+1}[\alpha_1]_{i_1}, \ldots, \bar\psi^{a_n}[\alpha_n]_{i_n}) \\
    =&\sum I_{\beta_1}(\bar\psi^{a_1}[\alpha_1]_{i_1}, \prod\limits_{j\in S_1} \bar\psi^{a_j}[\alpha_j]_{i_j}, \widetilde T_{i,k}) I_{\beta_2}(\widetilde T_{-i}^k, \bar\psi^{a_2}[\alpha_2]_{i_2}, \bar\psi^{a_3}[\alpha_3]_{i_3}, \prod\limits_{j\in S_2} \bar\psi^{a_j}[\alpha_j]_{i_j}),
\end{align*}
where the sum is over all $\beta_1+\beta_2=\beta$, all indices $i,k$ of basis, and $S_1, S_2$ disjoint sets with $S_1\cup S_2=\{4,\ldots,n\}$. Also, the $\prod$ symbol makes each factor as a separate insertion, instead of multiplying them up.
\end{prop}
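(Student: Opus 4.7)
The plan is to derive Proposition \ref{prop:TRR} by transporting the standard topological recursion relation in orbifold Gromov--Witten theory across the identification of Theorem \ref{thm:compare}. By Definition \ref{def:cycle}, $\mathfrak c_\Gamma(X/D) = \lim_{r\to\infty} r^{\rho_-}\tau_*[\bM_\Gamma(X_{D,r})]^{\vir}$, so for every $r\gg 1$ each side of the stated identity can be written as $r^{\rho_-}$ times an integral against the orbifold virtual cycle on $\bM_\Gamma(X_{D,r})$ for the appropriate $\Gamma$. Since orbifold Gromov--Witten invariants of $X_{D,r}$ form a cohomological field theory, they satisfy the usual TRR: the class $\bar\psi_1$ (pulled back from $\bM_{0,n+\rho}(X,\beta)$ via $\tau$) equals a sum of boundary divisors separating marking $1$ along with $2,3$ from the rest, and summing over these boundary strata with the Chen--Ruan pairing at the node yields
\[
I_\beta^{r}(\bar\psi^{a_1+1}\alpha_1,\ldots) = \sum_{\mu,\nu} I_{\beta_1}^{r}(\ldots,\phi_\mu)\,g^{\mu\nu}\,I_{\beta_2}^{r}(\phi_\nu,\ldots),
\]
where $\{\phi_\mu\}$ is a basis of Chen--Ruan cohomology of $X_{D,r}$ and $g^{\mu\nu}$ is its inverse pairing.

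Next, I would identify this orbifold splitting with the pairing on $\HH$. The inertia stack $\underline{I}(X_{D,r})$ decomposes as $X$ (age $0$) together with one copy of $D$ for each age $k/r$, $k=1,\ldots,r-1$. By Convention \ref{conv:Gamma}, age $k/r$ with $k$ bounded as $r\to\infty$ corresponds to positive contact $i=k$, while age $(r-k)/r$ with $k$ bounded corresponds to negative contact $i=-k$. The Chen--Ruan pairing of twisted sectors at ages $k/r$ and $(r-k)/r$, restricted to the pushforward under $\tau$, agrees with the pairing \eqref{eqn:pairing} on $\HH_{k}\oplus\HH_{-k}$ up to a factor of $1/r$ coming from the $\mu_r$-gerbe structure of each twisted component. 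This factor exactly cancels the extra power of $r$ produced by the $r^{\rho_-}$ rescaling: when the node carries a large-age sector, one side of it is a negative-contact marking, so the split topological types satisfy $\rho_-^{(1)}+\rho_-^{(2)} = \rho_-^{\mathrm{LHS}}+1$; in the untwisted case $i=0$ neither factor $1/r$ nor the extra $r$ appears, and the identification is consistent. Letting $r\to\infty$, the finite sum over $k\in\{0,\ldots,r-1\}$ converts into the infinite sum over $i\in\Z$, producing exactly the kernel $\sum_{i,k}\widetilde T_{i,k}\otimes \widetilde T_{-i}^{k}$.

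The main obstacle is the careful bookkeeping of $r$-factors: verifying that the rescaling $r^{\rho_-}$, the normalization of the Chen--Ruan pairing on $\mu_r$-gerbes, and the distribution of large-age versus small-age sectors at the node combine coherently in the limit. Once this accounting is settled, applying Theorem \ref{thm:compare} to each factor of every splitting converts the orbifold TRR into the stated identity. Finally, since the $\bar\psi$-classes in Definition \ref{defn:inv} are pulled back from $\bM_{0,n+\rho}(X,\beta)$ along the same forgetful map used to define $\mathfrak c_\Gamma(X/D)$, they agree with the orbifold $\bar\psi$-classes used above, so descendant insertions transfer without modification and the formula takes exactly the form stated in Proposition \ref{prop:TRR}. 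The fact that the topological-type constraint \eqref{eqn:tangcond} is automatic on both sides follows from conservation of $\int_\beta D$ across the node, since the node contact orders $i$ and $-i$ cancel.
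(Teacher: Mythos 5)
Your proposal is correct and follows exactly the route the paper intends: the paper asserts that TRR is a ``direct consequence of Definitions \ref{def:cycle} and \ref{rel-inv-neg1},'' i.e.\ of transporting the standard orbifold TRR for $X_{D,r}$ through the limit $r\to\infty$, which is precisely what you do. Your bookkeeping of the $r$-factors (the $1/r$ in the Chen--Ruan pairing on twisted sectors cancelling the discrepancy $\rho_-^{(1)}+\rho_-^{(2)}=\rho_-+1$ at a twisted node) is the correct verification that the paper leaves implicit.
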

\begin{prop}[WDVV]\label{prop:WDVV}
\begin{align*}
    &\sum I_{\beta_1}(\bar\psi^{a_1}[\alpha_1]_{i_1}, \bar\psi^{a_2}[\alpha_2]_{i_2}, \prod\limits_{j\in S_1} \bar\psi^{a_j}[\alpha_j]_{i_j}, \widetilde T_{i,k}) I_{\beta_2}(\widetilde T_{-i}^k, \bar\psi^{a_3}[\alpha_3]_{i_3}, \bar\psi^{a_4}[\alpha_4]_{i_4}, \prod\limits_{j\in S_2} \bar\psi^{a_j}[\alpha_j]_{i_j}) \\
    =&\sum I_{\beta_1}(\bar\psi^{a_1}[\alpha_1]_{i_1}, \bar\psi^{a_3}[\alpha_3]_{i_3}, \prod\limits_{j\in S_1} \bar\psi^{a_j}[\alpha_j]_{i_j}, \widetilde T_{i,k}) I_{\beta_2}(\widetilde T_{-i}^k, \bar\psi^{a_2}[\alpha_2]_{i_2}, \bar\psi^{a_4}[\alpha_4]_{i_4}, \prod\limits_{j\in S_2} \bar\psi^{a_j}[\alpha_j]_{i_j}),
\end{align*}
where each sum is over all $\beta_1+\beta_2=\beta$, all indices $i,k$ of basis, and $S_1, S_2$ disjoint sets with $S_1\cup S_2=\{5,\ldots,n\}$. Also, the $\prod$ symbol makes each factor as a separate insertion, instead of multiplying them up.
\end{prop}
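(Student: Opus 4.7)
The plan is to derive Proposition \ref{prop:WDVV} by taking a large-$r$ limit of the standard WDVV equation in orbifold Gromov--Witten theory of the root stack $X_{D,r}$, using Definition \ref{rel-inv-neg1} together with Theorem \ref{thm:limitexist}. Each insertion $\bar\psi^{a_j}[\alpha_j]_{i_j}$ translates into an orbifold insertion: if $i_j = 0$ it lies in the untwisted sector $H^*(X)$, and if $i_j \neq 0$ it lies in the twisted sector of age $i_j/r$ (for $i_j>0$) or $(r+i_j)/r$ (for $i_j<0$). Writing the orbifold WDVV for $X_{D,r}$ with these insertions produces an identity with sums over splittings $\beta_1+\beta_2=\beta$, subsets $S_1 \sqcup S_2 = \{5,\dots,n\}$, and a Chen--Ruan-orthogonal basis of $H^*_{CR}(X_{D,r})$.

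Next, I would decompose the basis sum by sector. The untwisted sector reproduces the $i=0$ case of \eqref{eqn:pairing}, while each twisted sector of age $k/r$ paired with age $(r-k)/r$ contributes the Chen--Ruan pairing $\tfrac{1}{r}\int_D \alpha \cup \beta$ coming from the $\mu_r$-banded gerbe structure over $D$; consequently its inverse carries an extra factor $r$ relative to the inverse Poincar\'e pairing on $D$ used to define the duals $\widetilde T_{-i}^k$. The topological constraint \eqref{eqn:tangcond} pins down a unique contact order $i$ at the glued node for each fixed $(\beta_1,\beta_2,S_1,S_2)$, so the basis sum has only finitely many nonzero terms and the infinite sum on the RHS of Proposition \ref{prop:WDVV} is really a finite one.

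I would then multiply both sides of the orbifold WDVV by $r^{\rho_-}$ and redistribute the $r$-powers. The bookkeeping identity is
\begin{equation*}
\rho_-^{(1)} + \rho_-^{(2)} = \rho_- + \epsilon, \qquad \epsilon = \begin{cases} 0, & i = 0, \\ 1, & i \neq 0, \end{cases}
\end{equation*}
which holds because whenever $i \neq 0$, exactly one of the two glued half-markings has negative contact ($\widetilde T_{-i}^k$ if $i>0$, and $\widetilde T_{i,k}$ if $i<0$), adding one to the combined large-age count. The extra $r$ from the twisted Chen--Ruan pairing cancels precisely this shift, so that $r^{\rho_-}\cdot r^{\epsilon} = r^{\rho_-^{(1)}}\cdot r^{\rho_-^{(2)}}$ on each summand. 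Taking $r \to \infty$ and invoking Theorem \ref{thm:limitexist} converts each $r^{\rho_-^{(\cdot)}}\langle\cdots\rangle^{X_{D,r}}$ into the relative invariant $I_{\beta_\cdot}(\cdots)$, and the symmetric swap $\bar\psi^{a_2}[\alpha_2]_{i_2} \leftrightarrow \bar\psi^{a_3}[\alpha_3]_{i_3}$ descends trivially from the orbifold identity.

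The main obstacle is the careful bookkeeping of these $r$-powers, resting on two quantitative inputs: the $1/r$ coefficient in the Chen--Ruan pairing on $\mu_r$-gerbe twisted sectors, and the $+1$ shift between $\rho_-$ of the glued invariant and $\rho_-^{(1)} + \rho_-^{(2)}$ when $i \neq 0$. Once these are pinned down, the limit itself is painless, since for each fixed $(\beta_1,\beta_2,S_1,S_2)$ the orbifold basis sum collapses to finitely many terms whose individual limits are supplied by Theorem \ref{thm:limitexist}.
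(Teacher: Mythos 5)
Your proposal is correct and follows essentially the same route as the paper, which derives WDVV (along with string, divisor, dilaton, and TRR) as a direct consequence of the orbifold Definitions \ref{def:cycle} and \ref{rel-inv-neg1}, i.e.\ by passing the orbifold WDVV for $X_{D,r}$ to the limit. Your two quantitative inputs --- the factor $r$ in the inverse Chen--Ruan pairing on twisted sectors and the shift $\rho_-^{(1)}+\rho_-^{(2)}=\rho_-+\epsilon$ at a twisted node --- are exactly the bookkeeping the paper leaves implicit, and the finiteness of the basis sum via \eqref{eqn:tangcond} matches the remark following the proposition.
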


We would like to note that the sums in TRR and WDVV are finite  because for a given $\beta_1$, when all insertions but one are fixed, the remaining $\widetilde T_{i,k}$ is subject to the virtual dimensional constraint in Proposition \ref{prop:vdim}. Besides TRR and WDVV, we have other well-known properties in Gromov--Witten theory.
\begin{prop}[String equation]
\begin{align*}
    &I_\beta([1]_0, \bar\psi^{a_1}[\alpha_1]_{i_1}, \ldots, \bar\psi^{a_n}[\alpha_n]_{i_n}) \\
    =&I_\beta(\bar\psi^{a_1-1}[\alpha_1]_{i_1}, \ldots, \bar\psi^{a_n}[\alpha_n]_{i_n}) + \ldots + I_\beta(\bar\psi^{a_1}[\alpha_1]_{i_1}, \ldots, \bar\psi^{a_n-1}[\alpha_n]_{i_n}).
\end{align*}
\end{prop}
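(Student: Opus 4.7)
The plan is to use the orbifold definition (Definition~\ref{rel-inv-neg1}) together with Theorem~\ref{thm:limitexist} and reduce the claim to the classical string equation on the coarse moduli of stable maps, then pass to the $r \to \infty$ limit. Adding the insertion $[1]_0$ amounts to adding an interior (non-orbifold) marking carrying the trivial cohomology class $1 \in H^*(X)$, so $\rho_-$ is unchanged and the normalization $r^{\rho_-}$ is the same on both sides of the claimed identity.

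I would first introduce the forgetful morphism $\tilde\pi \colon \bM_{\Gamma'}(X_{D,r}) \to \bM_\Gamma(X_{D,r})$ that drops the new marking, together with the corresponding forgetful $\pi\colon \bM_{0,n+1+\rho}(X,\beta) \times_{X^\rho} D^\rho \to \bM_{0,n+\rho}(X,\beta) \times_{X^\rho} D^\rho$ on the coarse target; these fit into a commutative square with the stabilization maps $\tau, \tau'$. Since the forgotten marking is non-orbifold, the perfect obstruction theory is pulled back, giving $\tilde\pi^*[\bM_\Gamma(X_{D,r})]^{\vir} = [\bM_{\Gamma'}(X_{D,r})]^{\vir}$. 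Because the generic fiber of $\tilde\pi$ is the one-dimensional universal curve, $\tilde\pi_*(1) = 0$, and hence after multiplying by $r^{\rho_-}$ and passing to the limit, $\pi_* \mathfrak{c}_{\Gamma'}(X/D) = 0$.

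Next, since each $\bar\psi_i$ is pulled back from the coarse moduli of stable maps, the classical comparison $\bar\psi_i|_{\Gamma'} = \tilde\pi^* \bar\psi_i|_\Gamma + B_{i,0}$ applies, where $B_{i,0}$ is the boundary divisor on which marking $i$ and the new marking share a rational bubble. Using $\tilde\pi^* \bar\psi_i \cdot B_{i,0} = 0$ and $B_{i,0} \cdot B_{j,0} = 0$ for $i \neq j$, the binomial expansion collapses to
\[
\prod_i \bar\psi_i^{a_i}\big|_{\Gamma'} \;=\; \tilde\pi^*\Big(\prod_i \bar\psi_i^{a_i}\big|_\Gamma\Big) \;+\; \sum_i B_{i,0} \cdot \tilde\pi^*\Big(\bar\psi_i^{a_i-1}\prod_{j \neq i} \bar\psi_j^{a_j}\big|_\Gamma\Big),
\]
with the standard convention $\bar\psi^{-1} = 0$. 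Applying the projection formula and the evaluation-map compatibility $\ev_i \circ \tilde\pi = \ev_i$, the first piece vanishes via $\pi_* \mathfrak{c}_{\Gamma'}(X/D) = 0$, while each boundary piece contributes $\bar\psi_i^{a_i-1} \prod_{j \neq i} \bar\psi_j^{a_j} \cap \pi_*(B_{i,0} \cap \mathfrak{c}_{\Gamma'}(X/D))$. The identity $\pi_*(B_{i,0} \cap \mathfrak{c}_{\Gamma'}(X/D)) = \mathfrak{c}_\Gamma(X/D)$ then follows because $\tilde\pi$ restricts to a birational morphism from the preimage of $B_{i,0}$ onto $\bM_\Gamma(X_{D,r})$ (forgetting the new marking contracts the bubble).

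The main obstacle is confirming the virtual pull-back identity $\tilde\pi^*[\bM_\Gamma(X_{D,r})]^{\vir} = [\bM_{\Gamma'}(X_{D,r})]^{\vir}$ in the root-stack setting with large-age markings. This is the expected compatibility of the obstruction theory with forgetting a non-orbifold marking, but it should be checked carefully against the conventions of Section~3 (in particular that the forgotten marking is indeed one of the non-orbifold markings counted by $n$, so the root-stack structure is untouched). Once this is in hand, the string equation holds at the cycle level for each $r \gg 1$, and the passage to the limit is a formal consequence of Theorem~\ref{thm:limitexist}.
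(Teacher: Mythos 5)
Your argument is correct and is essentially the paper's intended one: the paper offers no proof, stating only that the string (and divisor, dilaton, TRR, WDVV) equations are direct consequences of Definitions \ref{def:cycle} and \ref{rel-inv-neg1}, i.e.\ they are inherited from the orbifold theory of $X_{D,r}$ exactly as you describe, with the $r\to\infty$ limit being harmless since the added marking is non-orbifold and $\rho_-$ is unchanged. One small correction: the vanishing you should invoke is $\bar\psi_i\big|_{\Gamma'}\cdot B_{i,0}=0$ (the cotangent line at the $i$-th marking is trivial on the contracted three-pointed bubble), not $\tilde\pi^*\bar\psi_i\cdot B_{i,0}=0$, which is false in general since $\tilde\pi^*\bar\psi_i\big|_{B_{i,0}}=-B_{i,0}\big|_{B_{i,0}}$; the correct identity, together with $B_{i,0}\cdot B_{j,0}=0$ for $i\neq j$, is exactly what collapses the expansion to the displayed formula, so the rest of your derivation stands.
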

\begin{prop}[Divisor equation]
Let $\omega\in H^2(X)$.
\begin{align*}
    &I_\beta([\omega]_0, \bar\psi^{a_1}[\alpha_1]_{i_1}, \ldots, \bar\psi^{a_n}[\alpha_n]_{i_n}) \\
    =&\Big(\int_\beta \omega\Big)  I_\beta(\bar\psi^{a_1}[\alpha_1]_{i_1}, \ldots, \bar\psi^{a_n}[\alpha_n]_{i_n}) + I_\beta(\bar\psi^{a_1-1}[\alpha_1\cdot\omega]_{i_1}, \ldots, \bar\psi^{a_n}[\alpha_n]_{i_n}) \\
    &+ \ldots + I_\beta(\bar\psi^{a_1}[\alpha_1]_{i_1}, \ldots, \bar\psi^{a_n-1}[\alpha_n\cdot\omega]_{i_n}).
\end{align*}
\end{prop}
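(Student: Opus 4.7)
The plan is to reduce the relative divisor equation to the well-known divisor equation for orbifold Gromov--Witten invariants of $X_{D,r}$ and then pass to the limit $r\to\infty$ using Definition \ref{rel-inv-neg1}. The crucial observation is that the divisor class $\omega\in H^2(X)$ is inserted at an untwisted marking, so it naturally lifts to a class on $X_{D,r}$ (pulled back along the coarsening $X_{D,r}\to X$), and the standard (descendant) divisor equation in orbifold Gromov--Witten theory applies without modification to such insertions.

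First, I would unravel the notation: given the insertion profile on the left-hand side, form the topological type $\Gamma=(0,n,\beta,\rho,\vec{\mu})$ with the nonzero $i_l$'s recorded in $\vec{\mu}$, and the corresponding type $\Gamma^\omega$ obtained by adding one extra interior marking (with no orbifold structure). By Definition \ref{rel-inv-neg1} together with Theorem \ref{thm:limitexist}, for sufficiently large $r$ we have
\begin{align*}
I_\beta([\omega]_0,\bar\psi^{a_1}[\alpha_1]_{i_1},\ldots,\bar\psi^{a_n}[\alpha_n]_{i_n})
=r^{\rho_-}\langle\omega\cup\text{(other insertions)}\rangle^{X_{D,r}}_{\Gamma^\omega},
\end{align*}
and similarly for all terms on the right-hand side. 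Since the exponent $\rho_-$ is unchanged when one adds or removes an untwisted marking, the factor $r^{\rho_-}$ is common to every term.

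Next, I would apply the descendant divisor equation for orbifold Gromov--Witten invariants of $X_{D,r}$ (which follows from the standard argument comparing $\bar\psi$-classes under the forgetful map dropping the extra marking carrying $\omega$). This produces the constant term $\bigl(\int_\beta\omega\bigr)\langle\cdots\rangle^{X_{D,r}}$ (using $\int_{\bar\beta}\omega=\int_\beta\omega$ since the coarse moduli curve class is $\beta$) plus $n$ correction terms in which $\omega$ is absorbed into the insertion at the $l$th marking, lowering its $\bar\psi$-exponent by one. It remains to identify each absorption with the product structure of $\HH$. If the $l$th marking is interior ($i_l=0$), then $\omega$ cups with $\alpha_l$ in $H^*(X)$, matching $[\alpha_l\cup\omega]_0=[\alpha_l]_0\cdot[\omega]_0$. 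If the $l$th marking is a relative marking ($i_l\neq 0$), then in Chen--Ruan cohomology $\omega$ restricts to the twisted sector (isomorphic to $D$) as $\iota^*\omega$ and cups there with $\alpha_l$, which exactly matches $[\alpha_l\cup\iota^*\omega]_{i_l}=[\alpha_l]_{i_l}\cdot[\omega]_0$ in the ring $\HH$ by the multiplication table in Section \ref{sec:ins}. Multiplying through by $r^{\rho_-}$ and taking $r\to\infty$ yields the claim.

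The only conceptually subtle point, and the place I would spend a sentence checking carefully, is ensuring that the orbifold divisor equation applies at the level of invariants with the specified twisted sectors (rather than at the level of cycle classes on the inertia stack), and that the matching of $\omega\cup\alpha_l$ with the $\HH$-product is correct in the negative-contact case as well; both reduce to the fact that $\omega$ comes from the untwisted sector and so its Chen--Ruan product with any class is simply its restriction followed by the ordinary cup product on the support. There is no obstacle coming from the limit: both sides are stably equal for all $r$ sufficiently large by Theorem \ref{thm:limitexist}, so the equality survives the limit trivially.
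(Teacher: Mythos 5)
Your proposal is correct and follows the same route the paper intends: the paper dismisses this proposition with the single remark that string, divisor, dilaton, TRR and WDVV ``are direct consequences of Definitions \ref{def:cycle} and \ref{rel-inv-neg1},'' i.e.\ of the orbifold-limit definition, which is precisely the reduction you carry out. Your elaboration --- the invariance of $r^{\rho_-}$ under adding an untwisted marking, the descendant correction terms under the forgetful map, and the identification of the Chen--Ruan cup product with an untwisted class with the $\HH$-product (restriction $\iota^*\omega$ on twisted sectors) --- supplies exactly the details the paper leaves implicit.
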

\begin{prop}[Dilaton equation]
\begin{equation*}
    I_\beta(\bar\psi [1]_0, \bar\psi^{a_1}[\alpha_1]_{i_1}, \ldots, \bar\psi^{a_n}[\alpha_n]_{i_n}) = (n-2)I_\beta(\bar\psi^{a_1}[\alpha_1]_{i_1}, \ldots, \bar\psi^{a_n}[\alpha_n]_{i_n}).
\end{equation*}
\end{prop}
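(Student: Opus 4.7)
The plan is to deduce the dilaton equation from the orbifold definition of relative invariants (Definition \ref{def:cycle}) together with the standard dilaton equation for orbifold Gromov--Witten theory of the root stack $X_{D,r}$.

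First observe that the dilaton insertion $\bar\psi[1]_0$ sits in $\HH_0$, i.e.\ it is an interior marking with contact order $0$. Therefore, attaching it to a topological type $\Gamma = (0, n, \beta, \rho, \vec\mu)$ produces a new topological type $\Gamma' = (0, n+1, \beta, \rho, \vec\mu)$ in which $\rho$ and $\rho_-$ are unchanged. Applying Definition \ref{def:cycle} (which is well-defined thanks to Theorem \ref{thm:limitexist}), one can express both sides of the proposed equality as $r^{\rho_-}$ times orbifold invariants of $X_{D,r}$, for any fixed $r \gg 1$:
\begin{align*}
I_\beta(\bar\psi[1]_0, \bar\psi^{a_1}[\alpha_1]_{i_1}, \ldots, \bar\psi^{a_n}[\alpha_n]_{i_n}) &= r^{\rho_-} \int_{[\bM_{\Gamma'}(X_{D,r})]^{\vir}} \bar\psi_{0}\, \prod_{l=1}^n \bar\psi_l^{a_l}\, \ev_l^*\alpha_l, \\
I_\beta(\bar\psi^{a_1}[\alpha_1]_{i_1}, \ldots, \bar\psi^{a_n}[\alpha_n]_{i_n}) &= r^{\rho_-} \int_{[\bM_\Gamma(X_{D,r})]^{\vir}} \prod_{l=1}^n \bar\psi_l^{a_l}\, \ev_l^*\alpha_l,
\end{align*}
where $\bar\psi_0$ denotes the psi-class at the new dilaton marking.

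Next I would apply the standard dilaton equation on the orbifold moduli space. Since the dilaton marking is interior (no orbifold structure), the forgetful morphism $\pi: \bM_{\Gamma'}(X_{D,r}) \to \bM_\Gamma(X_{D,r})$ that drops it is well-defined and satisfies $\pi^*[\bM_\Gamma(X_{D,r})]^{\vir} = [\bM_{\Gamma'}(X_{D,r})]^{\vir}$. The argument from the Deligne--Mumford case carries over verbatim: universal curve calculations near the dilaton marking are unaffected by the orbifold structure at other markings, and one obtains the identity $\pi_* \bar\psi_0 = 2g - 2 + n = n - 2$ when $g = 0$. Combining with the projection formula gives
\[
\int_{[\bM_{\Gamma'}(X_{D,r})]^{\vir}} \bar\psi_{0}\, \prod_{l=1}^n \bar\psi_l^{a_l}\, \ev_l^*\alpha_l = (n-2)\int_{[\bM_\Gamma(X_{D,r})]^{\vir}} \prod_{l=1}^n \bar\psi_l^{a_l}\, \ev_l^*\alpha_l,
\]
and multiplying by $r^{\rho_-}$ yields the claim.

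The one point that deserves checking is that the paper's $\bar\psi_0 = \tau^*\psi_0$ (pullback from $\bM_{0, n+1+\rho}(X,\beta)\times_{X^\rho} D^\rho$) agrees, as a cohomology class on $\bM_{\Gamma'}(X_{D,r})$, with the usual psi-class of the dilaton marking. This is immediate since the dilaton marking has no orbifold structure, so $\tau$ does not modify the cotangent line at that point; consequently the standard forgetful-map argument applies without modification, and no further input beyond Definition \ref{def:cycle} and Theorem \ref{thm:limitexist} is needed.
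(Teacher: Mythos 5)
Your proof is correct and takes essentially the same route as the paper, which simply asserts that the string, divisor, dilaton, TRR and WDVV equations are ``direct consequences of Definitions \ref{def:cycle} and \ref{rel-inv-neg1}'' --- that is, they are inherited from the corresponding standard equations in the orbifold Gromov--Witten theory of the root stack $X_{D,r}$ via the $r\to\infty$ limit, exactly as you spell out. The two points you single out for verification (the dilaton marking is an interior, non-orbifold marking, so $\rho_-$ and the topological type of the relative data are unchanged, and the coarse psi-class at that marking agrees with the orbifold one) are precisely the right ones.
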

String, divisor, dilaton, TRR and WDVV equations are direct consequences of Definitions \ref{def:cycle} and \ref{rel-inv-neg1}.

\subsection{Relative quantum rings} \label{sec:quan}
Let $\mt=\sum t_{i,k} \widetilde T_{i,k}$ where $t_{i,k}$ are formal variables. For simplicity, we write the set of all formal variables as $\{t_{i,k}\}$. Note that $i$ takes values in $\Z$ and $k$ ranges over labels of basis in the corresponding cohomology rings. Also note that it is an infinite set.

If one considers odd-degree cohomology classes, we have to impose supercommutativity among $\{t_{i,k}\}$ as well as on the interactions between $\{t_{i,k}\}$ and $\{\widetilde T_{i,k}\}$. This is a standard procedure and we omit the details. If one does not consider odd-degree classes, most of the theories (except Virasoro constraints) can be built on the subring of even-degree classes.

We denote the Novikov variable by $q$ and form the Novikov ring $\C[\![\NE(X)]\!]$ where $\NE(X)$ is the cone of effective curve classes in $X$. Denote its formal power series ring with variables in $\{t_{i,k}\}$ (infinitely many) by
\[
\C[\![\NE(X)]\!][\![\{t_{i,k}\}]\!].
\]
In order to define the genus-zero potential function, we have to work on a completion of this ring. Define the ideals
\[
I_p=(\{t_{i,k}\}_{|i|\geq p})
\]
for $p\geq 0$. They form a chain
\[
I_0\supset I_1\supset I_2\supset \cdots\text{.}
\]
Note that $\bigcap\limits_{p\geq 0} I_p=0$. We now have the completion
\begin{equation}\label{eqn:completion}
\C[\![\NE(X)]\!]\widehat{[\![\{t_{i,k}\}]\!]}=\varprojlim \C[\![\NE(X)]\!][\![\{t_{i,k}\}]\!]/I_p.
\end{equation}

Define the \emph{genus-zero Gromov--Witten potential} to be
\begin{equation*}
\Phi(\mt) = \sum\limits_{n\geq 3}\sum\limits_{\beta} \dfrac{1}{n!}I_{\beta}( \underbrace{\mt,\ldots,\mt}_n )q^\beta \in \C[\![\NE(X)]\!]\widehat{[\![\{t_{i,k}\}]\!]}.
\end{equation*}
Explicitly, $\Phi$ is a formal function in variables $\{t_{i,k}\}$, and can be explicitly written as the following.
\begin{equation}\label{eqn:phigeneral}
\Phi(\ldots,t_{i,k},\ldots) = \sum\limits_{\{n_{i,k}\}}\sum\limits_\beta I_\beta(\prod\limits_{i,k}\widetilde T_{i,k}^{n_{i,k}})\prod\limits_{i,k}\dfrac{t_{i,k}^{n_{i,k}}}{n_{i,k}!}q^\beta,
\end{equation}
where the first sum ranges over sets of nonnegative integers $\{n_{i,k}\}$ such that all but finitely many $n_{i,k}$ are $0$. Also, $\prod\limits_{i,k}\widetilde T_{i,k}^{n_{i,k}}$ should be understood as putting $\sum n_{i,k}$ insertions, with $\widetilde T_{i,k}$ repeated $n_{i,k}$ times.

\begin{rmk}
There is a difference here between relative Gromov--Witten theory and absolute Gromov--Witten theory. In absolute Gromov--Witten theory, fixing a $q^\beta$ in $\Phi(\mt)$, its coefficient is a polynomial in their formal variables. In relative Gromov--Witten theory, the coefficient of $q^\beta$ in $\Phi(\mt)$ is no longer a polynomial. This is inevitable even for $q=0$. 
\end{rmk}

Now we define quantum product `$\star$' by the following rule.
\begin{equation}\label{eqn:qr}
\widetilde T_{i_1,k_1} \star \widetilde T_{i_2,k_2} = \sum\limits_{i_3,k_3} \dfrac{\partial^3 \Phi}{\partial t_{i_1,k_1} \partial t_{i_2,k_2} \partial t_{i_3,k_3}} \widetilde T_{-i_3}^{k_3}.
\end{equation}
Note that $\widetilde T_{-i_3}^{k_3}$ and $\widetilde T_{i_3,k_3}$ are dual under the pairing. This defines a ring structure on $\HH[\![\NE(X)]\!]\widehat{[\![\{t_{i,k}\}]\!]}$.

We can also define small relative quantum ring. In 
\begin{equation}\label{eqn:3derivative}
\dfrac{\partial^3 \Phi}{\partial t_{i_1,k_1} \partial t_{i_2,k_2} \partial t_{i_3,k_3}},
\end{equation}
we can set $t_{i,k}=0$ if $i\neq 0$ or $\widetilde T_{i,k}$ is not a degree-zero or (real) degree-two cohomology class. We denote small relative quantum product by $\smst$.

In absolute Gromov--Witten theory, if we set $q=0, \mt=0$ in quantum cohomology, we recover the classical product structure of cohomology ring. In relative Gromov--Witten theory, we have a similar result as follows.

\begin{ex}[Specialization at $q=0, \mt=0$] \label{ex:special}
Under the specialization at $q=0, \mt=0$, the definition of quantum product becomes
\[
\widetilde T_{i_1,k_1} \star_{q=0,\mt=0} \widetilde T_{i_2,k_2} = \sum\limits_{i_3,k_3} I_0(\widetilde T_{i_1,k_1}, \widetilde T_{i_2,k_2}, \widetilde T_{i_3,k_3}) \widetilde T_{-i_3}^{k_3}.
\]
We claim that 
\[I_0(\widetilde T_{i_1,k_1}, \widetilde T_{i_2,k_2}, \widetilde T_{i_3,k_3}) = A(\widetilde T_{i_1,k_1}, \widetilde T_{i_2,k_2}, \widetilde T_{i_3,k_3}),\] where $A$ is the tri-linear function defined in \eqref{eqn:trilinear}. Note if one of $i_1,i_2,i_3$ is negative, it is simply a rubber invariant over $D$ (no graphs of $\infty$ type due to trivial curve class). Furthermore, the rubber moduli is $\bM_{0,3}\times D$ and the invariant is simply an integration over $D$. When two of $i_1,i_2,i_3$ are negative, it is again a rubber invariant times an extra class $C_{\fG}$ (see \eqref{eqn:cg}). The moduli space is again $\bM_{0,3}\times D$, and one computes that $C_{\fG}$ is simply the divisor class of $D$. Thus, we have an extra $D$ factor in the integration.
\end{ex}
\begin{rmk}
There is yet another difference between absolute and relative quantum ring. In absolute theory, if we set $q=0$, those \eqref{eqn:3derivative} already become independent of $\mt$ (in other words, has the same effect as setting $\mt=0$) because a nontrivial degree-zero nondescendant invariant has to be a three-point invariant. In relative theory, if we set $q=0$, we could still have nontrivial $n$-point invariants with $n>3$. In this case, bipartite graphs simplifies to a single vertex of type $0$, and moduli space is simply a product $\bM_{0,n}\times D$. But $C_{\fG}$ could involve $\psi$-classes on $\bM_{0,n}$, resulting in nonzero integrals.
\end{rmk}

Relative quantum ring is in fact a bigraded ring. We define the two gradings as extensions of $\deg^{(1)}, \deg^{(2)}$ defined in \eqref{eqn:deg1}, \eqref{eqn:deg2}. Besides \eqref{eqn:deg1} and \eqref{eqn:deg2}, we further define
\[
\deg^{(1)}(q^\beta)=\int_\beta D, \quad \deg^{(1)}(t_{i,k})=-i,
\]
\[
\deg^{(2)}(q^\beta)=\int_\beta c_1(T_X(\mathrm{-log}D)), \quad \deg^{(2)}(t_{i,k})=1-\mathrm{deg}^{(2)}(\widetilde T_{i,k}).
\]
Relative quantum product preserves the first grading because of the condition \eqref{eqn:tangcond}. The second grading is preserved because of the virtual dimension computed in Proposition \ref{prop:vdim}.

\subsection{An example: small relative quantum ring of $(\Pp^n,\Pp^{n-1})$}
Let $H$ be the class of hyperplane in $\Pp^n$. And $q^d$ means that the curve class is of degree $d$. By divisor equation, genus-zero potential is a series in $qe^{t_{0,1}}$ where we assume that $\widetilde T_{0,1}=[H]_0$. As a typical convention in Gromov--Witten theory, we can simply set $t_{0,1}=0$ to cut down the unnecessary variable. 

First, we note that 
\[
\deg^{(1)}(q)=1,\quad \deg^{(2)}(q)=n.
\]
It is easy to compute that
\begin{equation}\label{eqn:relation}
    [1]_1 \smst [H^n]_0 = [1]_0q.
\end{equation}
We claim that the relation \eqref{eqn:relation}, plus the associativity, the two gradings, and the classical ring structure of $\HH$ determines the whole small relative quantum ring.

First, gradings already give us a lot of information. For example, because $\deg^{(2)}(q)=n$, we easily conclude that 
\[
[H^a]_i\smst [H^b]_j=[H^{a+b}]_{i+j}\qquad \text{if } a+b< n, i,j>0.\] 
For $[H]_0\smst [H^n]_0$, multiply both sides of \eqref{eqn:relation} by $[1]_{-1}$. We have \[
[1]_{-1}\smst [1]_1\smst [H^n]_0=[H]_0\smst [H^n]_0=[1]_{-1}q.\] 
This also allows us to compute $[H^a]_0\smst [H^{b}]_0$ with $a+b>n$ and a lot of others.

We are going to write this ring in a neater way. Since $[1]_0$ is the identity, we write it as $1$. We also set 
\[
[1]_1=x,\quad [H]_0=y.
\]
Thus, $[H^a]_0=y^a$ if $i\leq n$. $[1]_{-i}$ can be denoted as $y/x^i$ if $i>0$, but we keep in mind that $x$ itself is not invertible. Relation \eqref{eqn:relation} becomes $xy^n=q$. An arbitrary element of positive grading can be written as
\[
[H^a]_i=y^ax^i\qquad\text{for }a<n,\; i>0.\] 
An arbitrary element of negative grading can be written as
\[
[H^a]_{-i}=y^{a+1}/x^i\qquad\text{for }a<n,\; i>0.
\]
As a result, we conclude the following:
\begin{thm}
The small relative quantum ring of $(\Pp^n,\Pp^{n-1})$ is isomorphic to the sub-$\C$-algebra of \[\C[x,x^{-1},y,q]/(xy^n-q)\] generated by $1,x,y/x,y/x^2,\ldots$. 
\end{thm}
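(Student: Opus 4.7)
The plan is to exhibit a $\C$-algebra isomorphism between the two rings by sending generators in the obvious way, with the single quantum relation \eqref{eqn:relation} as the key non-classical input; combined with the classical structure of $\HH$ (Example \ref{ex:special}) and the bigrading \eqref{eqn:deg1}--\eqref{eqn:deg2}, this will determine all products in the small relative quantum ring.

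First I would verify the identity $[1]_1 \smst [H^n]_0 = [1]_0\,q$. By definition \eqref{eqn:qr}, the coefficient of $\widetilde T_{-i_3}^{k_3}$ in $[1]_1 \smst [H^n]_0$ is $\sum_\beta I_\beta([1]_1, [H^n]_0, \widetilde T_{i_3,k_3})\, q^\beta$. The contact-order condition \eqref{eqn:tangcond} forces $i_3 = d-1$, where $d = \int_\beta H$, and Proposition \ref{prop:vdim} pins down the virtual dimension of the relevant moduli. I would then run a case-by-case check: for $d=0$ the integral reduces to a classical one over $D$ that vanishes since $H^n|_D = 0$; for $d=1$ the contact constraint gives $i_3=0$ and real-degree matching forces $k_3=n$, reducing to the unique nontrivial contribution $I_1([1]_1,[H^n]_0,[H^n]_0)$, which counts the unique line through two generic points and equals $1$; for $d \geq 2$ the matching forces a cohomology class on $D$ of degree exceeding $\dim D = n-1$, hence gives zero. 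Since $\widetilde T_0^n = [T^n]_0 = [1]_0$, we obtain $[1]_1 \smst [H^n]_0 = [1]_0\,q$.

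Next I would define $\Psi:\C[x,x^{-1},y,q]/(xy^n-q) \to \text{(small relative quantum ring)}$ by $x \mapsto [1]_1$, $y \mapsto [H]_0$, $q \mapsto q$; the relation \eqref{eqn:relation} makes $\Psi$ well-defined. Restricted to the sub-$\C$-algebra generated by $1,x,y/x,y/x^2,\ldots$, its image attains each basis element $[H^a]_0$ for $0 \leq a \leq n$, $[H^a]_i = y^a x^i$ for $i > 0$ and $a < n$, and $[H^a]_{-i} = y^{a+1}/x^i$ for $i > 0$ and $a < n$ --- precisely the monomials described in the text preceding the theorem --- and by combining the classical product rules of Example \ref{ex:special} with \eqref{eqn:relation}, $\Psi$ is a surjective ring homomorphism onto the full small quantum ring. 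Injectivity follows from the bigrading: distinct normal-form monomials of the subalgebra sit in distinct $(\deg^{(1)},\deg^{(2)})$-sectors, hence map to linearly independent classes.

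The main obstacle is confirming that no further quantum corrections appear beyond \eqref{eqn:relation}. Every other product $[1]_i \smst [H^a]_0$, $[1]_{-i} \smst [H^a]_0$, etc., must be shown to agree with its classical value, which requires repeating the virtual-dimension and contact-order bookkeeping. The decisive input enabling this is the vanishing of $H^{\geq n}$ in $H^*(D) = H^*(\Pp^{n-1})$, which together with the contact constraint eliminates all potential $d \geq 1$ contributions outside the one captured by \eqref{eqn:relation}; once these are ruled out, the isomorphism drops out by matching the bidegree-graded presentations.
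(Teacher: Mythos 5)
Your verification of the single relation \eqref{eqn:relation} by contact-order and dimension bookkeeping is fine, and matches what the paper leaves implicit. The gap is in the last step: the claim that ``no further quantum corrections appear beyond \eqref{eqn:relation}'' and that every other product of basis elements agrees with its classical value is false, and the asserted mechanism (vanishing of $H^{\geq n}$ in $H^*(\Pp^{n-1})$) does not deliver it. For instance, $[H]_0\smst[H^n]_0=q\,[1]_{-1}$: the contact condition forces $i_3=1$, the dimension count forces $\bar T_{k_3}=H^{n-1}|_D$ (the point class of $D$, which does \emph{not} vanish), and the degree-one invariant $I_1([H]_0,[H^n]_0,[H^{n-1}]_1)$ counts the unique line through a generic point of $\Pp^n$ and a fixed point of $\Pp^{n-1}$, hence equals $1$. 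Classically $H\cup H^{n}\cdot H=H^{n+1}=0$ in $H^*(\Pp^n)$, so this is a genuine quantum correction in a product other than $[1]_1\smst[H^n]_0$. The same happens for $[H^a]_0\smst[H^b]_0$ with $a+b\geq n+1$ and for many mixed products; these corrections are exactly what the target ring records via $y^{n+1}=q\,y/x$, so if they all vanished your map $\Psi$ could not be an isomorphism.

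The paper's route avoids computing these corrections directly: it uses the bigrading (in particular $\deg^{(2)}(q)=n$) to show that products of total $\deg^{(2)}$ less than $n$ receive no corrections, verifies the single relation \eqref{eqn:relation}, and then \emph{derives} the corrected products by associativity of $\smst$, e.g.\ $[H]_0\smst[H^n]_0=([1]_{-1}\smst[1]_1)\smst[H^n]_0=[1]_{-1}\smst q=q\,[1]_{-1}$. To repair your argument you must either adopt this associativity step, or honestly compute each corrected product by the same dimension analysis you used for \eqref{eqn:relation} and check the answers against the monomial arithmetic in $\C[x,x^{-1},y,q]/(xy^n-q)$. Separately, note that $\Psi$ cannot be defined on the full Laurent ring (since $[1]_1$ is not invertible in the quantum ring, as the paper warns); it must be defined directly on the subalgebra via $y/x^i\mapsto[1]_{-i}$, which requires knowing the relations among these generators.
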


There is a nicer way to interpret the relationship between the quantum and the classical limit ring in terms of infinitely generated monoids. Denote the two generators of the abelian group $\Z^2$ by $e_1$ and $e_2$ and let $\sigma\subset \Z^2$ be the $2$-dimensional cone generated by $e_1, -e_1, e_2$. Let $\sigma'$ be the ray generated by $-e_1$. Then $P=\sigma\backslash\sigma'$ is an infinitely generated cone.
\begin{lem}
The small relative quantum ring is isomorphic to the algebra $\C[P]$ after identifying $e_1$ with $x$ and $e_2$ with $y$.
\end{lem}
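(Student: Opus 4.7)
My plan is to use the preceding theorem directly: it already identifies the small relative quantum ring with a concrete sub-$\C$-algebra $A$ of $\C[x,x^{-1},y,q]/(xy^n-q)$ generated by $1,x,y/x,y/x^2,\ldots$. It therefore suffices to verify that, under $e_1\mapsto x$, $e_2\mapsto y$, we have $A\cong \C[P]$ as $\C$-algebras. This is a bookkeeping exercise on monomials.

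First I would eliminate $q$ using the relation $q=xy^n$, so $\C[x,x^{-1},y,q]/(xy^n-q)\cong \C[x,x^{-1},y]$, and view $A$ as a subalgebra of $\C[x,x^{-1},y]$. Since $\{x^ay^b:a\in\Z,\, b\geq 0\}$ is a $\C$-basis of $\C[x,x^{-1},y]$, identifying $A$ reduces to identifying exactly which monomials $x^ay^b$ belong to $A$. I claim $x^ay^b\in A$ if and only if $(a,b)\in P$, where I read $P=\{(a,b)\in\Z^2:b\geq 1\}\cup\{(a,0):a\geq 0\}$ (the complement in the upper half-plane of the open negative $x$-axis).

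For the forward direction, note that the generators of $A$ correspond to $(1,0)$ (from $x$) and $(-i,1)$ for $i\geq 0$ (from $y/x^i$, including $y$ itself via $y=x\cdot(y/x)$), each of which lies in $P$. A straightforward check shows $P$ is closed under addition: if either second coordinate is $\geq 1$, so is the sum; if both are $0$, both first coordinates are $\geq 0$, and so is the sum. Hence every product of generators corresponds to a lattice point in $P$. For the reverse direction, given $(a,b)\in P$, I would exhibit $x^ay^b$ as a product of generators by cases: if $b=0$ and $a\geq 0$, take $x^a$; if $b\geq 1$ and $a\geq 0$, take $x^a\cdot(x\cdot(y/x))^b=x^ay^b$; if $b\geq 1$ and $a<0$, write $x^ay^b = (y/x^{-a})\cdot y^{b-1}$, both of which are in $A$.

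Finally, since the monomials $\{x^ay^b\}$ are linearly independent in $\C[x,x^{-1},y]$, the $\C$-algebra map $\C[P]\to A$ sending $e_1\mapsto x$, $e_2\mapsto y$ is surjective by the reverse direction and injective because it sends distinct basis monomials of $\C[P]$ to distinct basis elements of $\C[x,x^{-1},y]$. This gives the desired isomorphism. There is no real obstacle here; the only thing to be careful about is the precise convention for ``$\sigma\setminus\sigma'$'' so that $(0,0)\in P$ (providing the unit) while $(a,0)$ for $a<0$ is excluded — matching the observation that $1/x$ is not in $A$ even though $y/x$ is.
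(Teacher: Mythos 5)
Your argument is correct and is exactly the bookkeeping the paper leaves implicit (the lemma is stated without proof beyond the remark that $q=xy^n$ is absorbed into the monoid): eliminate $q$, observe that the subalgebra $A$ is a monomial algebra, and check that the exponent monoid generated by $(1,0)$ and $(-i,1)$, $i\geq 1$, is precisely $P$. Your reading of $P$ --- including the origin so that $\C[P]$ is unital, while excluding the strictly negative $e_1$-axis --- is the only one consistent with the statement, and you are right to flag that the literal ``$\sigma\setminus\sigma'$'' with a closed ray $\sigma'$ would wrongly delete the unit.
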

Note that $q$ disappears because it is already generated by $x$ and $y$. Since $q=xy^n$, we get the classical limit defined in Section \ref{sec:ins} by setting $xy^n=0$. So we have following result:
\begin{lem}
The ring of insertions (classical limit of the quantum ring) of $(\Pp^n,\Pp^{n-1})$ is isomorphic to the quotient ring $\C[P]/(xy^n)$.
\end{lem}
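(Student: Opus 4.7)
The plan is to read this lemma off directly from the immediately preceding lemma, which identifies the small relative quantum ring with the monoid algebra $\C[P]$ via the dictionary $x \leftrightarrow e_1$, $y \leftrightarrow e_2$. Under that identification the Novikov variable $q$ becomes the monomial $xy^n$, because the fundamental quantum relation $[1]_1 \smst [H^n]_0 = [1]_0 q$ translates to $x \cdot y^n = q$. The ring of insertions defined in Section \ref{sec:ins} is the classical limit of the small quantum product, that is, the specialization at $q = 0$ and $\mt = 0$, as spelled out in Example \ref{ex:special}. Setting $q = 0$ in $\C[P]$ amounts to imposing the single relation $xy^n = 0$, which gives the quotient $\C[P]/(xy^n)$.

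To make this airtight I would cross-check directly against the product rules in Section \ref{sec:ins}. The monoid $P \subset \Z^2$ has $\C$-basis $\{x^i y^a : a \geq 0,\ \text{with } a \geq 1 \text{ if } i < 0\}$. Under the dictionary $[H^a]_i \mapsto y^a x^i$ for $i > 0$ and $a \leq n-1$; $[H^a]_0 \mapsto y^a$ for $0 \leq a \leq n$; and $[H^a]_{-i} \mapsto y^{a+1}/x^i$ for $i > 0$, $a \leq n-1$, the explicit basis of $\HH$ listed in Section \ref{sec:ins} matches the monomial basis of $\C[P]/(xy^n)$, because the relation $xy^n = 0$ kills precisely the monomials that would correspond to the forbidden classes $[H^n]_i$ with $i \neq 0$ (which vanish because $H^n|_D = 0$) and the monomials $x^i y^{n+1}$ (which vanish because $H^{n+1} = 0$ in $H^*(X)$). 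Then the seven cases of the explicit product formula in Section \ref{sec:ins} each reduce to monomial multiplication in $\C[P]/(xy^n)$; in particular, the two Gysin-type cases $[\alpha]_{-i} \cdot [\beta]_j$ and $[\alpha]_{<0} \cdot [\beta]_{<0}$ produce exactly the extra factor of $D = y$ predicted by the exponents in the monoid $P$.

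The only real content beyond the preceding lemma is the observation that $q = xy^n$ is a nonzero element of $\C[P]$, so the specialization at $q = 0$ is unambiguous and coincides with forming the quotient by the ideal $(xy^n)$. There is no significant obstacle; the argument is essentially a bookkeeping verification, and the conceptual work has already been done in the identification of the small quantum ring with $\C[P]$.
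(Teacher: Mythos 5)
Your proposal is correct and follows exactly the route the paper takes: the lemma is read off from the preceding identification of the small relative quantum ring with $\C[P]$, together with the observation that the classical limit is the specialization $q=xy^n=0$, hence the quotient $\C[P]/(xy^n)$. Your additional cross-check of the monomial basis against the explicit product rules of Section \ref{sec:ins} is a harmless (and reasonable) elaboration of the same argument, not a different method.
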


\subsection{Givental formalism in genus zero}\label{sec:Givental}
A good reference on genus-zero Givental formalism could be \cite{Giv1}. Besides, a lot of other works contain good introductions to the Lagrangian cones including \cites{CCIT, CIJ, LPS, Lproceeding}, among others. 
In fact, \cite{Lproceeding} and \cite{LPS}*{Section 3} adopt an axiomatic approach which also applies to our situation. The key is that a right set-up needs to be given so that these equations can be organized as the same differential equations as \cite{Giv1}*{(DE),(SE),(TRR)}. The rest of the properties simply follow formally.
In this section, we describe the set-up and briefly recall Givental's formalism of the Lagrangian cones. But we do not repeat the details.

Let
\[
\mathcal H=\HH \otimes_\C \C[\![\NE(X)]\!](\!(z^{-1})\!),
\]
where $(\!(z^{-1})\!)$ means formal Laurent series in $z^{-1}$. It has a polarization
\[
\mathcal H=\mathcal H_+\oplus\mathcal H_-,
\]
where
\[
\mathcal H_+=\HH \otimes_\C \C[\![\NE(X)]\!][z], \quad \text{and} \quad \mathcal H_-=z^{-1}\HH \otimes_\C \C[\![\NE(X)]\!][\![z^{-1}]\!].
\]

There is a $\C[\![\NE(X)]\!]$-valued symplectic form
\[
\Omega(f,g)=\text{Res}_{z=0}(f(-z),g(z))dz,
\]
where the pairing $(f(-z),g(z))$ takes values in $ \C[\![\NE(X)]\!](\!(z^{-1})\!)$ and is induced by the pairing on $\HH$. There is a natural symplectic identification between $\mathcal H_+\oplus \mathcal H_-$ and the cotangent bundle $T^*\mathcal H_+$. 

To parametrize points in $\mathcal H$, we need more formal variables than in the previous subsection. For $l\geq 0$, we write $\mt_l=\sum\limits_{i,k} t_{l;i,k}\widetilde T_{i,k}$ where $t_{l;i,k}$ are formal variables. Also write
\[
\bt(z)=\sum\limits_{l=0}^\infty \mt_l z^l.
\]
\emph{The relative genus-zero descendant Gromov--Witten potential} is defined as
\[
\mathcal F(\bt(z))=\sum\limits_\beta \sum\limits_{n=0}^\infty \dfrac{q^\beta}{n!} I_\beta(\underbrace{\bt(\bar\psi),\ldots,\bt(\bar\psi)}_{n}).
\]
To compare with the notation in \cites{Giv1, Lproceeding}, note that their $H$ corresponds to our $\HH$, their parameters $t_l^\mu$ correspond to our $t_{l;i,k}$ (both parametrize classes from rings of insertions). We further let $\widetilde T_{0,0}=[1]_0$.
\begin{prop}
$\mathcal F(\bt)$ satisfies (DE),(SE),(TRR) in \cite{Giv1} (or equivalently, \cite{Lproceeding} with $G_0=\mathcal F$).
\end{prop}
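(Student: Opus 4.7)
The plan is to verify each of (DE), (SE), (TRR) by unpacking the definition of $\mathcal F(\bt)$ and reducing to the corresponding invariant-level identities stated in Section~\ref{sec:ins} for $I_\beta$ (the String, Divisor and TRR equations). The logical structure is identical to the absolute Gromov--Witten case: once one has multilinear symmetric invariants satisfying those axioms together with a nondegenerate pairing on the ring of insertions, the Givental-type differential equations follow by a formal derivation identical to the one in \cite{LPS}*{Section 3} or \cite{Lproceeding}. The only things to double-check are that the bookkeeping of the basis $\{\widetilde T_{i,k}\}$ and its dual $\{\widetilde T_{-i}^k\}$, and the completed coefficient ring \eqref{eqn:completion}, play the same roles here as do the Poincar\'e-dual bases and the Novikov ring in the absolute theory.

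First I would write out
\[
\mathcal F(\bt) = \sum_\beta \sum_{n\geq 0} \frac{q^\beta}{n!} \sum_{\{(l_j,i_j,k_j)\}} I_\beta\!\big(\bar\psi^{l_1}\widetilde T_{i_1,k_1},\ldots,\bar\psi^{l_n}\widetilde T_{i_n,k_n}\big) \prod_{j=1}^n t_{l_j;i_j,k_j},
\]
so that $\partial/\partial t_{l;i,k}$ inserts a factor $\bar\psi^l \widetilde T_{i,k}$ at a new marking. For (SE), differentiating in $t_{0;0,0}$ (where $\widetilde T_{0,0}=[1]_0$) and applying the String equation for $I_\beta$ termwise reproduces Givental's (SE). For (DE), differentiating in $t_{0;0,k}$ for $\widetilde T_{0,k}\in H^2(X)\subset \HH$ and applying the Divisor equation for $I_\beta$ recovers (DE), with the $\int_\beta \omega$ factors producing the standard exponential shift against the Novikov ring. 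Both arguments are purely formal once the corresponding axioms are in hand; note in particular that insertion of $[1]_0$ or $[\omega]_0$ carries tangency index $0$ and so respects the constraint \eqref{eqn:tangcond} automatically.

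For (TRR), differentiating $\mathcal F$ three times and applying Proposition~\ref{prop:TRR} to each term is the main step. The key bookkeeping is that $\sum_{i,k}\widetilde T_{i,k}\otimes \widetilde T_{-i}^k$ is precisely the coevaluation of the pairing \eqref{eqn:pairing}, so after collecting terms the identity takes the standard shape
\[
\partial_a \partial_b \partial_c \mathcal F \;=\; \sum_{\mu,\nu} (\partial_a \partial_\mu \mathcal F)\,\eta^{\mu\nu}\,(\partial_\nu \partial_b \partial_c \mathcal F)
\]
of \cite{Giv1}, with $\eta$ the pairing on $\HH$ and indices $\mu,\nu$ running over $\{\widetilde T_{i,k}\}$. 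Tangency is preserved on both sides since the inserted dual classes have opposite grading, matching the split $\beta=\beta_1+\beta_2$.

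The main obstacle I anticipate is checking that the infinite-dimensionality of $\HH$ does not cause convergence issues: the sum over the dual basis in (TRR) is a priori infinite, so one must verify that each coefficient of a monomial $q^\beta \prod t_{l;i,k}^{n_{l;i,k}}$ is a finite sum. This follows from Proposition~\ref{prop:vdim}: for fixed insertions and a fixed splitting $\beta=\beta_1+\beta_2$, the virtual dimension constraint forces all but finitely many $\widetilde T_{i,k}$ to give a zero three-point contribution, so the identity is well-defined coefficient-by-coefficient inside the completion $\C[\![\NE(X)]\!]\widehat{[\![\{t_{i,k}\}]\!]}$. Once this finiteness check is in place, all three equations follow from the axioms of Section~\ref{sec:ins} by the same formal derivation as in the absolute theory, with no further geometric input required.
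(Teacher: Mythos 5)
Your overall strategy coincides with the paper's: the paper offers no detailed argument for this proposition, observing only that once the invariant-level identities of Section \ref{sec:ins} are in hand (stated there as direct consequences of Definitions \ref{def:cycle} and \ref{rel-inv-neg1}), the potential $\mathcal F$ satisfies the same differential equations as in \cite{Giv1} by the purely formal derivation of \cite{Lproceeding} and \cite{LPS}. Your handling of the one genuinely new point --- that the dual-basis sum in (TRR) is finite coefficient-by-coefficient thanks to the dimension constraint of Proposition \ref{prop:vdim}, so everything lives in the completion \eqref{eqn:completion} --- is exactly right and is the part worth spelling out.

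There is, however, one concrete slip. In \cite{Giv1} (and in the axiomatic setup of \cite{Lproceeding}) the trio (SE), (DE), (TRR) consists of the String Equation, the \emph{Dilaton} Equation, and the Topological Recursion Relations; (DE) is not the Divisor Equation. The divisor equation is target-specific and plays no role in establishing the overruled Lagrangian cone, whereas the dilaton equation is essential (it is what justifies the dilaton shift $\bq(z)=-z+\bt(z)$ introduced immediately afterwards). Your proposal verifies the divisor equation --- true, but not one of the three required identities --- and never addresses the dilaton equation, so as written one third of the proposition is unproven. The fix is immediate and parallel to your treatment of (SE): differentiate $\mathcal F$ with respect to $t_{1;0,0}$, which inserts $\bar\psi\,[1]_0$ at a new marking, and apply the paper's Dilaton equation proposition termwise. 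With that substitution the argument is complete and matches the paper's intent.
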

Following \cite{Giv1}, we define the dilaton-shifted coordinates of $\mathcal H_+$
\[
\bq(z)=\mq_0+\mq_1z+\mq_2z^2+\ldots=-z+\mt_0+\mt_1z+\mt_2z^2+\ldots.
\]
Coordinates in $\mathcal H_-$ are usually chosen so that $\bq, \bp$ form Darboux coordinates. 
\[
\bp(z)=\mmp_0z^{-1}+\mmp_1z^{-2}+\ldots = \sum\limits_{l\leq -1} \sum\limits_{i,k} p_{l;i,k}\widetilde T_{-i}^k z^l.
\]
Givental's Lagrangian cone $\mathcal L$ is then defined as the graph of the differential $d\mathcal F$. More precisely, a (formal) point in the Lagrangian cone can be explicitly written as
\[
-z+\bt(z)+\sum\limits_{\beta} \sum\limits_{n} \sum\limits_{i,k} \dfrac{q^\beta}{n!} I_\beta\Big(\dfrac{\widetilde T_{i,k}}{-z-\bar\psi},\underbrace{\bt(\bar\psi),\ldots,\bt(\bar\psi)}_{n}\Big) \widetilde T_{-i}^k.
\]

\begin{rmk}[I-functions]
One might ask whether we have I-functions and mirror theorems in this story. In fact, in view of Definition \ref{def:cycle}, if $X$ is toric and $D$ is torus invariant, we can simply write out the I-function for the corresponding $r$-th root stack, and then take a suitable limit for $r$ to fit the function into our formalism. Following Definition \ref{def:cycle}, the procedures are very straightforward and we believe it is unnecessary to spell it out. When $(X,D)$ is not a toric pair, a mirror theorem for the pair $(X,D)$ has recently been proved in \cite{FTY} using our formalism.
\end{rmk}

\subsection{Virasoro constraints} \label{sec:Virasoro}
In absolute Gromov--Witten theory, Virasoro constraints have a long history. Early works include \cites{EHX,EJX,LiuT}, with a lot of other works we are not able to fully recall. In this section, we follow \cite{Giv1} and describe Virasoro constraints as quantized operators. Virasoro constraints automatically hold in genus zero due to the properties of the Lagrangian cone. However, commutators of operators may not be well defined in our case (see Remark \ref{rmk:badnews}). 

Recall that an operator $A$ is called \emph{infinitesimal symplectic} if it satisfies $\Omega(A(f),g)+\Omega(f,A(g))=0$. Given a class $[\alpha]_i\in \HH$ such that if $i=0$, $\alpha\in H^{p,q}(X)$, and if $i\neq 0$, $\alpha\in H^{p,q}(D)$. Define two operators $\rho, \mu$ as the following.
\begin{align*}
    \begin{split}
        \rho([\alpha]_i) &= [\alpha\cup c_1(T_X(-\mathrm{log} D))]_i,\\
        \mu([\alpha]_i) &= \begin{cases}
        [(\dim_\C(X)/2-p)\alpha]_i, \text{ if } i\geq 0,\\
        [(\dim_\C(X)/2-p-1)\alpha]_i, \text{ if } i<0.\\
        \end{cases}
    \end{split}
\end{align*}
The extra $-1$ in the definition of $\mu$ on negative parts is in fact consistent with the heuristic view in Remark \ref{rmk:heuristic}. One should also compare this definition with \cite{JT}, since relative theory is considered as a limit of orbifold theory according to Definition \ref{def:cycle}.

Now, we can construct the following transformations.
\begin{align*}
    \begin{split}
        &l_{-1}=z^{-1},\\
        &l_{0}=zd/dz + 1/2 + \mu + \rho/z,\\
        &l_m=l_0(zl_0)^m.
    \end{split}
\end{align*}
One can check that they are infinitesimal symplectic. On the other hand, one can check that these operators satisfies the following (similar to \cite{Giv1}*{Virasoro constraints}). 
\[
    \{l_m,l_n\}=(n-m)l_{m+n},
\]
where $\{l_m,l_n\}=l_ml_n-l_nl_m$ is the Poisson bracket.

In general, an infinitesimal symplectic transformation $A$ gives rise to a vector field on $\mathcal H$ in the following way. Given a point $f\in \mathcal H$, the tangent space of $\mathcal H$ at $f$ can be naturally identified with $\mathcal H$ itself. By assigning a point $f$ the vector $A(f)\in T_f\mathcal H$, we obtain a tangent vector field on $\mathcal H$. The following proposition shares almost exactly the same proof as \cite{Giv1}*{Theorem 6}. Thus, we do not repeat its argument here.
\begin{prop}
The vector field generated by each $l_m$ is tangent to the Lagrangian cone $\mathcal L$.
\end{prop}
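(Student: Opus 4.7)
The plan is to follow the proof blueprint of \cite{Giv1}*{Theorem 6} essentially verbatim, since the argument is purely formal once the axioms (DE), (SE), (TRR) have been verified for $\mathcal F$. The Givental cone $\mathcal L$ is characterized by the property that $zT_f\mathcal L \subset T_f\mathcal L$ and $\mathcal L \cap T_f\mathcal L = zT_f\mathcal L$ for every $f \in \mathcal L$; it is precisely this cone property, an avatar of (TRR), that powers the induction on $m$.

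First I would verify tangency for $l_{-1}=z^{-1}$. Using the explicit parametrization of $\mathcal L$ displayed just before the proposition, the infinitesimal flow generated by $z^{-1}$ amounts to translating $\bt(z)$ by $[1]_0$, and this matches (SE) for the insertion of $[1]_0 = \widetilde T_{0,0}$. Next, for $l_0 = zd/dz + 1/2 + \mu + \rho/z$, the term $zd/dz$ corresponds to the dilaton flow on $\bt(z)$ (essentially (DE)), while $\mu + \rho/z$ is the grading/Euler operator. The key computation is that $\mu$ and $\rho$ are designed to make $\mathcal F$ homogeneous with the weight dictated by the virtual dimension of Proposition \ref{prop:vdim}: the class $c_1(T_X(-\log D))$ appearing in $\rho$ matches the logarithmic tangent class in the virtual dimension formula, and the extra shift by $-1$ on the negative sectors of $\mu$ accounts for the discrepancy between $\rho$ (all roots) and $\rho_+$ (positively weighted roots only) in that formula. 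A direct differentiation of the cone parametrization together with this homogeneity yields tangency of $l_0$.

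Third, I would propagate tangency to all $l_m$ for $m \geq 1$ via the recursion $l_m = l_0(zl_0)^m$. The property $zT_f\mathcal L \subset T_f\mathcal L$ ensures that $z$-multiplication preserves tangent directions, so inductively if $l_0$ is tangent then so is each $l_0(zl_0)^m$. The commutator relation $\{l_m,l_n\}=(n-m)l_{m+n}$ is then automatic from the explicit form of the operators and the Lie algebra structure of the $l_m$ as polynomial expressions in $l_0$ and $z$.

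The main obstacle is Step 2: checking that the modified operators $\mu$ and $\rho$ really are the correct grading operators, i.e.\ that the insertions of the form $\rho(\widetilde T_{i,k})z^{-1}$ and $\mu(\widetilde T_{i,k})$ conspire with the $zd/dz$ piece to produce the weight prescribed by the virtual dimension. The subtlety comes from the sector-dependent shift in $\mu$, which is forced by the ``heuristic'' Gysin identification of Remark \ref{rmk:heuristic} and corresponds precisely to the shifted age convention in the orbifold definition (Convention \ref{conv:Gamma}). Because $\HH$ is infinite-dimensional (see Remark \ref{rmk:badnews}), convergence of the various sums of $l_m$ acting on points of $\mathcal L$ must be tracked inside the completion \eqref{eqn:completion}; however, for each fixed $\beta$ and each fixed insertion profile, only finitely many terms survive the dimensional constraint of Proposition \ref{prop:vdim}, so the formal manipulations remain meaningful. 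This is also the point at which the argument fails to extend to higher genus, since commutators of quantized operators would require a well-defined trace on $\HH$, unavailable in infinite dimensions.
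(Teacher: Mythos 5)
Your proposal is correct and takes essentially the same route as the paper: the paper simply invokes that the statement "shares almost exactly the same proof as \cite{Giv1}*{Theorem 6}," i.e.\ the purely formal Givental argument deducing tangency of $l_{-1}$, $l_0$, and then $l_m=l_0(zl_0)^m$ from (SE), (DE), (TRR) and the cone property $zT_f\mathcal L\subset T_f\mathcal L$. Your elaboration of the $l_0$ step via the bigrading and the virtual-dimension formula, and of the bookkeeping in the completion, fills in exactly the details the paper leaves to the reader.
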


With this proposition, one can easily argue that each $l_m$ is associated with a Hamiltonian function $\Omega(l_m f,f)/2$ on $\mathcal L$. To fully parametrize this function, recall we have infinitely many variables $\{t_{l;i,k}\}$. To make sense of this Hamiltonian function, it needs to sit inside the completion $\C[\![\NE(X)]\!]\widehat{[\![\{t_{i,k}\}]\!]}$ defined in \eqref{eqn:completion}. We can similarly define the quantization of quadratic Hamiltonian according to the following rules.
\begin{align*}
    (q_{l;i,k}q_{l';i',k'})^{\wedge}&=q_{l;i,k}q_{l';i',k'}/\hbar, \\ (q_{l;i,k}p_{l';i',k'})^{\wedge}&=q_{l;i,k}\partial/\partial q_{l';i',k'}, \\ (p_{l;i,k}p_{l';i',k'})^{\wedge}&=\hbar \partial^2/\partial q_{l;i,k}\partial q_{l';i',k'}.
\end{align*}
Thus, we obtain a sequence of quantized operators $L_m=\widehat{l_m}$. Although we have not yet defined higher genus relative invariants with negative relative markings, we can still look at the restriction of their actions on genus-zero part. More precisely, we have the following proposition.
\begin{prop}\label{prop:genus0vir}
For $m\geq -1$, we have the equality
\[
    [e^{-\mathcal F(\bt)/\hbar} L_m e^{\mathcal F(\bt)/\hbar}]_{\hbar^{-1}}=0,
\]
where $[\cdot]_{\hbar^{-1}}$ means taking the $\hbar^{-1}$ coefficient.
\end{prop}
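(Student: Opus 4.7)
The plan is to deduce Proposition \ref{prop:genus0vir} from the tangency of the $l_m$-vector field to the Lagrangian cone $\mathcal{L}$ (established in the previous proposition) by identifying $[e^{-\mathcal{F}/\hbar} L_m e^{\mathcal{F}/\hbar}]_{\hbar^{-1}}$ with the classical Hamiltonian of $l_m$ restricted to $\mathcal{L}$. The argument is purely formal and parallels the standard Givental machinery (cf.\ \cite{Giv1}, \cite{Lproceeding}), once one verifies that every manipulation makes sense in the completion $\C[\![\NE(X)]\!]\widehat{[\![\{t_{i,k}\}]\!]}$.

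First I would extract the $\hbar^{-1}$-coefficient explicitly. Writing $l_m$ as a (possibly formally infinite) linear combination of quadratic monomials in the Darboux coordinates $(\bq,\bp)$, the quantization rules give, for each monomial,
\begin{align*}
[e^{-\mathcal{F}/\hbar}(q_{I}q_{J})^\wedge e^{\mathcal{F}/\hbar}]_{\hbar^{-1}} &= q_I q_J,\\
[e^{-\mathcal{F}/\hbar}(q_{I}p_{J})^\wedge e^{\mathcal{F}/\hbar}]_{\hbar^{-1}} &= q_I\,\partial_{q_J}\mathcal{F},\\
[e^{-\mathcal{F}/\hbar}(p_{I}p_{J})^\wedge e^{\mathcal{F}/\hbar}]_{\hbar^{-1}} &= \partial_{q_I}\mathcal{F}\cdot\partial_{q_J}\mathcal{F},
\end{align*}
where $I, J$ abbreviate multi-indices $(l;i,k)$. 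Assembling these contributions shows that $[e^{-\mathcal{F}/\hbar} L_m e^{\mathcal{F}/\hbar}]_{\hbar^{-1}}$ equals the quadratic Hamiltonian $H_{l_m}(f):=\tfrac12\Omega(l_m f,f)$ evaluated at the generic point $f \in \mathcal{L}$ of the form $-z+\bt(z)+\sum\ldots$ given at the end of Section \ref{sec:Givental}; this is exactly the substitution $p_J = \partial_{q_J}\mathcal{F}$ that cuts out the Lagrangian cone as the graph of $d\mathcal{F}$.

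Next I would show that $H_{l_m}$ vanishes identically on $\mathcal{L}$. For any $f\in\mathcal{L}$, the tangency proposition gives $l_m f \in T_f\mathcal{L}$. Because $\mathcal{L}$ is a cone in the dilaton-shifted coordinates, the Euler vector field is also tangent to $\mathcal{L}$, so $f \in T_f\mathcal{L}$. Since $T_f\mathcal{L}$ is Lagrangian for $\Omega$, we conclude $\Omega(l_m f, f)=0$, hence $H_{l_m}\vert_{\mathcal{L}}\equiv 0$. Combining this with the identification in the previous paragraph yields the proposition.

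The main obstacle, as hinted in Remark \ref{rmk:badnews}, is the infinite-dimensional nature of $\HH$: commutators $[L_m,L_n]$ are not \emph{a priori} well-defined on all of $\C[\![\NE(X)]\!]\widehat{[\![\{t_{i,k}\}]\!]}$, and the sums defining $l_m$ and its quantization range over infinitely many basis vectors. I would therefore check, for each fixed $\beta\in\NE(X)$ and each monomial $\prod t_{l;i,k}^{n_{l;i,k}}$ with only finitely many nonzero exponents, that its coefficient in the $\hbar^{-1}$-part involves only finitely many summands. This follows by a dimension count from Proposition \ref{prop:vdim} together with the tangency condition \eqref{eqn:tangcond}, exactly as in the analysis leading to the completion \eqref{eqn:completion}. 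Once this convergence is in place, the algebraic argument above reduces the statement to the formal finite-dimensional Givental picture and finishes the proof.
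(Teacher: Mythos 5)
Your proposal is correct and is exactly the standard Givental argument that the paper invokes in one sentence ("a standard conclusion from $l_m$ being infinitesimal symplectic and the induced vector field being tangent to $\mathcal L$"): you identify the $\hbar^{-1}$-coefficient with the quadratic Hamiltonian restricted to the graph of $d\mathcal F$, and then use tangency of $l_m f$ and $f$ to the Lagrangian tangent space $T_f\mathcal L$ to conclude $\Omega(l_m f,f)=0$. Your additional care about finiteness in the completion matches the paper's own discussion surrounding \eqref{eqn:completion} and Remark \ref{rmk:badnews}, so nothing is missing.
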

This is a standard conclusion from the fact that $l_m$ being infinitesimal symplectic and the induced tangent vector field is tangent to the Lagrangian cone $\mathcal L$.

\begin{ex}
Here we explicitly compute $L_{-1}$ and $L_0$.
\begin{align*}
    L_{-1} &= -\dfrac{\partial}{\partial t_{0;0,1}} + \sum\limits_{l,i,k} t_{l+1;i,k} \dfrac{\partial}{\partial t_{l;i,k}} + \dfrac{1}{2}(\mt_0,\mt_0),\\
    L_0 &= -\dfrac{1}{2}(3-\dim)\dfrac{\partial}{\partial t_{1;0,0}} + \sum\limits_{l,i,k} (-\mu(\widetilde T_{i,k}) + l + \dfrac{1}{2})t_{l;i,k}\dfrac{\partial}{\partial t_{l;i,k}} \\
    &- \sum\limits_{k} ([c_1(T)]_0,\widetilde T_{0}^k)\dfrac{\partial}{\partial t_{0;0,k}}
    + \sum\limits_{i,k,i',k'} ([c_1(T)]_0 \cdot \widetilde T_{i,k},\widetilde T_{i'}^{k'}) t_{l+1;i,k} \dfrac{\partial}{\partial t_{l;-i',k'}} \\
    &+ \sum\limits_{i,k,i',k'}\dfrac{1}{2\hbar} ([c_1(T)]_0 \cdot \widetilde T_{i,k},\widetilde T_{i',k'}) t_{0;i,k}t_{0;i',k'},
\end{align*}
where we simplify some notation and write $\dim=\dim(X)$, $T=T_X(-\mathrm{log}D)$. As usual, here $i,k$ and $i',k'$ range over all possible basis. It is easy to check that the action of $L_{-1}$ is equivalent to string equation, and the action of $L_{0}$ reflects the fact that virtual dimension equals the degree of insertions.
\end{ex}

Although we already have $\{l_m,l_n\}=(n-m)l_{m+n}$ for unquantized transformations, we still hope to have similar Virasoro conditions on operators $L_m$. However, we might not be able to even compose two quantized operators due to infinite sum. To limit the number of variables, we set $\bq_i=0$ for $|i|>N$ where $N$ is a fixed integer. Given two infinitesimal symplectic transformations $A$ and $B$, we have
\[
\widehat A \widehat B - \widehat B \widehat A = \{A,B\}^{\wedge}+\mathcal C(h_A, h_B),
\]
where $h_A, h_B$ are the quadratic Hamiltonians associated with $A$ and $B$. Similar to \cite{JT}*{Section 3.1}, one computes that
\[
\mathcal C(p_{l;i,k}p_{l';i',k'}, q_{l;i,k}q_{l';i',k'}) = -\mathcal C( q_{l;i,k}q_{l';i',k'},p_{l;i,k}p_{l';i',k'}) = 1+\delta_{l,l'}\delta_{i,i'}\delta_{k,k'},
\]
and $\mathcal C=0$ for other quadratic monomials. Now, for the commutator $\widehat A \widehat B - \widehat B \widehat A$ to be well defined, we have to require the limit of $\mathcal C(h_A,h_B)$ under $N\to \infty$ exists. However, we have a bad news discussed in the following remark.

\begin{rmk}\label{rmk:badnews}
The commutator $L_{-1}L_1-L_1L_{-1}$ is in fact not well defined! We leave the details to readers. In fact, \cites{JT,CGT} already describe that to fix Virasoro condition on commutators, we need to add $\text{str}(1/4-\mu^2)/4$ to $L_0$. One can already see that this constant does not make sense in the obvious way because $\HH$ is infinite dimensional. Adding constants do not have an impact on Proposition \ref{prop:genus0vir}. But it would start to influence higher-genus theory. This is the reason why we still can not conjecture Virasoro operators for all-genus relative theory.
\end{rmk}

\appendix
\section{A lemma on Hurwitz--Hodge cycles} \label{sec:lemma-HHI}
Recall that $P_{D_0,r}$ is the root stack of $\Pp_D(L\oplus \sO)$, and $\cD_0, \cD_\infty$ are two invariant substacks. $\cD_0$ is isomorphic to $\sqrt[r]{D/L}$ and $\cD_\infty$ is isomorphic to $D$.
In this section, we show a formula computing some cycles on
$\bM_{0,\vec{a},n}(\cD_0)$ (we call them Hurwitz--Hodge cycles). Although we still write the target as $\cD_0$ in order to match notation, the set-up is in fact independent of $P_{D_0,r}$ and the lemma works for root gerbes in general.

Let $\bM^\sim_{\Gamma}(D)$ be the moduli of relative stable maps to rubber
targets over $D$. Suppose that $\Gamma$ is a rubber graph of one vertex whose $0$-roots
have weights $\vec{\mu}=(\mu_1,\ldots,\mu_{\rho_0})$, and $\infty$-roots have weights
$\vec{\nu}=(-\nu_1,\ldots,-\nu_{\rho_\infty})$ (recall that our convention in Section \ref{sec:graph} is to label negative numbers on $\infty$-roots). Also suppose that there are $n$ legs. Define
a vector of ages to be
\[
\vec{a}=(
(r-\nu_1)/r,\ldots,(r-\nu_{\rho_\infty})/r,\mu_1/r,\ldots,\mu_{\rho_0}/r,\underbrace{0,\ldots,0}_n
 ).
\]
We have the following forgetful
maps
\begin{align*}
  \begin{split}
  \tau_1:&\bM_{0,\vec{a}}(\cD_0,\beta)\rightarrow \bM_{0,n+\rho_0+\rho_\infty}(D,\beta),\\ 
  \tau_2:&\bM^\sim_{\Gamma}(D)\rightarrow \bM_{0,n+\rho_0+\rho_\infty}(D,\beta).
  \end{split}
\end{align*}
Here under $\tau_2$, $\infty$-roots are identified as markings $1,\ldots,\rho_\infty$, $0$-roots as markings $\rho_\infty+1,\ldots,\rho_\infty+\rho_0$, and legs as markings $\rho_\infty+\rho_0+1,\ldots,\rho_\infty+\rho_0+n$. For $1\leq i\leq \rho_\infty$, write 
\begin{equation*}
p_i=\nu_i\tau_2^*\psi_{i}-\ev_{i}^*c_1(L),
\end{equation*}
and 
\[
\sigma_l=\sum\limits_{1\leq i_1<\ldots<i_l\leq\rho_\infty}p_{i_1}\ldots p_{i_l}.
\]
We set $\sigma_l=0$ if $l>\rho_{\infty}$.

Recall $\Psi_\infty$ is the divisor corresponding to the cotangent line bundle determined by the relative divisor on $\infty$ side (see Section \ref{sec:rel-neg-set-up}).
On the root gerbe $\cD_0$, there is a universal line bundle $L_r$. On
$\bM_{0,\vec{a}}(\cD_0,\beta)$, there is the universal curve $\pi:\mathcal
C\rightarrow \bM_{0,\vec{a}}(\cD_0,\beta)$ and a map $f:\mathcal C\rightarrow
\cD_0$. Recall $\mathcal L_r= f^*L_r$ and $-R^*\pi_*\mathcal L_r=R^1\pi_*\mathcal L_r-R^0\pi_*\mathcal L_r \in K^0(\bM_{0,\vec{a}}(\cD_0,\beta))$. 
\begin{lem}\label{lemma:HHI}
In the above notation, for any positive integer $k$ and $r\gg 1$, we have the following identity.
\begin{align*}
  \begin{split}
    &r^{k+1}(\tau_1)_*\left( c_k(-R^*\pi_*\mathcal L_r)\cap [\bM_{0,\vec{a}}(\cD_0)]^{\vir}\right) \\
    =&(\tau_2)_*\left( (\Psi_\infty^k-\Psi_\infty^{k-1}\sigma_1+\ldots+(-1)^{k}\sigma_k) \cap [\bM^\sim_{\Gamma}(D)]^{\vir}\right).
  \end{split}
\end{align*}
\end{lem}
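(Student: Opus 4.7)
The plan is to prove this identity via virtual $\C^*$-localization on the moduli $\bM_{\hat\Gamma}(P_{D_0,r},\cD_\infty)$ of orbifold relative stable maps to the local model $(P_{D_0,r},\cD_\infty)$, arranging the topological type $\hat\Gamma$ so that both sides of the lemma appear at a single $\C^*$-fixed locus and match by a coefficient-in-$t$ comparison of the localization formula.

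First, I would choose $\hat\Gamma=(0,n,\beta,\rho_0,\vec\mu,\rho_\infty,\vec\nu)$ combining the age data $\vec a$ at $\cD_0$ with $\rho_\infty$ relative contacts of orders $\nu_i$ at $\cD_\infty$. The fiber-scaling $\C^*$-action on $P=\Pp_D(L\oplus\sO)$ lifts to $P_{D_0,r}$, fixing $\cD_0$ and $\cD_\infty$. Focus on the localization graph $\fG_0$ with one orbifold vertex $v_0$ covering $\cD_0$ (carrying all markings with ages $\vec a$) connected by $\rho_\infty$ edges of multiplicities $\nu_i$ to a single rubber vertex $v_\infty$ over $\cD_\infty$. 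Its fixed locus is étale-covered by $\bM_{0,\vec a}(\cD_0,\beta)\times_{D^{\rho_\infty}}\bM^\sim_\Gamma(D)$, and following \eqref{loc-con-0} its equivariant contribution factors as the vertex term $\sum_{j\geq 0}(t/r)^{\rho_\infty-1-j}c_j(-R^*\pi_*\mathcal L_r)$, times the edge smoothing factors $\prod_i \nu_i/(t+\ev_i^*c_1(L)-\nu_i\bar\psi_i)$, times the rubber cotangent factor $1/(-t-\Psi_\infty)$.

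Next, I would extract the appropriate $t$-coefficient. Because $\tau_*[\bM_{\hat\Gamma}(P_{D_0,r},\cD_\infty)]^{\vir}$ is a non-equivariant cycle, its expression via localization must be polynomial in $t$ of degree bounded via Proposition \ref{prop:vdim}, so certain negative-$t$ coefficients cancel. Expanding $1/(-t-\Psi_\infty)=-\sum_j \Psi_\infty^j/t^{j+1}$ and each edge factor as a geometric series in $(\nu_i\bar\psi_i-\ev_i^*c_1(L))/t$, the combined product of rubber and edge factors contributes precisely the class $\Psi_\infty^k-\Psi_\infty^{k-1}\sigma_1+\ldots+(-1)^k\sigma_k$ at the relevant power of $1/t$. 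On the vertex side, isolating the same $t$-coefficient from $(t/r)^{\rho_\infty-1-j}c_j(-R^*\pi_*\mathcal L_r)$ picks out exactly $r^{k+1}c_k(-R^*\pi_*\mathcal L_r)$ after accounting for the explicit powers of $r$ in \eqref{loc-con-0}. Dividing through by the common factor $\prod_i\nu_i$ and pushing forward to $\bM_{0,n+\rho_0+\rho_\infty}(D,\beta)$ gives the asserted identity.

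The main obstacle will be to isolate the $\fG_0$ contribution cleanly from other $\C^*$-fixed localization graphs, and here the hypothesis $r\gg 1$ is essential. Graphs of greater combinatorial complexity (multiple orbifold vertices on $\cD_0$, edges split among several rubbers, etc.) contribute strictly higher powers of $1/r$ in the vertex term \eqref{loc-con-0}, due to the explicit $r^{\rho_-(i)-1-j}$ factors, so only $\fG_0$ survives to order $r^{k+1}$. Making this cancellation precise — either by induction on the number of orbifold vertices, or by a direct bound on the $r$-order of each competing graph — is the technical core of the argument. A secondary obstacle is verifying the polynomial-in-$t$ boundedness of the non-equivariant pushforward with precisely the right degree; this reduces to a virtual dimension count using that the large-age shifts in $\vec a$ are $r$-independent modulo $(r-\nu_i)/r=1-\nu_i/r$.
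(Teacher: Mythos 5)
Your approach has a structural gap that I do not think can be repaired. You localize the single space $\bM_{\hat\Gamma}(P_{D_0,r},\cD_\infty)$ and hope to extract the identity by isolating one graph and comparing $t$-coefficients. But in every fixed locus of that space the two sides of the lemma occur \emph{multiplied together}: the locus is a fiber product of orbifold moduli over $\cD_0$ (carrying the classes $c_j(-R^*\pi_*\mathcal L_r)$) with rubber moduli over $\cD_\infty$ (carrying the $\Psi$-classes). Polynomiality in $t$ of the non-equivariant pushforward yields relations among sums of such \emph{products} over all graphs; it never separates the Hurwitz--Hodge factor from the rubber factor, so it cannot produce an equality between a cycle supported on $\bM_{0,\vec{a}}(\cD_0)$ alone and a cycle supported on $\bM^\sim_\Gamma(D)$ alone. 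Moreover, the rubber factor you name is not the $\bM^\sim_\Gamma(D)$ of the statement: the rubber sitting over $\cD_\infty$ in your graph $\fG_0$ has $0$-roots given by the edge multiplicities and $\infty$-roots given by the contact orders $\vec{\eta}$ with $\cD_\infty$, whereas the lemma's $\Gamma$ has $0$-roots $\vec{\mu}$ and $\infty$-roots $-\vec{\nu}$ matching the orbifold age vector $\vec{a}$; that rubber arises over $D_0$ when one localizes the ordinary relative theory of $(P,D_0)$ --- a different moduli space from the one you localize. Note finally that Section \ref{sec:localmodel} performs exactly the localization you propose and \emph{invokes} Lemma \ref{lemma:HHI} to evaluate the vertex contributions over $\cD_0$; deriving the lemma from that same computation would be circular. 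Your $r$-power bookkeeping (``only $\fG_0$ survives to order $r^{k+1}$'') belongs to the proof of Theorems \ref{thm:limitexist} and \ref{thm:compare}, not here; for this lemma $r\gg1$ is only needed so that the ages in $\vec{a}$ are admissible.

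What is actually needed is a comparison of \emph{two} torus localizations. The paper's proof localizes both $\bM^{\text{orb}}(P_{D_0,r})$, whose residue at the $\cD_0$-vertex contains $\sum_j c_j(-R^*\pi_*\mathcal L_r)(t/r)^{\rho_\infty-1-j}$ together with the edge-smoothing factors $\prod_i r/(t-\nu_i\bar\psi_i+\ev_i^*c_1(L))$, and $\bM^{\rel}(P,D_0)$, whose residue at the rubber over $D_0$ is $\mathrm{Edge}/(t-\Psi_\infty)$. The two residues are identified by promoting the pushforward identities of \cite{ACW} for $\Psi_*$ and $\Phi_*$ to equivariant Chow groups via finite-dimensional approximations of $B\GM$; equality of the corresponding residues, after cancelling the common edge and pullback factors by the projection formula and matching coefficients of powers of $t$, is precisely the asserted identity. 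If you want a genuinely different route you would need some other mechanism that places the Hurwitz--Hodge classes and the rubber classes on opposite sides of an equation; a single localization of the hybrid space cannot do that.
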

\begin{proof}
The proof runs by an application of results in \cite{ACW}. We match our situation with \cite{ACW} by identifying $X_r=P_{D_0,r}$, $D_r=\cD_0$. We have the diagram (under the notation of \cite{ACW})
\begin{equation*}
    \xymatrix{
& \bM^{\rel}(P_{D_0,r},\cD_0) \ar[ld]_\Psi \ar[rd]^\Phi & \\
\bM^{\rel}(P,D_0) & & \bM^{\text{orb}}(P_{D_0,r}).
}
\end{equation*}
Here, $\bM^{\rel}(P,D_0)$ is the moduli of relative stable maps (corresponding to our $\bM_\Gamma(P,D_0)$), and $\bM^{\text{orb}}(P_{D_0,r})$ is the moduli of orbifold stable maps (corresponding to our $\bM_{\Gamma}(P_{D_0,r})$). But we have not introduced a notation for their $\bM^{\rel}(P_{D_0,r},\cD_0)$ in this paper. Since there is no notational conflict, we will stick to the notation of \cite{ACW} for this proof.

Recall the following results of \cite{ACW}.
\[
\Psi_*([\bM^{\rel}(P_{D_0,r},\cD_0)]^{\vir}) = [\bM^{\rel}(P,D_0)]^{\vir},
\]
\[
\Phi_*([\bM^{\rel}(P_{D_0,r},\cD_0)]^{\vir}) = [\bM^{\text{orb}}(P_{D_0,r})]^{\vir}.
\]
There is a $\GM$ action on $P_{D_0,r}$ which is compatible with the scaling action on the fiber of $P$. This induces actions on all three moduli spaces. Thus we have equivariant virtual cycles for all of them. We first prove the following analog in equivariant setting.
\begin{lem}
Under the induced $\GM$ actions, we have
\[
\Psi_*([\bM^{\rel}(P_{D_0,r},\cD_0)]^{\vir}_{\GM}) = [\bM^{\rel}(P,D_0)]^{\vir}_{\GM},
\]
and
\[
\Phi_*([\bM^{\rel}(P_{D_0,r},\cD_0)]^{\vir}_{\GM}) = [\bM^{\text{orb}}(P_{D_0,r})]^{\vir}_{\GM}.
\]
\end{lem}
\begin{proof}
Let us go down to the definition of equivariant Chow groups. According to \cite{EG}*{Section 2.2}, the $i$th equivariant Chow group of a space $X$ under an algebraic group $G$ can be defined as follows. Let $V$ be a $l$-dimensional representation of $G$ with $U\subset V$ an equivariant open set where $G$ acts freely and whose complement has codimension more than $\dim(X)-i$. Then define
\[
A^G_i(X)=A_{i+l-g}((X\times U)/G),
\]
where $\dim(G)=g$. In our case, $G=\GM$. To compute $A^{\C}_i(X)$, we can choose $V=\C^N$ where $N$ is sufficiently large with $\GM$ acting by scaling, and $U=\C^N-\{0\}$. Now $(X\times U)/\GM$ is an $X$-fibration over $U/\GM\cong \Pp^{N-1}$. If $X$ is projective, it is easy to see that $(X\times U)/\GM$ is also projective.

We apply this construction to our situation. Let us first consider $(P\times U)/\GM$. It is easy to see that $D_0$ induces a divisor 
\[
(D_0\times U)/\GM\subset (P\times U)/\GM.
\]
Since $D_0$ is smooth, this divisor is also smooth. Denote the projection 
\[
\pi:(P\times U)/\GM\rightarrow \Pp^{N-1}.
\]
If we consider the Gromov--Witten theory of $(P\times U)/\GM$ relative to $(D_0\times U)/\GM$, and choose the curve class $\beta$ such that $\pi_*\beta=0$, the moduli space can be realized as a fibration over $\Pp^{N-1}$ as well:
\begin{equation}\label{eqn:rel1}
\bM^{\rel}((P\times U)/\GM,(D_0\times U)/\GM) \cong (\bM^{\rel}(P,D_0) \times U)/\GM.
\end{equation}
There are natural perfect obstruction theories on both sides and they are identified under this isomorphism (this uses the curve being genus zero). Similarly, we have
\begin{equation}\label{eqn:rel2}
\bM^{\rel}((P_{D_0,r}\times U)/\GM,(\cD_0\times U)/\GM) \cong (\bM^{\rel}(P_{D_0,r},\cD_0) \times U)/\GM.
\end{equation}
Since the matching of virtual classes in \cite{ACW} works for all smooth projective pairs without extra conditions, we conclude the virtual classes of left-hand sides of \eqref{eqn:rel1} and \eqref{eqn:rel2} match under pushforward. On the other hand, if $N$ is sufficiently large, the Chow groups of the right-hand sides of \eqref{eqn:rel1} and \eqref{eqn:rel2} are isomorphic to equivariant Chow groups of corresponding moduli spaces. Under this identification, one can directly check the construction of virtual classes and conclude that for each of \eqref{eqn:rel1} and \eqref{eqn:rel2}, virtual class of the left-hand side is identified with equivariant virtual class of the corresponding moduli. This argument concludes that $\Psi_*$ preserves the equivariant virtual classes. The argument for $\Phi_*$ is similar.
\end{proof}
Now that the lemma is established, we can apply the virtual localization theorem to see that $\Psi_*$ and $\Phi_*$ identify each localization residue. There are stabilization maps from both moduli to the moduli of stable maps, and thus forming the following commutative diagram.
\begin{equation*}
\xymatrix{
& \bM^{\rel}(P_{D_0,r},\cD_0) \ar[ld]_\Psi \ar[rd]^\Phi & \\
\bM^{\rel}(P,D_0) \ar[rd]^{\bar\Psi} & & \bM^{\text{orb}}(P_{D_0,r}) \ar[ld]_{\bar\Phi}\\
& \bM_{0,n+\rho}(P,\beta).   & 
}
\end{equation*}
The fact that $\Psi_*$ and $\Phi_*$ identify localization residues implies that corresponding localization residues of $\bM^{\rel}(P,D_0)$ and $\bM^{\text{orb}}(P_{D_0,r})$ agree under the pushforward of $\bar\Psi$ and $\bar\Phi$, respectively.

To conclude Lemma \ref{lemma:HHI}, consider the localization residue of $\bM^{\rel}(P,D_0)$ corresponding to a vertex of class $\beta$ supporting on the rubber over $D_0$, with $\rho_\infty$ edges going out of it of degrees $\nu_1,\ldots,\nu_{\rho_\infty}$. We also put $n$ interior markings and $\rho_0$ relative markings of profile $(\mu_1,\ldots,\mu_{\rho_0})$. The residue is
\[
\dfrac{\text{Edge}}{t-\Psi_\infty} \cap [\bM^\sim_{\Gamma}(D)]^{\vir}
\]
where $\text{Edge}$ is the edge contribution, and $t$ is the equivariant parameter. It corresponds to a similar graph in $\bM^{\text{orb}}(P_{D_0,r})$. One can also compute its residue, which is the following.
\[
\left(\sum_{j\geq 0} c_j(-R^*\pi_*\mathcal L_r)(t/r)^{\rho_\infty-1-j}\right)\prod\limits_{i=1}^{\rho_\infty}\dfrac{r}{t-\nu_i\bar \psi_i+ev_i^*c_1(L)} \text{Edge} \cap [\bM_{0,\vec{a}}(\cD_0,\beta)]^{\vir}.
\]
It is important to note that the edge contribution in orbifold case is the same as the one in relative case. Now push both localization residues to the corresponding fixed component of $\bM_{0,n+\rho}(P,\beta)$ (under our setting, $\rho=\rho_0$). Note that $\prod\limits_{i=1}^{\rho_\infty}\dfrac{r}{t-\nu_i\bar \psi_i+ev_i^*c_1(L)}$ and the edge contribution are in fact pullback classes from the moduli space of stable map. One can use projection formula and invert these factors (then edge contributions are cancelled) to solve for Chern classes. Lemma \ref{lemma:HHI} follows by comparing each coefficient of monomials in $t$.
\end{proof}

\begin{rmk}\label{rmk:rubcycle}
In fact, if one takes $k=0$ and uses the result in \cite{JPPZ18}, one concludes that \[
(\tau_2)_*[\bM^\sim_\Gamma(D)]^{\vir}=[\bM_{0,n+\rho_0+\rho_\infty}(D,\beta)]^{\vir}.
\]
So, the genus-zero hypersurface theory already determines a large part of the genus-zero rubber theory. Plus some relations of psi-classes and boundary classes, it is possible to write a version of definition which uses moduli of stable maps to $D$ instead of rubber moduli over $D$.
\end{rmk}

\newpage
\bibliographystyle{amsxport}
\bibliography{universal-BIB}
\end{document}